\numberwithin{equation}{section}
\newtheorem{thm}{Theorem}[section]
\newtheorem{cor}[thm]{Corollary}
\newtheorem{prop}[thm]{Proposition}
\newtheorem{lem}[thm]{Lemma}
\newtheorem{conj}[thm]{Conjecture}
\theoremstyle{definition}
\newtheorem{defi}[thm]{Definition}
\newtheorem{example}[thm]{Example}
\newtheorem{rmk}[thm]{Remark}
\DeclareFontFamily{OT1}{rsfs}{}
\DeclareFontShape{OT1}{rsfs}{n}{it}{<-> rsfs10}{}
\DeclareMathAlphabet{\curly}{OT1}{rsfs}{n}{it}
 \newcommand{\C}{\mathbb{C}}
 \newcommand{\Z}{\mathbb{Z}}
\newcommand{\vir}{\mathrm{vir}}
\newcommand{\mov}{\mathrm{mov}}
 \newcommand{\Q}{\mathbb{Q}}
 \newcommand{\DT}{\mathop{\rm DT}\nolimits}
\newcommand{\PT}{\mathop{\rm PT}\nolimits}
\newcommand{\Exp}{ \mathrm{Exp} }
\newcommand{\TT}{\mathbf{T}}
\newcommand{\fix}{\mathrm{fix}}
\newcommand{\pt}{{\mathsf{pt}}}
\newcommand\mdot{{\scriptscriptstyle\bullet}}
   \DeclareMathOperator{\half}{\mathrm{half}}
\newcommand{\BC}{{\mathbb{C}}}
\newcommand{\BE}{{\mathbb{E}}}
\newcommand{\BF}{{\mathbb{F}}}
\newcommand{\BI}{{\mathbb{I}}}
\newcommand{\BQ}{{\mathbb{Q}}}
\newcommand{\BZ}{{\mathbb{Z}}}
\newcommand{\eE}{\mathcal{E}}
\newcommand{\fF}{\mathcal{F}}
\newcommand{\hH}{\mathcal{H}}
\newcommand{\iI}{\mathcal{I}}
\newcommand{\oO}{\mathcal{O}}
\newcommand{\uU}{\mathcal{U}}
\newcommand{\wW}{\mathcal{W}}
\newcommand{\xX}{\mathcal{X}}
\newcommand{\yY}{\mathcal{Y}}
\newcommand{\zZ}{\mathcal{Z}}
\newcommand{\Hom}{\mathop{\rm Hom}\nolimits}
\newcommand{\Hilb}{\mathop{\rm Hilb}\nolimits}
\newcommand{\ch}{\mathop{\rm ch}\nolimits}
\newcommand{\rk}{\mathop{\rm rk}\nolimits}
\newcommand{\td}{\mathop{\rm td}\nolimits}
\newcommand{\Ext}{\mathop{\rm Ext}\nolimits}
\newcommand{\rank}{\mathop{\rm rank}\nolimits}
\newcommand{\cneq}{\mathrel{\raise.095ex\hbox{:}\mkern-4.2mu=}}
\newcommand{\eqcn}{\mathrel{=\mkern-4.5mu\raise.095ex\hbox{:}}}
\newcommand{\HOM}{\mathop{{\hH}om}\nolimits}
\DeclareRobustCommand{\SkipTocEntry}[4]{}
\title[Crepant resolution conjecture on Calabi-Yau 4-folds]{A Donaldson-Thomas crepant resolution conjecture \\ on Calabi-Yau 4-folds}
\date{}
\author{Yalong Cao}
\address{RIKEN Interdisciplinary Theoretical and Mathematical Sciences Program (iTHEMS), 2-1, Hirosawa, Wako-shi, Saitama, 351-0198, Japan}
\email{yalong.cao@riken.jp}
\author{Martijn Kool}
\address{Mathematical Institute, Utrecht University, P.O.~Box 80010 3508 TA Utrecht, The Netherlands}
\email{m.kool1@uu.nl}
\author{Sergej Monavari}
\address{\'Ecole Polytechnique F\'ed\'erale de Lausanne (EPFL),  CH-1015 Lausanne, Switzerland}
\email{sergej.monavari@epfl.ch}
\begin{document}
\maketitle
\centerline{\emph{Dedicated to Professor Miles Reid on the occasion of his 75th birthday}}
\begin{abstract}
Let $G$ be a finite subgroup of $\mathrm{SU}(4)$ such that its elements have age at most one. In the first part of this paper, we define $K$-theoretic stable pair invariants on a crepant resolution of the affine quotient $\BC^4/G$, and conjecture a closed formula for their generating series in terms of the root system of $G$. In the second part, we define degree zero Donaldson-Thomas invariants of Calabi-Yau 4-orbifolds, develop a vertex formalism that computes the invariants in the toric case, and  conjecture closed formulae for their generating series for the quotient stacks $[\BC^4/\BZ_r]$, $[\BC^4/\BZ_2\times \BZ_2]$. Combining these two parts, we formulate a crepant resolution correspondence which relates the above two theories. 
\end{abstract}
\setcounter{tocdepth}{1}
\tableofcontents

\section{Introduction}

\subsection{Stable pairs on crepant resolutions}

Let $G < \mathrm{SU}(4)$ be a finite group acting on $\BC^4$ by matrix multiplication. In this paper we assume the elements of $G$ have age at most 1. Then, up to unitary conjugation, $G$ is a finite subgroup of $\mathrm{SO}(3) < \mathrm{SU}(3) < \mathrm{SU(4)}$ or $\mathrm{SU}(2) < \mathrm{SU}(4)$ (see Proposition \ref{classify g=1 abelian}). 
Here $\mathrm{SU}(3) < \mathrm{SU}(4)$ is the embedding induced by acting trivially on the fourth coordinate of $\BC^4$ and $\mathrm{SU}(2) < \mathrm{SU}(4)$ is the embedding induced by acting trivially on the third and fourth coordinate of $\BC^4$.
We consider the Nakamura $G$-Hilbert scheme $X := G\textrm{-Hilb}(\BC^4)$ \cite{Nak}. In the case $G < \mathrm{SO}(3)$, we have $X \cong G\textrm{-Hilb}(\BC^3) \times \BC$, where $G\textrm{-Hilb}(\BC^3)$ is irreducible and a crepant resolution of $\BC^3/G$ \cite{BKR}. In the case $G < \mathrm{SU}(2)$, we have $X \cong G\textrm{-Hilb}(\BC^2) \times \BC^2$, where $G\textrm{-Hilb}(\BC^2)$ is irreducible and a minimal crepant resolution of $\BC^2/G$ \cite{IN2}. So in either case we obtain a crepant resolution 
\begin{equation}\label{equ on cr intro}X = Y \times \BC \to \BC^4 / G = \C^3 / G \times \BC. \end{equation} 
By Reid's generalized McKay correspondence \eqref{equ on mckay} and our assumption on age, contracted \emph{proper} subvarieties in \eqref{equ on cr intro} have dimension 1.

Let $P:=P_n(X, \beta)$ be the moduli space of \emph{Pandharipande-Thomas stable pairs} \cite{PT} on $X$ parametrizing pairs $(F,s)$, where $F$ is a pure 1-dimensional sheaf on $X$ with proper support such that $[F]=\beta\in H_2(X, \BZ)$, $\chi(F)=n$, and the cokernel of $s \in H^0(X,F)$ is 0-dimensional. 
Denote by $A_*(P, \BZ\left[\tfrac{1}{2}\right])$ the Chow group of $P$ and by $K_0(P, \BZ\left[\tfrac{1}{2}\right])$ the Grothendieck group of coherent sheaves on $P$ both with $\BZ\left[\tfrac{1}{2}\right]$-coefficients.
By \cite{OT}, one can endow $P$ with a \emph{virtual class} and a \emph{twisted virtual structure sheaf}
\[[P]^{\vir}\in A_n(P, \BZ\left[\tfrac{1}{2}\right]), \quad \widehat{\oO}_P^{\vir}\in K_0(P, \BZ\left[\tfrac{1}{2}\right]), \]
which depend on a choice of orientation on $P$. Existence of orientations was established in \cite{CGJ, Bojko}. 
Stable pair invariants are defined by integrating suitable insertions in cohomology (resp.~$K$-theory) against the virtual class (resp.~twisted virtual structure sheaf).
For a line bundle $L$ on $X$, we consider the \emph{tautological complex}:
\[
L^{[n]}:=\mathbf{R}\pi_{P*}(\pi_X^* L \otimes \mathbb{F}),
\]
where $(\oO\to \BF)$ is the universal stable pair on $X \times P$, and $\pi_X, \pi_{P}$ are the natural projections. 
The Calabi-Yau 4-fold $X = G\textrm{-Hilb}(\BC^4)$ is not proper, but it has an action by an algebraic torus $\TT$ preserving the Calabi-Yau volume form. Moreover, the fixed locus $P^{\TT} \subset P$ is proper (cf.~\S \ref{sect on Nak G Hilb}).
In \cite{CKM}, we defined the following $K$-theoretic invariants
\[P_{n,\beta}(X,L, y):=\chi\left(P, \widehat{\oO}_P^{\vir}\otimes \widehat{\Lambda}^{\mdot} (L^{[n]} \otimes y^{-1})\right)\in K_0^{\TT}(\pt)_{\mathrm{loc}}(y^{\frac{1}{2}}),\]
where $\widehat{\Lambda}^\bullet(\cdot):=\Lambda^{\bullet}(\cdot)\otimes \det(\cdot)^{-1/2}$, $y$ is a formal variable, and $K_0^{\TT}(\pt)_{\mathrm{loc}}$ is the localized $\TT$-equivariant $K$-group of a point \eqref{equ for k0 loc}. Since the moduli space $P$ is not proper, these invariants are defined by equivariant localization (cf.~\S\ref{sec: virtual localization}). 

We form the $K$-theoretic Pandharipande-Thomas (PT) stable pair \emph{partition function} by
\begin{align*}
\mathcal{Z}^{\PT}_{X,L}(y,Q,q)&:=1+\sum_{\beta>0,n\in \mathbb{Z}}P_{n,\beta}(X,L, y)\,Q^{\beta}q^n\in K_0^{\TT}(\pt)_{\mathrm{loc}}(y^{\frac{1}{2}})(\!(q,Q)\!), \end{align*}
where $Q$ is multi-index variable with respect to some basis of $H_2(X,\Z)$ and the sum runs over all non-zero effective curve classes $\beta$ and integers $n$. Our first conjecture is a closed expression for the above generating series. In the following conjecture, $t_4$ is the equivariant parameter of the $\BC^*$-action on the fibre $\BC$ of $X = Y \times \BC$, $[x] := x^{\frac{1}{2}} - x^{-\frac{1}{2}}$ and $\mathrm{Exp}(\cdot)$ denotes the plethystic exponential \eqref{equ on ple}.
\begin{conj}[Conjecture \ref{conj: PT of G non-abelian}]\label{conj: stable pairs intro}
Let $G< \mathrm{SU}(4)$ be a finite subgroup with elements of age at most 1. Let $X\to \BC^4/G$ be the crepant resolution given by the Nakamura $G$-Hilbert scheme. 
Then there exist orientations such that
\[
\mathcal{Z}^{\PT}_{X, \oO_X}(y,Q,q)= \mathrm{Exp}\Bigg( \sum_{\beta \in H_2(X,\mathbb{Z})}\frac{-P_{1, \beta}(X,\oO_X,t_4) [y]\, Q^{\beta}}{[t_4][y^{\frac{1}{2}} q] [y^{\frac{1}{2}} q^{-1}]} \Bigg).
\]
\end{conj}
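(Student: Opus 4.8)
\medskip
\noindent\textbf{Proof strategy.} The plan is to prove the identity by $\TT$-equivariant localization, to reduce the computation to the compact toric curves of $X$ via the product structure $X = Y\times\BC$, and then to recognize the resulting generating series as the stated plethystic exponential. By the classification Proposition \ref{classify g=1 abelian}, $G$ is conjugate into $\mathrm{SO}(3)$ or $\mathrm{SU}(2)$, so that $X$ is $G\textrm{-Hilb}(\BC^3)\times\BC$ or $G\textrm{-Hilb}(\BC^2)\times\BC^2$; when $G$ is abelian $X$ is toric and both sides are computed by their $\TT$-fixed loci, while the non-abelian $\mathrm{SO}(3)$ cases are organized by the McKay/root-system symmetry already used to phrase the conjecture. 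I would first set up the Oh--Thomas localization so that each $P_{n,\beta}(X,\oO_X,y)$ is a sum over $\TT$-fixed stable pairs; since contracted proper subvarieties of \eqref{equ on cr intro} are $1$-dimensional, every such fixed pair is supported on the exceptional $\BP^1$-chains of $Y\to\BC^3/G$ (times the origin of the transverse $\BC$), decorated with embedded points.

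The structural heart is a $K$-theoretic dimensional reduction exploiting $X = Y\times\BC$ with $\TT$ preserving the Calabi--Yau volume form and acting on the last factor with weight $t_4$. I would show that the transverse $\BC$-direction contributes the universal denominator factor $[t_4]^{-1}$, and that under this reduction the $K$-theoretic insertion variable $y$ becomes identified with $t_4$; this is precisely what produces the specialization $P_{1,\beta}(X,\oO_X,t_4)$ on the right-hand side. After removing $[t_4]^{-1}$, the remaining content is a stable-pair series on the Calabi--Yau $3$-fold $Y$, a crepant resolution of $\BC^3/G$ with only $1$-dimensional contracted loci. On such a $Y$ the reduced series in each class is governed by genus-$0$ Gopakumar--Vafa invariants, and the full series is the plethystic exponential of the primitive single-curve contributions (the local $\BP^1$, resp.\ ADE-chain, computation). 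I would identify the genus-$0$ GV number of class $\beta$ with the leading invariant $P_{1,\beta}$, check that the universal single-curve $q$-series is $[y]\big/\big([y^{\frac12}q][y^{\frac12}q^{-1}]\big)$ (its $q$-expansion counting thickenings by points along the curve), and assemble everything through $\Exp$.

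The principal obstacle is genuinely four-dimensional and has no counterpart in the $3$-fold theory: the Oh--Thomas square-root Euler class and the half-determinant twists $\det(\cdot)^{-1/2}$ in $\widehat{\Lambda}^{\bullet}$ depend on a choice of orientation and introduce signs at each fixed locus. One must exhibit a single global orientation for which all local signs align, so that the plethystic logarithm of the partition function reduces to the displayed single-curve sum with the sign $-P_{1,\beta}$. Making the $K$-theoretic reduction rigorous---tracking the square roots and the insertion $\widehat{\Lambda}^{\bullet}(\oO_X^{[n]}\otimes y^{-1})$ across the passage from $X$ to $Y$---is the crux, and the $\mathrm{SU}(2)$ case (where $Y=G\textrm{-Hilb}(\BC^2)\times\BC$ forces a two-step reduction) needs separate care.

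As concrete evidence, and to fix the orientation signs, I would verify the formula by direct application of the vertex formalism in the toric cases $[\BC^4/\BZ_r]$ and $[\BC^4/\BZ_2\times\BZ_2]$, to low order in $Q$ and to several orders in $q$. A complete proof would then require a uniform edge/vertex rigidity statement---that for every effective class $\beta$ the dependence on $q$ is governed by the single rational function above, independently of $\beta$---which is the remaining gap between the verified cases and the general conjecture.
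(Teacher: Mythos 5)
The statement you were given is Conjecture~\ref{conj: PT of G non-abelian}; the paper does not prove it, and neither does your proposal --- as you yourself concede in your final sentence. With that understood, your plan of attack (Oh--Thomas localization, reduction of the support to the exceptional curves via properness of the fixed locus, explicit computation for $n=0,1$, and vertex-formalism checks for abelian $G$) matches the evidence the paper actually assembles: Proposition~\ref{prop:properfixloc} gives properness of $P_n(X,\beta)^{\TT}$; Lemma~\ref{lemma: fixed locus stable pairs} shows $P_0(X,\beta)^{\TT}=\varnothing$ and that $P_1(X,\beta)^{\TT}$ is the single pair $(\oO_X\to\oO_C)$ when $\beta$ corresponds to a positive root (and is empty otherwise); Lemmas~\ref{lem:compareExts} and~\ref{lemma: Ext of fourfolds and 3-folds} compute the relevant weight decompositions; Proposition~\ref{cor:n=0,1} settles $n=0,1$; and Propositions~\ref{prop: vertex formalism Z_r}, \ref{prop: vertex formalism Z_2 Z_2} supply the low-order toric verifications.

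Two points in what you call the ``structural heart'' would not survive scrutiny. First, you propose to identify the insertion variable $y$ with $t_4$ as part of a dimensional reduction and claim that this ``produces the specialization $P_{1,\beta}(X,\oO_X,t_4)$ on the right-hand side.'' In the paper the specialization $y=t_4$ (\S\ref{sec: dim red}) is an \emph{a posteriori} consistency check, not a derivation: setting $y=t_4$ destroys exactly the $y$-dependence $[y]/([y^{\frac12}q][y^{\frac12}q^{-1}])$ that the conjecture is about. The combination $P_{1,\beta}(X,\oO_X,t_4)\,[y]/[t_4]$ arises instead because $\oO_X^{[1]}|_{I_C}\cong\BC$ is a trivial line (Eqn.~\eqref{eqn:tautrestr}), so the $n=1$ invariant factors as a $y$-independent quantity times $[y]$, and dividing by $[t_4]$ merely renormalizes; see the proof of Proposition~\ref{cor:n=0,1}. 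Second, your assertion that the full series is the plethystic exponential of primitive single-curve contributions with the universal $q$-profile $[y]/([y^{\frac12}q][y^{\frac12}q^{-1}])$ \emph{is} the conjecture; it does not follow from genus-zero Gopakumar--Vafa theory of the $3$-fold, and the ``uniform edge/vertex rigidity statement'' you invoke is precisely the missing ingredient --- which is why, even after the $n=0,1$ cases and the toric checks, the statement remains conjectural in the paper.
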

The above conjecture states that the partition function is controlled simply by the invariants $P_{1,\beta}(X, \oO_X, t_4)$, which could be thought as the \emph{$K$-theoretic Gopakumar-Vafa invariants} of $X$. 

If $G<\mathrm{SU}(2)$, then $X=S\times \BC^2$, with $S\to \BC^2/G$ the minimal resolution of an ADE singularity, and we say that a curve class $\beta$ corresponds to a \emph{positive root} if it corresponds to a positive root of the root system associated to Dynkin diagram of $G$ (cf.~Definition~\ref{def: positive roots}). 

If $G<\mathrm{SO}(3)$, then $X=Y\times \BC$, with $Y\to \BC^3/G$ the crepant resolution given by the Nakamura $G$-Hilbert scheme. 
Denote the double cover of $G$ by $\widehat{G}<\mathrm{SU}(2)$. By the work of Boissi\`ere-Sarti \cite{BS}, $Y$ admits a  fibration $Y\to \BC$ with central fiber $S_0$ a partial resolution of the ADE singularity $\BC^2 / \widehat{G}$ (cf.~\S \ref{sec: BS}). In other words, there is a map $S\to S_0\hookrightarrow Y$ which contracts some of the irreducible components of the exceptional locus of the minimal resolution $S\to \BC^2/\widehat{G}$. In this case, we say that a curve class $\beta$ in $Y$ corresponds to a \emph{positive root} if it is the image of a curve class in $S$ corresponding to a positive root.

To support Conjecture \ref{conj: stable pairs intro}, we first calculate $P_{n,\beta}(X, \oO_X, y)$ for $n=0,1$ and show the conjecture holds in these cases (Proposition \ref{cor:n=0,1}). As the invariants are defined by localization, we determine the torus fixed locus for $n=0,1$, and prove that it is non-empty only if $\beta$ corresponds to a positive root, in which case the unique fixed point corresponds to the pair $(\oO_X\to \oO_C)$, where $C$ is the Cohen-Macaulay curve in class $\beta$. The stable pair invariants are then computed by analyzing the weight decomposition of the deformation and obstruction spaces at the fixed points. For subgroups $G<\mathrm{SU}(2)$, an explicit weight decomposition is computed (cf.~Lemma \ref{lem:compareExts}). For subgroups  $G<\mathrm{SO}(3)$, the analysis is reduced to the one already carried out by Bryan-Gholampour \cite{BG2} (cf.~Lemma \ref{lemma: Ext of fourfolds and 3-folds}). 

Finally, in the special case when $G$ is abelian, which means that $G\cong \BZ_r$ or $\BZ_2\times \BZ_2$, we verify the conjecture in several cases by implementing the vertex/edge formalism, developed in \cite{CKM}, into a Maple/Mathematica program (see Propositions \ref{prop: vertex formalism Z_r}, \ref{prop: vertex formalism Z_2 Z_2} for details).

\subsection{Donaldson-Thomas invariants of Calabi-Yau 4-orbifolds}

In \S \ref{sec: CY 4 orbifolds}, we study the (degree 0) Donaldson-Thomas partition function of a Calabi-Yau 4-orbifold $\xX$. In this introduction, we discuss the case when $\xX$ is a global quotient stack $[\BC^4 / G]$, where $G$ is a finite \emph{abelian} subgroup of $\mathrm{SU}(4)$ (with no age restriction). Up to unitary conjugation, $G$ can be taken to be a subgroup of $(\mathbb{C}^*)^4 \cap \mathrm{SU}(4)$, where $(\mathbb{C}^*)^4 < \mathrm{SU}(4)$ is the diagonal subgroup. Then the action of $(\mathbb{C}^*)^4$ descends to $[\BC^4 / G]$, which is \emph{toric}.

For a $G$-representation $R$,  consider the  moduli space given at the level of $\BC$-points by
 \begin{align*}
   \Hilb^R([\BC^4/G])= \Big\{W\in\Hilb^{\dim R}(\mathbb{C}^4)\,|\,W \subset \mathbb{C}^4\textrm{ }\textrm{is 0-dim.}\textrm{ }G\textrm{-}\textrm{invariant}
\textrm{ }\textrm{and}\textrm{ }H^0(W,\oO_W)\cong R\Big\},  \nonumber
\end{align*}
where $H^0(W,\oO_W)\cong R$ is an isomorphism of $G$-representations. 
This is a special instance of the moduli space of substacks introduced by Olsson-Starr \cite{OS}.  As before, the moduli space $I:=\Hilb^R([\BC^4/G])$ (which is in fact a scheme) is naturally endowed with a twisted virtual fundamental sheaf 
 \[
 \widehat{\oO}_I^{\vir} \in\, K_0\left(I, \mathbb{Z}\left[\tfrac{1}{2}\right]\right), 
 \]
 which depends on a choice of orientation.
 Let  $L$ be a line bundle on $[\BC^4/G]$, in other words a $G$-equivariant line bundle on $\BC^4$. We define the \emph{tautological vector bundle}
\begin{equation*} 
L^{[R]} := \pi_{I *} (\pi_{[\BC^4/G]}^* L \otimes \oO_{\mathcal{Z}})
\end{equation*}
on  $I$,
where $ \mathcal{Z}\subset [\BC^4/G]\times I$ is the universal closed substack and $\pi_{[\BC^4/G]}, \pi_I$ are the natural projections. The fibre over a $\BC$-point  
$\wW\in I$ corresponding  to the $G$-invariant subscheme $W\subset \BC^4$ is $L^{[R]}|_{\wW}=H^0(W, \oO_W)^G$.
We introduce the following \emph{orbifold} $K$-theoretic Donaldson-Thomas (DT) invariants
\[I_{R}([\BC^4/G],L, y):=\chi\left(I, \widehat{\oO}_I^{\vir}\otimes \widehat{\Lambda}^{\mdot} (L^{[R]} \otimes y^{-1})\right)\in K_0^{\TT}(\pt)_{\mathrm{loc}}(y^{ \frac{1}{2}}),\]
 which are again defined by equivariant localization on the proper fixed locus. 
 
 We denote the corresponding Donaldson-Thomas (DT) \emph{partition function}  by
\begin{align*}
\mathcal{Z}^{\DT}_{[\BC^4/G],L}(y,q)&:=\sum_{R}I_{R}([\BC^4/G],L, y)q^R\in K_0^{\TT}(\pt)_{\mathrm{loc}}(y^{\frac{1}{2}})(\!(q)\!),
 \end{align*}
where  $q$ is a multi-index variable and the sum runs over all possible (isomorphism classes of) $G$-representations $R$.

For any finite abelian  subgroup $G<\mathrm{SU}(4)$, regardless of the age of its elements, the $(\mathbb{C}^*)^4$-fixed locus of $\Hilb^R([\BC^4/G])$ is 0-dimensional reduced (Lemma \ref{lemma: fixed locus stack}) and can be indexed by \emph{$G$-coloured solid partitions}.
We develop in \S \ref{sect on orb vertex} an \emph{orbifold vertex formalism} that computes the above $K$-theoretic invariants, which can be seen purely combinatorially as a refined counting-measure of coloured solid partitions.

We propose explicit conjectural formulae for the above $K$-theoretic partition functions for the groups $\BZ_r<\mathrm{SU}(2)$ and $\BZ_2\times \BZ_2< \mathrm{SO}(3)$ 
(Conjectures \ref{conj: closed formula orbifold vertex}, \ref{conj: closed formula orbifold vertex D4}), which generalizes Nekrasov's celebrated formula for $\BC^4$ \cite{Nek}.
By implementing the orbifold vertex formalism into Mathematica, we provide a proof of these conjectures in some examples (cf.~Proposition \ref{prop: r=1}).

 \subsection{Crepant resolution correspondence}
 
 Inspired by string theory \cite{DHVW1, DHVW2},  Bryan-Graber \cite{BGr} proposed a \emph{crepant resolution correspondence} in Gromov-Witten theory (see also the work of Ruan \cite{Ruan}), which relates the Gromov-Witten invariants of an orbifold to the Gromov-Witten invariants of a crepant resolution of its singularities (if it exists). 
  By the conjecture of Maulik-Nekrasov-Okounkov-Pandharipande \cite{MNOP1}, the Gromov-Witten crepant resolution correspondence for Calabi-Yau 3-folds was transferred to the Donaldson-Thomas side by Young and Bryan-Cadman-Young \cite{Young, BCY}. The resulting Donaldson-Thomas crepant resolution correspondence was later proved by Calabrese, Toda, and Beentjes-Calabrese-Rennemo \cite{Cal, Toda, BCR}. 
  More recently, Liu \cite{Liu} addressed the 3-fold crepant resolution conjecture for $K$-theoretic invariants using the language of quasimaps.

Motivated by its 3-fold analogue, 
we formulate a first instance of a $K$-theoretic crepant resolution conjecture for Donaldson-Thomas theory of \emph{Calabi-Yau 4-folds}. For a finite abelian group $G$, we denote its irreducible representations by $R_0, \ldots, R_{|G|-1}$.
\begin{conj}[Conjecture \ref{conj: crepant resolution for CY4}]\label{conj: CRC intro}
Let $G=\BZ_r<\mathrm{SU}(2) < \mathrm{SU(4)}$ or $\BZ_2\times \BZ_2<\mathrm{SO}(3) < \mathrm{SU(4)}$.
Denote by $X \to \BC^4/G$ the crepant resolution given by the Nakamura $G$-Hilbert scheme and let $\xX = [\BC^4 / G]$. Then there exist orientations such that
\begin{align*}
    \mathcal{Z}^{\DT}_{\xX,\oO_{\xX}}(y,q_0,\ldots,q_{|G|-1})=\mathcal{Z}^{\DT}_{X,\oO_X}(y,0,q)\cdot \mathcal{Z}^{\PT}_{X,\oO_X}(y,Q,q)\cdot \mathcal{Z}^{\PT}_{X,\oO_X}(y,Q^{-1},q),
\end{align*}
under the change of variables $Q^{\beta_i}=q_i$ for $i=1,\dots, |G|-1$, $q=q_0\cdots q_{|G|-1}$, where $\beta_i$ is the curve class corresponding to the irreducible $G$-representation $R_i$ by Reid's generalized McKay correspondence \eqref{equ on mckay}.
\end{conj}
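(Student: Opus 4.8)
The plan is to prove the identity by $\TT$-equivariant localization, reducing both sides to explicit $K$-theoretic combinatorial generating series, and then establishing the required factorization. First I would invoke the orbifold vertex formalism of \S\ref{sect on orb vertex}, together with the ordinary vertex/edge formalism, to write each of the three partition functions as a sum over isolated torus-fixed points. Since $G$ is abelian and $[\BC^4/G]$ is toric, the fixed locus of $\Hilb^R([\BC^4/G])$ is $0$-dimensional and reduced (Lemma~\ref{lemma: fixed locus stack}), indexed by $G$-colored solid partitions; dually, the fixed loci controlling $\mathcal{Z}^{\DT}_{X,\oO_X}$ and $\mathcal{Z}^{\PT}_{X,\oO_X}$ are indexed by solid partitions placed at the torus-fixed points of the toric resolution $X$, together with leg/curve data along the torus-invariant exceptional $\PP^1$'s.

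The geometric heart of the argument is a decomposition of a $G$-colored solid partition dictated by the toric geometry of $X \to \BC^4/G$. Crucially, the McKay/BKR-type derived equivalence does not preserve dimension of support, so a finite $G$-cluster on the orbifold corresponds to an object on $X$ whose support may be $1$-dimensional along the exceptional curves. Using Reid's generalized McKay correspondence \eqref{equ on mckay}, which attaches to each nontrivial $R_i$ an exceptional class $\beta_i$, I would show that every $G$-colored solid partition splits canonically into (i) a finite bulk concentrated near the torus-fixed points of $X$, whose generating series reassembles into the degree-zero series $\mathcal{Z}^{\DT}_{X,\oO_X}(y,0,q)$, and (ii) two families of asymptotic legs running along the exceptional curves in the two possible orientations. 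The appearance of both $Q$ and $Q^{-1}$ is precisely the contribution of these two orientations, matching the split of the root system of $G$ into positive and negative roots (cf.~Definition~\ref{def: positive roots}); note that the right-hand side is \emph{not} the full Donaldson-Thomas series of $X$, and this symmetrization is the genuinely new $4$-fold phenomenon to be explained.

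The next step is to match $K$-theoretic weights. Each fixed point contributes $\widehat{\Lambda}^\mdot$ of a tautological object twisted against $\widehat{\oO}^{\vir}$; under the substitution $Q^{\beta_i} = q_i$, $q = q_0 \cdots q_{|G|-1}$, the weight of the orbifold vertex at a partition $\pi$ must equal the product of the DT-vertex weight of its bulk and the two PT edge weights of its legs. This is the step I expect to be the main obstacle: the Oh-Thomas virtual structure involves a square root of a determinant line, so the identity holds only after a globally consistent choice of orientations on all the moduli spaces involved, and propagating these signs through the factorization is delicate. Already the $\beta = 0$ specialization reduces to Nekrasov's conjecture for $\BC^4$, which is itself known only in examples, so a closed-form proof of the weight identity is presently out of reach.

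Lacking such a closed-form proof of the underlying $K$-theoretic vertex identities, I would finally verify the correspondence to low order in the $q_i$ for $G = \BZ_r$ and $G = \BZ_2 \times \BZ_2$ by implementing both the orbifold and the ordinary vertex formalisms in Mathematica and comparing the two sides term by term. This simultaneously pins down the correct orientations making the identity hold and supplies the computational evidence supporting the conjecture.
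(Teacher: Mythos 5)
The statement you are addressing is a \emph{conjecture}; the paper does not prove it, and its only supporting result is Theorem \ref{thm on crc}, which is a purely formal consistency check: \emph{assuming} the conjectural closed formula for the stable pair series (Conjecture \ref{conj: PT of G non-abelian}) and the conjectural closed formulae for the orbifold series (Conjectures \ref{conj: closed formula orbifold vertex}, \ref{conj: closed formula orbifold vertex D4}), one verifies that the two sides agree as identities of plethystic exponentials of explicit rational functions under the stated change of variables, with the degree-zero factor $\mathcal{Z}^{\DT}_{X,\oO_X}(y,0,q)$ handled by localization to toric charts and the $r=1$ Nekrasov formula. Your proposal instead aims at a direct geometric/combinatorial proof in the style of Bryan--Cadman--Young, decomposing a $G$-coloured solid partition into a bulk near the fixed points of $X$ and asymptotic legs along the exceptional curves. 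That is a reasonable long-term strategy (it is how the $3$-fold statement was first formulated), but no such decomposition is established in the paper or in your sketch, and even in three dimensions the corresponding identity ultimately required wall-crossing in the derived category rather than a bijective argument; so as written this is a programme, not a proof, which you do acknowledge.

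The concrete gap is your final step. You propose to ``verify the correspondence to low order in the $q_i$'' by comparing both sides term by term. This cannot work, and the paper says so explicitly: because of the inversion $Q\mapsto Q^{-1}$ in the factor $\mathcal{Z}^{\PT}_{X,\oO_X}(y,Q^{-1},q)$, a fixed monomial $q_0^{a_0}\cdots q_{|G|-1}^{a_{|G|-1}}$ on the right-hand side receives contributions from infinitely many pairs $(n,\beta)$ (large $n$ together with large $\beta$ can produce the same monomial after the substitution $Q^{\beta_i}=q_i$, $q=q_0\cdots q_{|G|-1}$). The right-hand side is therefore not a well-defined formal power series coefficient by coefficient; it only makes sense after one first resums $\mathcal{Z}^{\PT}_{X,\oO_X}(y,Q,q)$ into the plethystic exponential of a rational function of $Q$ (which is exactly what Conjecture \ref{conj: PT of G non-abelian} together with Lemma \ref{lemma: fixed locus stable pairs} provides), and then substitutes $Q\mapsto Q^{-1}$ into that rational function. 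Any computational evidence must therefore be gathered for the two input conjectures separately (as the paper does in Propositions \ref{prop: vertex formalism Z_r}, \ref{prop: vertex formalism Z_2 Z_2}, \ref{prop: r=1}), with the crepant resolution identity itself checked only at the level of the closed formulae.
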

We note that Conjecture \ref{conj: CRC intro} cannot be checked order by order, due to the inversion of the curve-counting parameter $Q\mapsto Q^{-1}$ on the right-hand side. However, we show that the conjectural expression for stable pair invariants of the crepant resolution of $\BC^4/G$ (Conjecture \ref{conj: stable pairs intro}) matches with the conjectural expression of orbifold Donaldson-Thomas invariants of the quotient stack $[\BC^4/G]$ (Conjectures \ref{conj: closed formula orbifold vertex}, \ref{conj: closed formula orbifold vertex D4}) via the above crepant resolution correspondence (Theorem \ref{thm on crc}).

To make a precise formulation of the crepant resolution conjecture  in our local setting, we need to restrict to  (1) abelian groups, otherwise the left-hand side cannot be defined by equivariant localization, and (2) to groups with elements of age at most one to prevent the crepant resolution to contract proper subvarieties of dimension larger than one. However, we explain in Remark \ref{rmk: extending CRC global} the necessary ingredients required to generalize Conjecture \ref{conj: CRC intro} to more general (possibly projective) Calabi-Yau 4-orbifolds.
We hope to address the more general case in a future exploration.

\subsection{Relation to string theory}

Donaldson-Thomas invariants of $\BC^4$ and their $K$-theoretic refinement appear in string theory as the result of supersymmetric localization of $U(1)$ super-Yang-Mills theory with matter on $\BC^4$. This theory was introduced by Nekrasov-Piazzalunga \cite{Nek, NP} using an ADHM-type construction describing the quantum mechanics of a system of D0-D8 branes. It was subsequently extended to orbifolds $[\BC^4/G]$ by Bonelli-Fasola-Tanzini-Zenkevich \cite{BFTZ} (see also \cite{Ki}). 
In three dimensions, Cirafici \cite{Cir} recently interpreted the generating function of $K$-theoretic Donaldson-Thomas invariants of Calabi-Yau 3-orbifolds  as the M2-brane index in M-theory (following the ideas of Nekrasov-Okounkov \cite{NO}).

\subsubsection{Szabo-Tirelli} 

Our present work is complementary to, but independent of, recent papers by Szabo-Tirelli  \cite{ST1, ST2}, which address the  study of \emph{higher rank} cohomological Donaldson-Thomas theory on a toric Calabi-Yau 4-orbifold arising as  the quotient of $\mathbb{C}^4$ by a finite abelian subgroup $G$ of $\mathrm{SU}(4)$. Their study is from the perspective of instanton counting in cohomological gauge theory on a noncommutative crepant resolution of the quotient singularity in quantum field theory. 
They develop a vertex formalism which computes the partition function of cohomological orbifold Donaldson-Thomas invariants and reproduces \eqref{eqn: inv for cohom} in the rank one case. In particular, they propose explicit formulae for the orbifold Donaldson-Thomas invariants of $[\BC^4/\BZ_r]$ and $ [\BC^4/\BZ_2\times \BZ_2]$ \cite[Prop.~4.7, Conj.~5.10]{ST2} which agree with our expressions in Corollary \ref{cor:dimred2}.

\subsection*{Acknowledgements}

We are grateful to Richard Szabo and Michelangelo Tirelli for useful discussions and for sharing an early version of their work \cite{ST2}, and to Noah Arbesfeld, Francesca Carocci, Amin Gholampour for helpful conversations.
Y.~C. thanks Miles Reid for an enlightening discussion on crepant resolutions on 4-folds many years ago. S.~M.~is particularly indebteded to Michele Cirafici, Nadir Fasola, Michele Graffeo, and Andrea Sangiovanni for many conversations on crepant resolutions.
Y.~C.~is partially supported by RIKEN Interdisciplinary Theoretical and Mathematical Sciences
Program (iTHEMS), JSPS KAKENHI Grant Number JP19K23397 and Royal Society Newton International Fellowships Alumni 2021 and 2022. 
M.K.~was supported by NWO grant VI.Vidi.192.012.
S.M. was partially supported by  NWO grant TOP2.17.004 and the Chair of Arithmetic Geometry, EPFL.

\section{Basics on finite subgroups of $\mathrm{SU}(4)$}

\subsection{Generalized McKay correspondence}

Consider the special unitary group $\mathrm{SU}(4)$ with its standard action, by matrix multiplication, on $\C^4$. For a finite subgroup 
$$G<\mathrm{SU}(4), $$
any element $g\in G$ has a finite order, say $r\geqslant  1$. Choose a
basis such that its action on $\mathbb{C}^4$ is diagonalized as 
$$
g=\mathrm{diag}(\epsilon^{a_1},\epsilon^{a_2},\epsilon^{a_3},\epsilon^{a_4}), \,\,\mathrm{with}\,\, 0\leqslant  a_i< r, 
$$
where $\epsilon=\exp(2\pi\sqrt{-1}/r)$ is a primitive $r$th-root of unity  and  $a_i$ are integers. Following Ito-Reid \cite{IR}, one defines the \textit{age} of $g$ by 
$$\mathrm{age}(g):=\frac{1}{r}\sum_{i=1}^4 a_i\in \{0,1,2,3\}. $$
Note that $\mathrm{age}(g)\leqslant 4-4/r$. Therefore an element of order $r\leqslant 3$ has age $\leqslant 2$. 

Reid's generalized McKay correspondence \cite{Reid} predicts a relation between the geometry of a crepant resolution $X\to \mathbb{C}^4/G$ of the quotient singularity (if it exists) and the representation theory of $G$
\begin{align}\label{equ on mckay}
\mathrm{basis}\,\, \mathrm{of}\,\,H_{2k}(X,\mathbb{Z}) \stackrel{1-1}{\longleftrightarrow}   \mathrm{Conj}_k(G),
\end{align}
where $\mathrm{Conj}_k(G)$ denotes the set of conjugacy classes of age $k$ elements of $G$. 
The above generalized McKay correspondence 
was proposed in any dimension in \cite{Reid2}, and proved 
by Batyrev and Denef-Loeser \cite{B,DL} using non-Archimedean methods and motivic integration.
We remark that the generalized McKay correspondence only makes sense if a crepant resolution exists.  
Unlike the 2- and 3-dimensional cases, where crepant resolutions always exist \cite{Ito1, Ito2, Ito3, Mar, MOP, Roan, BKR}, crepant resolutions may not exist in general for $\mathbb{C}^4/G$ \cite{MMM, Reid}.
They have been mostly studied when $G$ is abelian \cite{DHZ1, DHZ2, Mu, Sa, Sato} (we recall some examples in \S \ref{sec: examples}).

\subsection{Subgroups with ages at most one} \label{sec:ageatmostone}

We refer to \cite{HH} for a classification of all finite subgroups of $\mathrm{SU}(4)$. 
Here we classify all finite subgroups $G< \mathrm{SU}(4)$ with elements of age less than or equal to one, generalizing \cite[Lem.~24]{BG1}. In order to state our result, we fix the embedding $\mathrm{SU}(2) < \mathrm{SU}(4)$ by acting trivially on the third and fourth coordinate of $\C^4$. Similarly, we fix the embedding $\mathrm{SU}(3) < \mathrm{SU}(4)$ by acting trivially on the fourth coordinate of $\C^4$. We also consider the standard embedding of the special orthogonal group $\mathrm{SO}(3) < \mathrm{SU}(3)$.

\begin{prop}\label{classify g=1 abelian}
Let $G< \mathrm{SU}(4)$ be a finite subgroup such that all its elements have age $\leqslant 1$. Then, up to unitary conjugation, $G< \mathrm{SO}(3)<\mathrm{SU}(4)$ or  $G< \mathrm{SU}(2)<\mathrm{SU}(4)$.
\end{prop}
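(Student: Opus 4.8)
The plan is to extract the eigenvalue structure forced by the age hypothesis on $V := \mathbb{C}^4$, and then run a short representation-theoretic case analysis governed by the dimension $d := \dim V^G$ of the invariants of the given faithful representation. First I would prove the elementary \emph{age-pairing identity}: for $g \in G$ of order $r$ with eigenvalues $\epsilon^{a_1}, \dots, \epsilon^{a_4}$ ($0 \le a_i < r$),
\[
\mathrm{age}(g) + \mathrm{age}(g^{-1}) = \#\{i : a_i \neq 0\} = 4 - \dim\ker(g - \mathrm{id}),
\]
since $g \mapsto g^{-1}$ sends each nonzero $a_i$ to $r - a_i$ and $a_i + (r-a_i) = r$. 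As $g^{-1} \in G$ also has age $\le 1$, the hypothesis forces $\dim\ker(g - \mathrm{id}) \ge 2$ for every $g$. Hence each $g$ has eigenvalues $\{1, 1, \lambda, \lambda^{-1}\}$ with $|\lambda| = 1$ (the determinant condition pairs the two nontrivial eigenvalues), so $\chi_V(g) = 2 + 2\cos\theta_g$ is \emph{real and nonnegative}, equal to $4$ only at $g = \mathrm{id}$. In particular $V$ is self-dual.

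Next I would split on $d = \langle \chi_V, 1\rangle$. Positivity gives $\langle \chi_V, 1\rangle = \tfrac{1}{|G|}\sum_g \chi_V(g) > 0$, so $d \ge 1$, ruling out $d = 0$. Writing $V = V^G \oplus V'$, the determinant-one condition gives $\det|_{V'} = 1$, so $\dim V' = 1$ is impossible (a single nontrivial character cannot have trivial determinant), ruling out $d = 3$. If $d \ge 2$ then $\dim V' \le 2$ and $G$ acts faithfully on $V'$ with trivial determinant, i.e.\ $G < \mathrm{SU}(V') \subseteq \mathrm{SU}(2)$; after a unitary change of basis this is exactly the embedding $\mathrm{SU}(2) < \mathrm{SU}(4)$.

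This leaves the case $d = 1$, the heart of the argument. Here $V = \mathbb{C} \oplus W$ with $W$ a $3$-dimensional faithful representation satisfying $W^G = 0$ and $\det|_W = 1$; the eigenvalue analysis gives $\chi_W(g) = 1 + 2\cos\theta_g$ real, so $W$ is self-dual and every nontrivial $g$ has age exactly $1$ on $W$ (recovering the hypotheses of \cite[Lem.~24]{BG1}). The goal is to show $W$ is of \emph{orthogonal} (real) type, which yields $G < \mathrm{SO}(3) < \mathrm{SU}(4)$. I would argue by decomposing $W$ into irreducibles: if $W$ is irreducible it is odd-dimensional and self-dual, hence cannot be symplectic and must be orthogonal; if $W = W_2 \oplus W_1$ then self-duality forces the character $W_1$ to have order $\le 2$ and $W_2$ to be self-dual, while $\det|_{W_2} = (\det W_1)^{-1} \neq 1$ rules out the symplectic (i.e.\ $\mathrm{SL}_2$) case, so $W_2$ is orthogonal; and if $W$ splits into three characters, then self-duality together with $W^G = 0$ forces each to be $\pm 1$-valued with image the Klein four group. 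In every case $W$ carries an invariant nondegenerate symmetric form, giving $G < \mathrm{SO}(3)$.

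The main obstacle is precisely this last step: ruling out that the self-dual $3$-dimensional piece is of symplectic type, i.e.\ upgrading ``self-dual'' to ``orthogonal.'' The real character only delivers self-duality; the extra input that forces real type is the combination of the \emph{odd} dimension of $W$ and the determinant-one condition, which together obstruct any genuine symplectic constituent. This is also exactly what separates the two conclusions $\mathrm{SU}(2)$ versus $\mathrm{SO}(3)$, and is the point where the $4$-dimensional statement genuinely reduces to---and extends---the $3$-dimensional computation of Bryan--Gholampour.
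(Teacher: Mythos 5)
Your proof is correct, and its first half --- the age-pairing identity $\mathrm{age}(g)+\mathrm{age}(g^{-1})=\#\{i: a_i\neq 0\}$ forcing every $g$ to have eigenvalues $\{1,1,\lambda,\lambda^{-1}\}$ --- is exactly the paper's opening move. Where you diverge is in how you pass from this eigenvalue structure to the dichotomy. The paper computes the Frobenius--Schur-type sum $\tfrac{1}{|G|}\sum_g\chi_V(g^2)$ via the identity $\chi_V(g^2)=1+(\overline{\chi}_V(g)-1)(\chi_V(g)-3)$, combines it with the orthogonality relations to obtain the single inequality $c+2q\leq 4t-4$ (in the paper's notation, $r,c,q,t$ count the real, complex, quaternionic and trivial constituents of $V$), and reads everything off at once: $t=0$ is impossible, $t=1$ forces $c=q=0$ so that $V$ is real and $G<\mathrm{SO}(3)$, and $t\geq 2$ gives $G<\mathrm{SU}(2)$. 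You instead obtain $t\geq 1$ from positivity of the real, nonnegative character, dispose of $t=3$ by the determinant condition, and, in the crucial $t=1$ case, establish reality of the $3$-dimensional complement $W$ by decomposing it into irreducibles and excluding symplectic constituents by hand (odd dimension when $W$ is irreducible; $\mathrm{Sp}(2,\BC)=\mathrm{SL}(2,\BC)$ against $\det W_2=W_1\neq 1$ in the $2+1$ case; order-two characters in the $1+1+1$ case). Your route is more elementary --- it never needs the $\chi_V(g^2)$ identity or the indicator formula $\tfrac{1}{|G|}\sum_g\chi(g^2)=r-q$ --- at the cost of a three-way case analysis, whereas the paper's inequality treats all cases uniformly; your version also makes visible exactly where the $\mathrm{SO}(3)$ versus $\mathrm{SU}(2)$ split comes from. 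Both arguments are complete.
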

\begin{proof} 
For an order $r$ non-trivial element $g\in G$, write
\begin{align}\label{eqn: eigenvalues g}
    g=\mathrm{diag}(\epsilon^{a_1},\epsilon^{a_2},\epsilon^{a_3},\epsilon^{a_4}), 
\end{align}
where $\epsilon $ is a primitive $r$-root of unity,  $0\leqslant  a_i< r$ and $\sum_ia_i \equiv 0\, \mathrm{(mod} \, r) $. Then precisely two $a_i$'s are zero. In fact, denote by $\delta$ the number of indices $i$ such that  $a_i\neq 0$. We have
\[\mathrm{age}(g^{-1})=\frac{1}{r}\sum_{i:\, a_i\neq 0}(r-a_i)=\delta-\mathrm{age}(g),\]
by which we conclude that $\delta=2$.

Let $V$ denote the $4$-dimensional representation induced by $G< \mathrm{SU}(4)$. By \eqref{eqn: eigenvalues g}, the action of $g$ on $V$ has eigenvalues $\{1,1,\epsilon^{k}, \epsilon^{-k}\}$, for some  integer $k$. We consider the character formula
\begin{align*}
    \chi_V(g^2)&=1+1+\epsilon^{2k}+\epsilon^{-2k}\\
    &= 1+(1+\epsilon^k+\epsilon^{-k})(-1+\epsilon^k+\epsilon^{-k})\\
    &= 1+(\overline{\chi}_V(g)-1)(\chi_V(g)-3),
\end{align*}
where we used that $\overline{\chi}_V(g)=\chi_{V}(g)$. Therefore
\begin{align}\label{eqn: FH}
    \frac{1}{|G|}\sum_{g\in G}\chi_{V}(g^2)=\frac{1}{|G|}\sum_{g\in G}\left(1+(\overline{\chi}_V(g)-1)(\chi_V(g)-3)\right).
\end{align}
Following \cite[\S 3.5]{FH}, denote by $r,c,q,t$ respectively the number of real, complex, quaternionic and trivial irreducible subrepresentations of $V$. By \cite[\S 2.1,~Prop.~2.1 \& \S 3.5,~Ex.~3.38]{FH}, we have 
$$\frac{1}{|G|}\sum_{g\in G}\chi_{V}(g^2)=r-q. $$
The orthogonality of characters \cite[\S 2.2,~Thm.~2.12]{FH} implies that 
\begin{align*}
    \frac{1}{|G|}\sum_{g\in G}\overline{\chi}_V(g)\chi_V(g) \geq r+c+q,\quad \frac{1}{|G|}\sum_{g\in G}\overline{\chi}_V(g)=t.
\end{align*}
Using \eqref{eqn: FH}, we obtain the relation
\[c+2q\leq 4t-4.\]
Clearly $t=0$ leads to a contradiction, therefore either $t=1$ and $c=q=0$, which implies that $V$ is a real representation,  in other words $G< \mathrm{SO}(3)< \mathrm{SU}(4)$, or $t\geq 2$, which implies that $G< \mathrm{SU}(2)< \mathrm{SU}(4)$.
\end{proof}
Recall that for any finite subgroup $G < \mathrm{SU}(d)$ acting by matrix multiplication on $\BC^d$, we denote by $G$-$\Hilb(\mathbb{C}^d)$ the Nakamura Hilbert scheme parametrizing $G$-invariant closed subschemes $Z \subset \mathbb{C}^d$ satisfying $H^0(Z,\oO_Z) \cong \BC[G]$, as $G$-representations, where $\BC[G]$ denotes the regular representation of $G$ \cite{Nak}. It comes with a map $G$-$\Hilb(\mathbb{C}^d) \to \mathbb{C}^d/G$ which is a \emph{candidate} for a crepant resolution. 

Let $G< \mathrm{SU}(3)< \mathrm{SU}(4)$ be a finite subgroup acting trivially on the 4-th coordinate of $\mathbb{C}^4$ (such as in this proposition). Then we have  
$$ \mathbb{C}^4/G\cong \mathbb{C}^3/G\times \mathbb{C}. $$
Moreover $X=G$-$\Hilb(\mathbb{C}^4) \cong G$-$\Hilb(\mathbb{C}^3) \times \mathbb{C}$. It is known that $G$-$\Hilb(\mathbb{C}^3)$ is irreducible and provides a crepant resolution of $\mathbb{C}^3/G$ \cite{BKR}. Hence $X$ is irreducible and provides a crepant resolution of $\mathbb{C}^4/G$.

\subsection{Other examples}\label{sec: examples}

In general, non-identity elements in $G< \mathrm{SU}(3)< \mathrm{SU}(4)$ are not necessarily of age one.
\begin{example}\label{exam: Z_3}
Let $\epsilon=\exp(2\pi\sqrt{-1}/3)$. Then 
\begin{align*}
G=\Big\{\mathrm{diag}(\epsilon^i,\epsilon^i,\epsilon^i,1)\,\Big|\,i=0,1,2  \Big\}\cong \BZ_3 \end{align*} 
has elements of age $0,1,2$. Moreover
\emph{G}-$\Hilb(\mathbb{C}^3) \times \C = K_{\mathbb{P}^2}\times \C$ is a crepant resolution of $\C^4/G \cong \mathbb{C}^3 / G \times \mathbb{C}$.
\end{example}
There are examples of abelian subgroups $G< \mathrm{SU}(4)$ such that all elements have age $\leqslant  2$, 
$G\nsubseteq \mathrm{SU}(3)$, and 
$\C^4/G$ has a crepant resolution.
\begin{example}[{\cite[Exam.~5.4]{Reid}}]
Consider the following order $8$ subgroup of $\mathrm{SU}(4)$
\begin{align*}G=\big\{\pm\mathrm{id},\,\, \pm\mathrm{diag}(-1,-1,1,1),\,\,\pm\mathrm{diag}(1,-1,-1,1),\,\, \pm\mathrm{diag}(-1,1,-1,1)\big\}\cong \mathbb{Z}_2^{\times 3}.  \end{align*} 
Here $\mathrm{age}(\mathrm{diag}(-1,-1,-1,-1))=2$ and all other non-identity elements have age one.
Then $\C^4/G$ has crepant resolutions, none of which are given by the $G$-Hilbert scheme.
\end{example}
The above example can be generalized to the following series of examples.
\begin{example}[{\cite[Thm.~4.1~(d)]{Sa}, \cite[Thm.~1.3~(i)]{HIS}, \cite{Mu}}] ${}$ \\
For any $r\in \mathbb{Z}_{\geqslant 2}$, let $\epsilon=\exp(2\pi\sqrt{-1}/r)$ and define $G< \mathrm{SU}(4)$ as the subgroup generated by 
$$\mathrm{diag}(\epsilon,1,1,\epsilon^{r-1}), \,\, \mathrm{diag}(1,\epsilon,1,\epsilon^{r-1}), \,\, \mathrm{diag}(1,1,\epsilon,\epsilon^{r-1}). $$
Then $G\cong \mathbb{Z}_r\times \mathbb{Z}_r \times \mathbb{Z}_r$ and 
$$\C^4/G\cong \big\{(x_1,x_2,x_3,x_4,y)\,|\, x_1x_2x_3x_4=y^r \big\}. $$ 
By \cite[Thm.~1.2]{DHZ1}, it has a crepant resolution. By \cite{Mu}, \emph{G}-$\Hilb(\mathbb{C}^4)$ is a resolution of singularity $\C^4/G$ but not a crepant one.
Note that when $r=4$, $\mathrm{diag}(\epsilon,\epsilon,\epsilon,\epsilon)\in G$ and the age of $\mathrm{diag}(\epsilon^3,\epsilon^3,\epsilon^3,\epsilon^3)$ is $3$.  
\end{example}

\begin{example}[{\cite[Props.~3.1,~3.2~(i)]{Sato}, \cite[Thm.~4.1~(b)]{Sa}}]  ${}$ For any $r\in \mathbb{Z}_{\geqslant 2}$, let $\epsilon=\exp(2\pi\sqrt{-1}/r)$ and define $G< \mathrm{SU}(4)$ as the subgroup generated by 
$$\mathrm{diag}(\epsilon,\epsilon,1,\epsilon^{r-2}), \,\, \mathrm{diag}(1,1,\epsilon,\epsilon^{r-1}). $$
Then $G\cong \mathbb{Z}_r\times \mathbb{Z}_r$ and $\C^4/G$ has a crepant resolution. 
If $r$ is even, 
\emph{G}-$\Hilb(\mathbb{C}^4)$ is a crepant resolution. If $r$ is odd, \emph{G}-$\Hilb\mathbb{C}^4$ is a blow-up of a certain crepant resolution. 
Note that when $r=4$, $\mathrm{diag}(\epsilon,\epsilon,\epsilon,\epsilon)\in G$ and the age of $\mathrm{diag}(\epsilon^3,\epsilon^3,\epsilon^3,\epsilon^3)$ is $3$.  
\end{example}
Similar to Example \ref{exam: Z_3}, we have:
\begin{example}
Let $\epsilon=\exp(2\pi\sqrt{-1}/4)$ and $G=\mathbb{Z}_4$ acting on $\C^4$ by 
$\mathrm{diag}(\epsilon^i,\epsilon^i,\epsilon^i,\epsilon^i)$ with $i=0,1,2,3$, which have age $0,1,2,3$ respectively. Then $K_{\mathbb{P}^3}$ is a crepant resolution of $\C^4/G$. 
\end{example}
Here is an example for which no crepant resolution exists. 
\begin{example}
Let $G\cong \mathbb{Z}_2$ act on $\C^4$ by $\mathrm{diag}(\pm1,\pm1,\pm1,\pm1)$. Then $\C^4/G$ does not have a crepant resolution. 
By Eqn.~\eqref{equ on mckay}, any crepant resolution will have vanishing second cohomology which is not possible. 
\end{example}

\section{$\mathrm{DT_4}$ virtual structures} \label{sec: DT4 virtual struc}

\subsection{Virtual classes and virtual structure sheaves}\label{sec: 3.1}

Let $X$ be a Calabi-Yau 4-fold, i.e., a smooth quasi-projective variety with trivial canonical bundle $K_X\cong \oO_X$.
Denote by $M$ one of the following two moduli spaces:
\begin{itemize}
\item $I:=\Hilb^n(X,\beta)$ is the Hilbert scheme of proper closed subschemes $Z \subseteq X$ (or, equivalently, their ideal sheaves $I_Z \subset \mathcal{O}_X$) of dimension $\leqslant  1$ satisfying $[Z] = \beta$ and $\chi(\oO_Z)=n$,
\item $P:=P_n(X,\beta)$ is  the moduli space of stable pairs $(F,s)$, where $F$ is a pure 1-dimensional sheaf on $X$ with proper scheme theoretic support in class $\beta$, $\chi(F) = n$, and $s \in H^0(X,F)$ has 0-dimensional cokernel. 
\end{itemize} 
It is well-known that $M$ is endowed with an obstruction theory
\begin{equation}\label{eqn: obstruction theory schemes}
     \mathbb{E}_M=\mathbf{R}\pi_{M*}\mathbf{R}\HOM(\eE, \eE)_0[3]\to \mathbb{L}_{M}, 
\end{equation}
where $(\cdot)_0$ denotes the trace-free part,  
   $\pi_M:X\times M\to M$ is projection, and $\eE$ is either the universal ideal sheaf  $\mathcal{I}\subset \oO_{X\times I}$ or the universal stable pair $\BI^\bullet=(\oO_{X\times P}\to \BF)\in D^{b}(X\times P)$. Furthermore, $\mathbb{L}_{M} \in D^{\mathrm{b}}(M)$ denotes the truncated cotangent complex of $M$.
Moreover, the obstruction theory \eqref{eqn: obstruction theory schemes} is symmetric, i.e., there is an isomorphism $\theta : \mathbb{E}_M^{\vee}[2] \cong \mathbb{E}_M$ induced by Grothendieck-Verdier duality for $\pi : X \times M \to M$, such that $\theta^\vee[2] = \theta$.\footnote{For non-compact $X$, this requires taking a smooth compactification $j : X \hookrightarrow \overline{X}$, i.e., an open immersion into a smooth projective 4-fold. For example, in the stable pairs case we obtain an open subscheme $P := P_n(X,\beta) \subset \overline{P} := P_n(\overline{X},j_*\beta)$. The universal object $\overline{\mathbb{I}^{\mdot}} = (\oO_{\overline{X} \times \overline{P}} \to \overline{\mathbb{F}})$ is related to the universal object $\mathbb{I}^\mdot = (\oO_{X \times P} \to \mathbb{F})$ by adjunction, i.e., $\overline{\mathbb{I}^{\mdot}}|_{ \overline{X} \times P} \cong (\oO_{X \times \overline{P}} \to (j \times \mathrm{id})_! \mathbb{F} )$. Then $\mathbb{E}_P$ is the restriction of a 3-term obstruction theory on $\overline{P}$ and the symmetric form $\theta : \mathbb{E}_P^\vee[2] \cong \mathbb{E}_P$ is induced by Grothendieck-Verdier duality for $\overline{\pi} : \overline{X} \times \overline{P} \to \overline{P}$ and restriction to $P$.}  
By \cite{OT}, there is virtual class and a (twisted) virtual structure sheaf 
\begin{align*}
    [M]^{\vir} \in\, A_{n}\left(M,\mathbb{Z}\left[\tfrac{1}{2}\right]\right), \quad \widehat{\oO}_{M}^{\vir} \in\, K_0\left(M, \mathbb{Z}\left[\tfrac{1}{2}\right]\right),
\end{align*}
which depend on the choice of orientation \cite{CGJ, Bojko, CL2}. The virtual class coincides via the cycle map with the (real) virtual classes constructed by \cite{BJ} (and \cite{CL1} in special cases) after inverting 2.

\subsection{Virtual localization formula}\label{sec: virtual localization}

Let $\mathbf{T}$ be an algebraic torus acting regularly on a Calabi-Yau 4-fold $X$ such that $\mathbf{T}$ preserves the Calabi-Yau volume form --- the main source of examples are toric varieties and non-compact local varieties. By \cite{Ric}, the $\TT$-action lifts to $M$ and to a $\mathbf{T}$-equivariant structure on the universal sheaf $\eE$. Then $\mathbb{E}$ is $\mathbf{T}$-equivariant and $\TT$-equivariantly self-dual by relative Serre duality. Suppose there exists a smooth projective 4-fold $\overline{X}$ with $\TT$-action, and a $\TT$-equivariant open immersion $X \hookrightarrow \overline{X}$.\footnote{This assumption is satisfied in all the examples considered in this paper.}

Denote by 
$T_M^{\vir} := \mathbb{E}_M^\vee$ the \emph{virtual tangent bundle}. By \cite[Eqn.~(111)]{OT}, there exists an induced self-dual obstruction theory on the $\mathbf{T}$-fixed locus $M^\mathbf{T}$, with virtual tangent bundle $T^{\vir}_{M^\mathbf{T}}= (T_M^{\vir}|_{M^\mathbf{T}})^{\fix}$, i.e., the $\mathbf{T}$-fixed part of the restriction of the virtual tangent bundle. Denote the movable part by $N^{\vir}:=(T_M^{\vir}|_{M^\mathbf{T}})^{\mov}$, which is called the \emph{virtual normal bundle}. We denote their classes in the $\mathbf{T}$-equivariant Grothendieck group of locally free sheaves on $M^{\TT}$ by the same symbols
\[T^{\vir}_{M^\mathbf{T}}, \quad N^{\vir} \in K^0_\TT(M^{\TT}).\] 
For a fixed orientation for $T_M^{\vir}$, and any choice of orientation for $N^{\vir}$, one obtains an induced orientation for $T^{\vir}_{M^\mathbf{T}}$ (simply because $\det(T^{\vir}_{M^\mathbf{T}}) \cong \det(T_M^{\vir}|_{M^\TT}) \otimes \det(N^{\vir})^{-1})$.
Orientations on $N^{\vir}$ always exist, e.g., take any 1-parameter subgroup of $\TT$ and split $N^{\vir}$ into positive and their dual negative weight spaces (see \cite{OT} for details).
Recall the following  virtual localization formula.
\begin{thm}\emph{(\cite[Thm.~7.3]{OT})}\label{thm: OT virtual localization}
Denote by $\iota:M^\mathbf{T}\hookrightarrow M$ the inclusion. Then
\[
\widehat{\oO}_{M}^{\vir}=\iota_*\frac{\widehat{\oO}_{M^\TT}^{\vir}}{\sqrt{\mathfrak{e}^\mathbf{T}}(N^{\vir})}.
\]
In particular, if $M$ is a  proper scheme, then for any $K$-theory class $V\in K_0^\mathbf{T}(M)$ 
\begin{align}\label{eqn: virtual localization chi}
\chi(M, V\otimes \widehat{\oO}_{M}^{\vir})=\chi\left(M^\TT, \frac{V|_{M^\TT}\otimes \widehat{\oO}_{M^\TT}^{\vir}}{\sqrt{\mathfrak{e}^{\mathbf{T}}}(N^{\vir})}\right).    
\end{align}
\end{thm}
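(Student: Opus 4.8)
The plan is to reduce the statement to a local model for the self-dual obstruction theory and then run the $K$-theoretic analogue of the Graber--Pandharipande localization argument, tracking the orthogonal structure throughout. Recall that $\widehat{\oO}_M^{\vir}$ is built in \cite{OT} as a \emph{localized square-root Euler class}: \'etale-locally one embeds $M$ into a smooth $\TT$-scheme $A$ carrying a $\TT$-equivariant orthogonal bundle $(E,q)$ modelling the self-dual obstruction part of $\mathbb{E}_M$, together with an isotropic subcone $\mathfrak{C}\subseteq E$ refining the intrinsic normal cone, and sets $\widehat{\oO}_M^{\vir}=\sqrt{\mathfrak{e}}(E,\mathfrak{C})$. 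These local models glue because the construction depends only on $\mathbb{E}_M$ and the chosen orientation. Hence it suffices to establish the identity $\TT$-equivariantly for such a local model and then glue.

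The key steps, in order, would be the following. First, (i) restrict to the smooth fixed locus $A^\TT$ and split the orthogonal bundle $\TT$-equivariantly and orthogonally as $E|_{A^\TT}=E^{\fix}\oplus E^{\mov}$. Since $q$ is $\TT$-invariant, the weight-$\mu$ part of $E^{\mov}$ pairs nondegenerately with its weight-$(-\mu)$ part, so $E^{\mov}$ is hyperbolic and splits (after choosing a polarization into positive weights) as $F\oplus F^\vee$ with $F$ isotropic. This is exactly what makes $\sqrt{\mathfrak{e}^\TT}(E^{\mov})$ well-defined, equal up to the orientation sign to $\mathfrak{e}^\TT(F)$, and invertible after localization because every weight of $F$ is nonzero. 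Next, (ii) use the compatibility $\mathfrak{C}^\TT=\mathfrak{C}\cap E^{\fix}|_{A^\TT}$ together with $T^{\vir}_{M^\TT}=(T_M^{\vir}|_{M^\TT})^{\fix}$, that is \cite[Eqn.~(111)]{OT}, to identify $\sqrt{\mathfrak{e}}(E^{\fix},\mathfrak{C}^\TT)$ with $\widehat{\oO}_{M^\TT}^{\vir}$ and $E^{\mov}$ with a shift/dual of $N^{\vir}$, so that $\sqrt{\mathfrak{e}^\TT}(E^{\mov})=\sqrt{\mathfrak{e}^\TT}(N^{\vir})$. Finally, (iii) invoke multiplicativity of the square-root Euler class under the orthogonal decomposition $E=E^{\fix}\perp E^{\mov}$, combined with the fact that $\mathfrak{C}$ is pulled back from $\mathfrak{C}^\TT$ in the $E^{\mov}$-directions, to obtain the factorization
\[
\sqrt{\mathfrak{e}}(E,\mathfrak{C})=\iota_*\!\left(\frac{\widehat{\oO}_{M^\TT}^{\vir}}{\sqrt{\mathfrak{e}^\TT}(N^{\vir})}\right),
\]
which is the asserted formula.

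I expect step (iii) to be the main obstacle. Unlike the ordinary Euler class, $\sqrt{\mathfrak{e}}$ is only defined up to sign on each orthogonal summand, and making it genuinely multiplicative under orthogonal direct sums while remaining compatible with deformation to the normal cone requires pinning down a coherent global sign. This is precisely where the hypothesis that an orientation on $N^{\vir}$ induces one on $T^{\vir}_{M^\TT}$, via $\det T^{\vir}_{M^\TT}\cong\det(T_M^{\vir}|_{M^\TT})\otimes\det(N^{\vir})^{-1}$, must be used, and where one must check that the sign is matched consistently across the gluing of local models. The second technical ingredient is proving that $\sqrt{\mathfrak{e}^\TT}(N^{\vir})$ is invertible in $K_0^\TT(M^\TT)_{\mathrm{loc}}$, which follows from Thomason's concentration theorem together with the vanishing of the $\TT$-fixed part of $N^{\vir}$ established in step (i).

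The displayed Euler-characteristic formula \eqref{eqn: virtual localization chi} for proper $M$ is then a formal consequence of the sheaf identity: apply $\chi(M,V\otimes-)$, use the projection formula $V\otimes\iota_*(\mathcal{G})\cong\iota_*(V|_{M^\TT}\otimes\mathcal{G})$ and the invariance of Euler characteristics under proper pushforward $\chi(M,\iota_*(-))=\chi(M^\TT,-)$, noting that $M^\TT$ is proper whenever $M$ is.
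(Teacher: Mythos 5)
This statement is not proved in the paper at all: it is imported verbatim as \cite[Thm.~7.3]{OT}, so there is no in-paper argument to compare yours against. Judged as a reconstruction of the Oh--Thomas proof, your outline gets the architecture right: the \'etale-local model $(A,E,q,\mathfrak{C})$ with $\widehat{\oO}_M^{\vir}$ a localized square-root Euler class, the $\TT$-equivariant orthogonal splitting $E|_{A^\TT}=E^{\fix}\perp E^{\mov}$ with $E^{\mov}$ hyperbolic because weight-$\mu$ and weight-$(-\mu)$ pieces pair perfectly, the resulting invertibility of $\sqrt{\mathfrak{e}^\TT}(N^{\vir})$ after localization, and the orientation bookkeeping via $\det T^{\vir}_{M^\TT}\cong\det(T_M^{\vir}|_{M^\TT})\otimes\det(N^{\vir})^{-1}$. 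The derivation of \eqref{eqn: virtual localization chi} from the sheaf identity by the projection formula and properness of $M^\TT\subseteq M$ is also correct and is exactly how the ``in particular'' clause follows.

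The genuine gap is in your step (iii), which you rightly flag as the main obstacle but then resolve by asserting that ``$\mathfrak{C}$ is pulled back from $\mathfrak{C}^\TT$ in the $E^{\mov}$-directions.'' That is not true in general and is not how the argument runs: the isotropic cone over the fixed locus does not split as a product along the fixed/moving decomposition, any more than the intrinsic normal cone does in Graber--Pandharipande. What one actually needs is Thomason's concentration theorem to reduce to classes supported on $M^\TT$ after inverting the nonzero weights, together with the deformation (Whitney/functoriality) properties of the localized operation $\sqrt{\mathfrak{e}}(E,\mathfrak{C})$ developed in \cite[\S\S 5--7]{OT}, which compare the cone to the sum of the cone of the fixed locus and the moving obstruction without ever claiming a literal product structure. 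Until that comparison is carried out --- including the coherence of signs across the gluing of local Kuranishi charts, which you correctly identify but do not supply --- the factorization $\sqrt{\mathfrak{e}}(E,\mathfrak{C})=\iota_*\bigl(\widehat{\oO}_{M^\TT}^{\vir}/\sqrt{\mathfrak{e}^\TT}(N^{\vir})\bigr)$ remains an assertion rather than a proof. As written, your text is a faithful plan for the Oh--Thomas proof, not a substitute for it.
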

We pause a moment to explain the notation in Theorem \ref{thm: OT virtual localization}. For a $\mathrm{SO}(2r,\mathbb{C})$-bundle $E$ on $M$, one defines the \emph{$K$-theoretic Euler class}
\begin{align*}
    \mathfrak{e}(E):=\Lambda^\bullet E^*:=\sum_{i=0}^{2r}(-1)^i \Lambda^i E^*\in K^0(M)
\end{align*}
and its \emph{square root}
\begin{align*}
    \sqrt{\mathfrak{e}}(E)\in K^0\left(M, \mathbb{Z}\left[\tfrac{1}{2}\right]\right),
\end{align*}
which satisfies
\begin{align*}
  \mathfrak{e}(E) =(-1)^{r} (\sqrt{\mathfrak{e}}(E))^2.
\end{align*}
If $T\subset E$ is a maximal isotropic subbundle (with respect to the quadratic pairing on $E$), one simply has
\begin{align*}
     \sqrt{\mathfrak{e}}(E)=\pm\frac{\Lambda^\bullet T^*}{(\det T^*)^{\frac{1}{2}}}.
\end{align*}
Here the sign is uniquely determined by $T$ and the orientation on $E$. Moreover, the square root of a line bundle is uniquely determined in $K^0\left(M, \mathbb{Z}\left[\tfrac{1}{2}\right]\right)$ as defined in \cite[\S 5.1]{OT}. The definition of $\sqrt{\mathfrak{e}}$  extends to the virtual normal bundle.

If the moduli space $M$ is not proper but $M^\TT$ is proper, then we take \eqref{eqn: virtual localization chi} as the definition of the left-hand side, i.e., we \emph{define} invariants by $\TT$-localization. Here any choice of orientation on $M$ and $N^{\vir}$ induces an orientation on $M^{\TT}$ which we use on the right-hand side of \eqref{eqn: virtual localization chi}.

Computing  the square root Euler class is a challenging task, considering that maximal isotropic subbundles do not always exist. However, the situation simplifies if the $\TT$-fixed locus is reduced and 0-dimensional as studied by \cite{Nek, NP, CK1, CK2, CKM, Mon}.
\begin{defi}
Let $E\in K^0_\TT(\pt)$ be a virtual $\TT$-representation. We say that $T\in K^0_\TT(\pt)$ is a \emph{square root} of $E$ if
\[
E=T+\overline{T}\in K_\TT^0(\pt),
\]
where $\overline{(\cdot)}$ denotes the dual $\TT$-representation.
\end{defi}
For an irreducible $\TT$-representation $t^{\mu}$, we set 
$$[t^\mu]:=t^{\frac{\mu}{2}}-t^{-\frac{\mu}{2}}$$ 
and extend it  to any virtual $\TT$-representation $V=\sum_\mu t^\mu- \sum_\nu t^\nu$ by
\begin{align*}
    [V]=\frac{\prod_\mu[t^\mu]}{\prod_\nu [t^\nu]},
\end{align*}
where we assume that no weights $\nu$ are trivial. Here we use multi-index notation $t$ for the equivariant parameters of $\TT$, i.e., $t = (t_1, \ldots)$ and $t^\mu = \prod_i t_i^{\mu_i}$.

Assume now that the $\TT$-fixed locus $M^\TT$ is 0-dimensional and reduced. 
Then the virtual tangent bundle at every fixed point $Z\in M^\TT$ is a $\TT$-representation with no positive $\TT$-fixed term, which admits a (non-unique) ``square root''
\begin{align*}
    T^{\vir}_{M,Z}=T^{\half}_Z+\overline{T^{\half}_Z} \in K^0_{\TT}(\pt).
\end{align*}
One computes its square root Euler class as
\begin{align*}
    \sqrt{\mathfrak{e}}(T_{M,Z}^\vir)&= \pm \frac{\Lambda^\bullet T^{\half,\vee}_Z}{(\det T^{\half,\vee}_Z)^{\frac{1}{2}}}\\
    &=\pm [T^{\half}_Z].
\end{align*}
Here the sign $\pm 1 $ depends on the choice of orientations of $T_M^{\vir}, N^{\vir}$ and the choice of the square root $T^{\half}_Z$ at the $\TT$-fixed point $Z$, 
and in the last equality we use \cite[\S 6.1]{FMR}. In this case, Theorem \ref{thm: OT virtual localization} reduces to
\begin{align}\label{eqn: localization with []}
\chi(M, V\otimes \widehat{\oO}_{M}^{\vir})=\sum_{Z\in M^\TT}\pm  [-T^{\half}_Z]\cdot V|_Z.
\end{align}
For more discussions on the \emph{signs}, see \cite[Rmk.~1.18]{CKM}, \cite{Mon}.

\begin{rmk}
Notice that in \eqref{eqn: localization with []} we used $T^{\vir}_{M,Z}$ rather than the virtual normal bundle $N^{\vir}_{Z}$. In fact, if $T^{\vir}_{M,Z}$ has no negative $\TT$-fixed terms, then $T^{\vir}_{M,Z}=N^{\vir}_{Z} \in K_\TT^0(\pt)$, while if  $T^{\vir}_{M,Z}$ has a negative $\TT$-fixed term we have $[T_{M,Z}^{\vir}]=0$. 
\end{rmk}

\subsection{Nekrasov genus}

Let $X$ be a Calabi-Yau 4-fold and $L$ a line bundle on $X$. We define the \emph{tautological complex}
\begin{equation*} 
L^{[n]} := \mathbf{R}\pi_{I*} (\pi_X^* L \otimes \oO_{\mathcal{Z}}), \quad \mathbf{R}\pi_{P*} (\pi_X^* L \otimes \mathbb{F}),
\end{equation*}
on the moduli spaces $I$ and $P$ (cf. \S \ref{sec: 3.1}),
where $ \mathcal{Z}\subset X\times I$ is the universal closed subscheme, $\mathbb{I}^{\mdot} =(\oO_{P \times X} \rightarrow \mathbb{F})\in D^{\mathrm{b}}(X\times P) $ is the universal stable pair, and $\pi_X, \pi_I, \pi_P$ are the projections.

In \cite{CKM}, we defined the following \emph{Nekrasov genera} extending definitions for Hilbert schemes of points on $\mathbb{C}^4$ used in the physics literature \cite{Nek, NP}.
\begin{defi}\textrm{(\cite[Def.~0.2]{CKM})} \label{Nekgen}
Let $X$ be a projective Calabi-Yau 4-fold, with  a trivial $\BC^*$-action and let $\oO_X \otimes y$ be the trivial line bundle with non-trivial $\BC^*$-equivariant 
structure corresponding to the irreducible character $y$.
For any line bundle $L$ on $X$, we define the \emph{Nekrasov genus} by
\begin{align*}
I_{n,\beta}(X,L, y) := \chi\left(I, \widehat{\oO}^{\vir}_I \otimes \widehat{\Lambda}^{\mdot} (L^{[n]} \otimes y^{-1}) \right)\in \mathbb{Z}\left[\tfrac{1}{2}\right](y^{\frac{1}{2}}),
\end{align*}
where $\widehat{\Lambda}^\bullet(\cdot):=\det(\cdot)^{-1/2}\otimes \Lambda^{\bullet}(\cdot)$ and we define $P_{n,\beta}(X,L, y)$ analogously replacing $I$ by $P$.\footnote{The square root may not exists as a genuine line bundle but it uniquely exists as a class in the $K$-group if we invert 2 \cite[Rmk.~5.2]{OT}.} Suppose $X$ is not projective but endowed with a $\TT$-action preserving the Calabi-Yau volume form and such that the fixed locus $I^{\TT}$ is proper. Suppose $L$ is a $\TT$-equivariant line bundle on $X$. Then we define the invariants by means of the virtual localization formula \eqref{eqn: virtual localization chi}
\begin{align}\label{eqn: inv loc ordinary}
I_{n,\beta}(X,L, y) := \chi\left(I^\TT,\frac{\widehat{\oO}^{\vir}_{I^\TT}}{\sqrt{\mathfrak{e}^{\TT}}(N^{\vir})} \otimes \widehat{\Lambda}^{\mdot} (L^{[n]}|_{I^\TT} \otimes y^{-1}) \right)\in K_0^{\TT}(\pt)_{\mathrm{loc}}(y^{\frac{1}{2}}),
\end{align}
and similarly for $P_{n,\beta}(L, y)$, 
where 
\begin{equation}\label{equ for k0 loc} K_0^{\TT}(\pt)_{\mathrm{loc}}:=\BQ\left(t_1^{\frac{1}{2}},\dots,  t_r^{\frac{1}{2}}\right), \,\,\, r=\rank \TT. \end{equation}
\end{defi}
We define the $K$-theoretic Donaldson-Thomas (DT) and Pandharipande-Thomas (PT) \emph{partition functions} by
\begin{align}\label{pt partition function}
\mathcal{Z}^{\DT}_{X,L}(y,Q,q)&:=1+\sum_{\beta>0,n\in\mathbb{Z}}I_{n,\beta}(X,L, y)\,Q^{\beta}q^n\in K_0^{\TT}(\pt)_{\mathrm{loc}}(y^{\frac{1}{2}})(\!(q,Q)\!),\\ 
\label{dt partition function}
\mathcal{Z}^{\PT}_{X,L}(y,Q,q)&:=1+\sum_{\beta>0,n\in \mathbb{Z}}P_{n,\beta}(X,L, y)\,Q^{\beta}q^n\in K_0^{\TT}(\pt)_{\mathrm{loc}}(y^{\frac{1}{2}})(\!(q,Q)\!), \end{align}
where $Q^\beta$ is multi-index notation with respect to some basis of $H_2(X,\Z)$. The sums run over all non-zero effective curve classes $\beta$ on $X$ and integers $n$.

\section{Stable pairs on crepant resolutions}

\subsection{Nakamura $G$-Hilbert schemes}\label{sect on Nak G Hilb}

Let $G$ be a finite subgroup of $\mathrm{SU}(2)$. There is an associated action of $G$ on $\BC^2$, which extends to an action on $\BC^4$ by trivially acting on the third and fourth coordinates. By \cite[Thm.~1.3]{IN2}, the Nakamura $G$-Hilbert scheme $S=G\textrm{-Hilb}(\mathbb{C}^2)$ \cite{Nak} realizes the (minimal) crepant resolution $S\to \BC^2/G$ 
of the ADE singularity $\BC^2/G$. Therefore we have a crepant resolution 
\[
X=G\textrm{-Hilb}(\mathbb{C}^4) \cong  S\times \BC^2\to \BC^4/G.
\]
Then $X$ is a Calabi-Yau 4-fold, endowed with a natural $(\BC^*)^3$-action induced by the lift of  the diagonal action on $\BC^2$ (see also \cite[Prop.~8.2]{B})
\begin{align}\label{eqn: diagonal action C^2}
  (t,t_3,t_4)\cdot (x_1, x_2, x_3, x_4)=(tx_1, tx_2, t_3x_3, t_4x_4).  
\end{align}
The restriction to the subtorus 
$$\TT_0=\{t^2 t_3t_4=1\}\subset (\BC^*)^3$$ 
preserves the Calabi-Yau volume form of $\BC^4$ and $X$.

Similarly, if $G$ is a \emph{polyhedral group}, namely  a finite subgroup  of $\mathrm{SO}(3)$, there is an associated action of $G$ on $\BC^3$, which  extends to an action on $\BC^4$ by acting trivially on the fourth coordinate. By \cite{BKR}, the Nakamura $G$-Hilbert scheme $Y=G\textrm{-Hilb}(\mathbb{C}^3)$ is irreducible and gives a preferred\footnote{In \cite{CI}, Craw-Ishii show that when $G$ is abelian, other crepant resolutions are given by moduli spaces of $G$-constellations, which generalize Nakamura's moduli spaces of $G$-clusters. See Yamagishi \cite{Yam} for a recent generalization to the non-abelian case.} crepant resolution $Y\to \BC^3/G$ and therefore we obtain a crepant resolution 
\[
X=G\textrm{-Hilb}(\mathbb{C}^4) \cong Y\times \BC\to \BC^4/G.
\]
Then $X$ is a Calabi-Yau 4-fold, endowed with a natural $(\BC^*)^2$-action induced by the lift of the diagonal action on $\BC^3$
\begin{align}\label{eqn: diagonal action}
  (t,t_4)\cdot (x_1, x_2, x_3, x_4)=(tx_1, tx_2, tx_3, t_4x_4).  
\end{align}
The restriction to the subtorus 
$$\TT_1=\{t^{3}t_4=1\}\subset (\BC^*)^2$$ 
preserves the Calabi-Yau volume form of $\BC^4$ and $X$.

If $G$ is furthermore abelian in one of the above cases, i.e.~$G=\BZ_r$ or $\BZ_2\times \BZ_2$, the induced $G$-action commutes with the $(\BC^*)^4$-action  on $\BC^4$, and the crepant resolution  $X\to \BC^4/G$ is a smooth toric variety. The subtorus 
$$\TT_2=\{t_1t_2t_3t_4=1\}\subset (\BC^*)^4$$ 
preserves the Calabi-Yau volume form of $X$ and  refines the diagonal actions \eqref{eqn: diagonal action C^2}, \eqref{eqn: diagonal action}.  Whenever it is clear from the context, we denote  by $\TT$ one of  the tori $\TT_0,\TT_1$ or $\TT_2$ (the maximal one possible in the given context).

\subsection{Boissi\`ere-Sarti}\label{sec: BS}

We briefly summarize the correspondence of \cite{BS} between crepant resolutions of finite subgroups $G<\mathrm{SO}(3)$ and minimal resolutions of ADE singularities.

For every finite subgroup $G<\mathrm{SO}(3)$, let $\widehat{G}<\mathrm{SU}(2)$ be its double cover. Let 
$$Y\to \BC^3/G$$ 
denote the crepant resolution given by the Nakamura $G$-Hilbert scheme and write $S\to \BC^2/\widehat{G}$ for the minimal resolution of singularities. 
The resolution $Y$ admits a fibration 
\begin{equation}\label{equ on fib of Y}\pi: Y\to \BC, \end{equation} 
whose central fibre $S_0=\pi^{-1}(0)$ is a partial resolution of $\BC^2/\widehat{G}$. Namely, there is a map 
\begin{align*}
    f:S\to S_0\subset Y, 
\end{align*}
which contracts some irreducible components of the exceptional locus of $S\to \BC^2/\widehat{G}$. 
Recall that the \textit{reduced McKay quiver} of $\widehat{G}$ is obtained from the McKay quiver of  $\widehat{G}$ by removing the vertex of the trivial representation (ref.~\cite[\S 5]{BS}). There are bijections between nodes of the reduced McKay quiver, simple roots of the root system of $\widehat{G}$, and irreducible components of the exceptional divisor of the minimal resolution of $\BC^2/\widehat{G}$. By definition, the components contracted by $f$ correspond to \emph{binary roots} (see \cite[Fig.~5.1,~5.2]{BS}, where binary roots are the black vertices). 
The remaining (non-binary) roots correspond to the irreducible components of the exceptional locus of $Y \to \mathbb{C}^3 / G$.

Denote by $R^+$ the collection of \emph{positive roots} of the root system associated to $\widehat{G}$. To each positive root, there is an associated curve class in $S$, i.e., we have a map
\begin{align*}
   c: R^+\hookrightarrow H_2(S, \BZ),
\end{align*}
which we compose with the contraction of the binary roots to get
\begin{align*}
    \widetilde{c}:R^+\hookrightarrow H_2(S, \BZ)\to H_2(Y, \BZ).
\end{align*}
\begin{defi}\label{def: positive roots}
A non-zero curve class $\beta\in H_2(S, \BZ)$ (resp.~$H_2(Y, \BZ)$)~\emph{corresponds to a positive root} if $\beta\in c(R^+)$ (resp. $\widetilde{c}(R^+)$).
\end{defi}

\subsection{Stable pair invariants}\label{sec: PT invariants resolution}

Let $G< \mathrm{SU}(4)$ be a finite subgroup with elements of age at most 1. Let $X\to \BC^4/G$ be the crepant resolution given by the Nakamura $G$-Hilbert scheme as in \S \ref{sec:ageatmostone}. 
Furthermore, let $\TT$ be one of the tori $\TT_0,\TT_1$, or $\TT_2$ as described in \S \ref{sect on Nak G Hilb}.
\begin{prop} \label{prop:properfixloc}
Let $\beta\in H_2(X,\BZ)$ be a non-zero curve class and  $n\in \BZ$. Then the $\TT$-fixed locus $P_n(X, \beta)^{\TT}$ is proper. 
\end{prop}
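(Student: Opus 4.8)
The plan is to show that every $\TT$-fixed stable pair has its support confined to a fixed projective curve, and then to deduce properness by comparison with a compactification. Write $\rho\colon X\to \BC^4/G$ for the crepant resolution, which is a projective morphism, and set $E:=\rho^{-1}(0)$, the fibre over the cone point. Since $\rho$ is projective, $E$ is a projective scheme, and by the age hypothesis (cf.\ the discussion around \eqref{equ on cr intro}) it is $1$-dimensional, so it is meaningful to support $1$-dimensional sheaves on it.

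First I would establish the confinement. Let $(F,s)\in P_n(X,\beta)^{\TT}$. Being $\TT$-fixed, its support $C:=\Supp F$ is a $\TT$-invariant closed subscheme of $X$, and it is proper by definition of a stable pair. Its image $\rho(C)\subseteq \BC^4/G$ is then proper (image of a proper scheme in a separated one) and, being a closed subscheme of the affine variety $\BC^4/G$, also affine; hence $\rho(C)$ is finite. As $\rho(C)$ is $\TT$-invariant and finite, it consists of $\TT$-fixed points of $\BC^4/G$. One checks that $(\BC^4/G)^{\TT}=\{0\}$ for each of $\TT_0,\TT_1,\TT_2$: a connected-torus orbit contained in the finite set $\rho(C)$ is a single point, which must then be a $\TT$-fixed point of $\BC^4$, and a short computation with the weights in \eqref{eqn: diagonal action C^2}, \eqref{eqn: diagonal action} shows the origin is the only such point. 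Therefore $\rho(C)=\{0\}$ and $C\subseteq E$.

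It remains to upgrade this pointwise statement to properness of the moduli space. I would use the $\TT$-equivariant smooth projective compactification $X\hookrightarrow \overline X$ from \S\ref{sec: virtual localization}, together with the open immersion $P_n(X,\beta)\subset \overline P:=P_n(\overline X,j_*\beta)$; the scheme $\overline P$ is proper, hence so is $\overline P^{\TT}$. Let $\overline P_E\subseteq \overline P$ be the locus of pairs $(F,s)$ with $\Supp F\subseteq E$. Since $E\subset X$ lies in the open part, any such pair is a stable pair on $X$, so $\overline P_E\subseteq P_n(X,\beta)$; conversely, the confinement above gives $P_n(X,\beta)^{\TT}=\overline P_E\cap \overline P^{\TT}$. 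The key technical point is that $\overline P_E$ is closed in $\overline P$, which follows from the fact that in a flat family of pure $1$-dimensional sheaves over a DVR whose generic member is scheme-theoretically supported on the closed subscheme $E$, the special member is as well: if $\mathcal I_E$ denotes the ideal of $E$, then $\mathcal I_E\cdot \mathbb{F}$ is a subsheaf of the $R$-flat (hence $R$-torsion-free) sheaf $\mathbb{F}$ whose generic fibre vanishes, so it vanishes identically. Granting this, $P_n(X,\beta)^{\TT}$ is an intersection of two closed subschemes of the proper scheme $\overline P$, hence proper. I expect the main obstacle to be exactly this last step, namely making rigorous that supports cannot escape to the boundary $\overline X\setminus X$ or spread off $E$ under specialization; the $\TT$-equivariant confinement to the compact core $E$ is precisely what rules this out.
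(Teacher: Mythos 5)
Your proof is correct and follows essentially the same strategy as the paper's: both arguments confine the support of a $\TT$-fixed stable pair to a fixed proper one-dimensional locus by noting that its image under a map to an affine variety is proper, hence finite, hence (by $\TT$-invariance and connectedness of $\TT$) just the origin, and then deduce properness of the fixed locus from this confinement. The only differences are cosmetic: the paper routes the confinement through the Boissi\`ere--Sarti fibration $X\to\BC^2$ and the partial resolution $S_0\to\BC^2/\widehat{G}$ rather than directly through $\rho\colon X\to\BC^4/G$, and it delegates your final compactification/specialization step to a citation of \cite[Prop.~3.1]{CKM2}.
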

\begin{proof}
We prove the case $G<\mathrm{SO}(3)$. The other cases follow from a similar argument. 
We write $X=Y\times \mathbb{C}$ and we recall the fibration $\pi: Y\to \C$ \eqref{equ on fib of Y}. Hence we have a map 
\begin{equation}\label{fib of x} \pi \times \mathrm{id}_{\mathbb{C}} : X\to \mathbb{C}^2. \end{equation}
For any $\TT=\TT_1$-fixed $[(F,s)] \in P_n(X, \beta)$, 
since $F$ is compactly supported and $\mathbb{C}^2$ is affine, $F$ is set theoretically supported on 
the central fibre $S_0=\pi^{-1}(0)\times \{0\}\subset X$ of the map \eqref{fib of x}. 
Let $\widehat{G}$ be the double cover of $G$ as in \S \ref{sec: BS}, then there is a map 
$$S_0\to \mathbb{C}^2/\widehat{G}, $$
which is a partial resolution of an ADE singularity. As $\mathbb{C}^2/\widehat{G}$ is also affine, we know $F$ is set theoretically supported 
on the exceptional locus, which is proper. As in \cite[Prop.~3.1]{CKM2}, we deduce that $P_n(X, \beta)^{\TT}$ is proper. 
\end{proof}

Recall that for any $\TT$-equivariant line bundle $L$ on $X$, we have a partition function (Definition \ref{Nekgen})
\begin{align*}\mathcal{Z}^{\PT}_{X,L}(y,Q,q):=1+\sum_{\beta,n}P_{n,\beta}(X,L, y)Q^{\beta}q^n\in R(\!(q,Q)\!), \end{align*}
where the ring $R$ is by defined as
\[
R=\begin{cases}
\frac{\BQ(t_1^{\frac{1}{2}}, t_2^{\frac{1}{2}}, t_3^{\frac{1}{2}}, t_4^{\frac{1}{2}}, y^{\frac{1}{2}})}{(t_1t_2t_3t_4-1)} & \mathrm{if}\, G \,\,\mathrm{is} \mbox{ abelian},\\
\frac{\BQ(t^{\frac{1}{2}}, t_3^{\frac{1}{2}}, t_4^{\frac{1}{2}}, y^{\frac{1}{2}})}{( t^2t_3t_4-1)} & \mathrm{if}\, G<\mathrm{SU}(2),\\
\frac{\BQ(t^{\frac{1}{2}}, t_4^{\frac{1}{2}}, y^{\frac{1}{2}})}{(t^3t_4-1)} & \mathrm{if}\, G<\mathrm{SO}(3).
\end{cases}
\]
Inspired by the closed formula for the local resolved conifold \cite[Conj.~0.16]{CKM}, we conjecture a closed formula for the stable pair partition function of the crepant resolution $X\to \BC^4/G$.
\begin{conj}\label{conj: PT of G non-abelian}
Let $G< \mathrm{SU}(4)$ be a finite subgroup with elements of age at most 1. Let $X\to \BC^4/G$ be the crepant resolution given by the Nakamura $G$-Hilbert scheme as in \S \ref{sec:ageatmostone}.
Then there exist orientations such that
\[
\mathcal{Z}^{\PT}_{X, \oO_X}(y,Q,q)= \mathrm{Exp}\Bigg( \sum_{\beta \in H_2(X,\mathbb{Z})}\frac{-P_{1, \beta}(X,\oO_X,t_4) [y]\, Q^{\beta}}{[t_4][y^{\frac{1}{2}} q] [y^{\frac{1}{2}} q^{-1}]} \Bigg).
\]
\end{conj}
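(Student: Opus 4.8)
The plan is to compute both sides by $\TT$-equivariant $K$-theoretic localization and to match them factor by factor. By Proposition~\ref{prop:properfixloc} the fixed locus $P_n(X,\beta)^{\TT}$ is proper, so every coefficient $P_{n,\beta}(X,\oO_X,y)$ is well defined through the virtual localization formula \eqref{eqn: virtual localization chi}. First I would pin down the fixed geometry: writing $X=Y\times\BC$ (respectively $X=S\times\BC^2$), any $\TT$-fixed stable pair is scheme-theoretically supported on the proper exceptional locus, which is a tree of $\mathbb{P}^1$'s whose homology classes, by the Boissi\`ere-Sarti picture of \S\ref{sec: BS} and Definition~\ref{def: positive roots}, are exactly those corresponding to positive roots. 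Hence only such $\beta$ contribute, matching the range of summation on the right-hand side.

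Next I would isolate the elementary contributions. For $n=0,1$ the fixed locus is a single reduced point, the pair $(\oO_X\to\oO_C)$ with $C$ the Cohen-Macaulay curve in class $\beta$ (Proposition~\ref{cor:n=0,1}); evaluating \eqref{eqn: localization with []} through the weight decomposition of the relevant $\Ext$-groups --- explicitly in the $\mathrm{SU}(2)$ case via Lemma~\ref{lem:compareExts}, and by reduction to Bryan-Gholampour \cite{BG2} via Lemma~\ref{lemma: Ext of fourfolds and 3-folds} in the $\mathrm{SO}(3)$ case --- produces the $K$-theoretic Gopakumar-Vafa invariant $P_{1,\beta}(X,\oO_X,t_4)$ that appears on the right. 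The content of the conjecture is then that the whole $q$-series is governed by these numbers through the universal denominator $[t_4]\,[y^{1/2}q]\,[y^{1/2}q^{-1}]$, exactly as for the local resolved conifold \cite[Conj.~0.16]{CKM}.

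The heart of a complete proof would be to establish the plethystic, i.e.\ multiplicative, structure. For this I would invoke the $K$-theoretic stable-pair vertex/edge formalism of \cite{CKM}: a $\TT$-fixed pair decomposes into local data at the torus-fixed points (vertices) and along the invariant $\mathbb{P}^1$'s (edges) of the toric resolution, so that $\mathcal{Z}^{\PT}_{X,\oO_X}$ factorizes into a product of edge contributions corrected by vertices. One would then show that each edge over a single exceptional $\mathbb{P}^1$, whose normal bundle in $X$ has the form $\oO(a)\oplus\oO(b)\oplus\oO$ with $a+b=-2$ and the appropriate $\TT$-weights, contributes precisely the conifold factor $\Exp\!\big(-P_{1,\beta}(X,\oO_X,t_4)\,[y]\,Q^\beta/([t_4][y^{1/2}q][y^{1/2}q^{-1}])\big)$, and that the vertex and multi-cover contributions resum to the plethystic exponential over all positive roots.

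The main obstacle is the exact evaluation and simultaneous resummation of the $K$-theoretic vertex and edge for all $n$: beyond $n=0,1$ the fixed loci acquire embedded points and longer chains, and the square-root Euler class $\sqrt{\mathfrak{e}^\TT}$ forces delicate, non-canonical sign and orientation choices (cf.\ \eqref{eqn: localization with []} and \cite[Rmk.~1.18]{CKM}) that must be made coherently so as to assemble into a single global orientation --- precisely the one whose existence the conjecture asserts. A promising way to avoid a brute-force vertex computation is a dimensional-reduction argument relating the $t_4$-refined invariants of $X=Y\times\BC$ to the stable-pair theory of the $3$-fold $Y$, where the Gopakumar-Vafa form of the PT series is already known, and then tracking the $t_4$-weight through the reduction; making this compatible with the $\sqrt{\mathfrak{e}^\TT}$-normalization, and treating the $\mathrm{SU}(2)$ case $X=S\times\BC^2$ in which two transverse directions must be reduced, is where I expect the genuine difficulty to lie. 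This is also why the conjecture is presently established only for $n=0,1$ and, for abelian $G$, order by order through the Maple/Mathematica implementation of the vertex formalism.
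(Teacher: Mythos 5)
The statement you are asked to prove is a conjecture, and the paper itself does not prove it: it only establishes the cases $n=0,1$ (Proposition~\ref{cor:n=0,1}, via Lemmata~\ref{lemma: fixed locus stable pairs}, \ref{lem:compareExts}, \ref{lemma: Ext of fourfolds and 3-folds}) and verifies low-order coefficients for abelian $G$ by machine computation with the vertex/edge formalism (Propositions~\ref{prop: vertex formalism Z_r}, \ref{prop: vertex formalism Z_2 Z_2}). For those parts your plan coincides with the paper's: you identify the fixed locus via properness and the positive-root analysis of \S\ref{sec: BS}, compute the $n=0,1$ contributions from the weight decomposition of the $\Ext$-groups, and you correctly isolate the resummation of the vertex/edge contributions for all $n$ as the genuinely open step. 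So as a description of the available evidence your proposal is accurate, and you are right that the multiplicative (plethystic) structure is exactly what nobody has established.

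Two cautions on the speculative part of your plan. First, the claim that each edge over an exceptional $\mathbb{P}^1$ ``contributes precisely the conifold factor'' is only verified in the paper for $\beta$ irreducible (second bullet of Proposition~\ref{prop: vertex formalism Z_r}, using the one-leg vertex $\mathsf{V}^{\PT}_{(1)\varnothing\varnothing\varnothing}$); for classes $\beta$ that are sums or multiples of roots the fixed pairs live on chains of $\mathbb{P}^1$'s with embedded points distributed among several vertices, and no closed evaluation of the multi-leg $K$-theoretic vertex is known — this is why the paper resorts to order-by-order checks. Second, the dimensional-reduction route you suggest only accesses the specialization $y=t_4$ (see \S\ref{sec: dim red} and the footnote there on poles of the vertex at $y=t_4$), so even granting the $3$-fold answer it cannot recover the full $y$-refined series without an independent argument that the answer has the conjectured structural form in $y$; moreover the $3$-fold input you would rely on ([GJ]) is itself flagged in the paper as having a gap in its proof. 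Neither issue invalidates your outline as a research program, but both mean it falls short of a proof, as does the paper.
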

Here,  for any formal power series $f(p_1, \ldots, p_r; q_1, \ldots, q_s)$ in $\Q(p_1, \ldots, p_r)[\![q_1, \ldots, q_s]\!]$, such that $f(p_1, \ldots, p_r;0,\ldots,0)=0$, its \emph{plethystic exponential} is defined as 
\begin{align}\label{equ on ple} 
\Exp(f(p_1, \ldots, p_r;q_1, \ldots, q_s)) &:= \exp\Big( \sum_{n=1}^{\infty} \frac{1}{n} f(p_1^n, \ldots, p_r^n;q_1^n, \ldots, q_s^n) \Big)
\end{align}
viewed as an element of $\Q(p_1, \ldots, p_r)[\![q_1, \ldots, q_s]\!]$. 
\begin{rmk}
Motivated by \cite{Nag}, we expect Conjecture \ref{conj: PT of G non-abelian} also holds for $X=Y\times \mathbb{C}$, where $Y$ is a toric Calabi-Yau 3-fold such that all genus $g\geqslant 1$ Gopakumar-Vafa invariants
of $Y$ vanish.
\end{rmk}

\begin{rmk}
Applying the dimensional reduction and cohomological limit explained in \S\ref{sec: Limit of theory}, Conjecture \ref{conj: PT of G non-abelian} reduces to an expression which coincides with the formulae for the stable pair invariants of $Y = S \times \mathbb{C}$, where $S$ is the minimal resolution of an ADE singularity in\cite{GJ}.\footnote{The authors of \cite{GJ} point out a mistake in one of their  proofs, but they expect that their main results nevertheless hold.}
\end{rmk}
In analogy with the PT/GV correspondence \cite{CMT2}, we refer to $P_{1, \beta}(X,\oO_X,y)$ as \emph{K-theoretic Gopakumar-Vafa invariants} of $X$ (compare also 
with \cite{KP, CMT1, CT2} for Gopakumar-Vafa invariants of Calabi-Yau 4-folds defined using primary insertions).
In \S \ref{sec: dim red}, we show that 
$$
P_{1, \beta}(X,\oO_X,t_4) = \left\{\begin{array}{cc} \pm\chi\left(P_{1}(S \times \C,\beta),\widehat{\mathcal{O}}^{\vir}\right) & \mathrm{if} \, G < \mathrm{SU}(2), \\ \pm\chi\left(P_{1}(Y,\beta),\widehat{\mathcal{O}}^{\vir}\right) & \mathrm{if} \, G < \mathrm{SO}(3), \end{array}\right.
$$
where the right-hand side are the $K$-theoretic invariants of the Calabi-Yau \emph{3-folds} $S \times \mathbb{C}$ resp.~$Y$ (defined in Nekrasov-Okounkov \cite{NO}) and $\beta$ is viewed as a curve class on $S \times \mathbb{C}$ resp.~$Y$. Therefore, Conjecture \ref{conj: PT of G non-abelian} fully reduces the $K$-theoretic invariants of the 4-fold to the \emph{$n=1$} $K$-theoretic invariants of the 3-fold.

In the rest of this section, we explain how to calculate $P_{1, \beta}(X,\oO_X,y)$ and we provide evidence for Conjecture \ref{conj: PT of G non-abelian} by verifying it in some cases.

\subsubsection{K-theoretic Gopakumar-Vafa invariants}
We determine $P_{n,\beta}(X,\oO_X, y)$ for $n=0,1$ and $\beta\in H_2(X,\BZ)$. In particular, we show that the  $K$-theoretic Gopakumar-Vafa invariants are non-zero only for curve classes corresponding to positive roots.
To any non-zero curve class $$\beta=\sum_i m_i [C_i]$$ corresponding to a positive root, where $C_i$ are irreducible components of the exceptional locus, we can associate a  unique Cohen-Macaulay curve $C$ having generic multiplicity $m_i$ along $C_i$ (ref.~\cite[\S 3]{BG2}). Specifically, when $I_{C_i}$ is the ideal sheaf of $C_i$ in $S$ (resp.~$S_0$), we define the scheme structure on the corresponding component by $I_{C_i}^{m_i}$.
\begin{lem}\label{lemma: fixed locus stable pairs}
Let $G< \mathrm{SU}(4)$ be a finite subgroup with elements of age at most 1. Let $X\to \BC^4/G$ be the crepant resolution given by the Nakamura $G$-Hilbert scheme as in \S \ref{sec:ageatmostone}.
Then $P_n(X, \beta)^{\TT}$ satisfies the following:
\begin{enumerate}
    \item $P_0(X,\beta)^{\TT}=\varnothing$.
    \item At the level of closed points, $P_1(X, \beta)^{\TT}=\big\{(\oO_X\xrightarrow{s}\oO_C)\big\}$ if $\beta$ corresponds to a positive root, where $C$ is the Cohen-Macaulay curve in class $\beta$ and $s$ is the canonical section of $\oO_C$.
    \item $P_1(X, \beta)^{\TT}=\varnothing$ if $\beta$ does not correspond to a positive root.
\end{enumerate}
\end{lem}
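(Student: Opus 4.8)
The plan is to reduce the statement to a classification of $\TT$-invariant Cohen-Macaulay curves supported on the exceptional locus, and then to read off the three assertions from the numerics of stable pairs. Throughout I treat the case $G<\mathrm{SO}(3)$, the case $G<\mathrm{SU}(2)$ being identical with $Y\times\BC$ replaced by $S\times\BC^2$.

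First I would pin down the support of a fixed pair. By Proposition \ref{prop:properfixloc} the sheaf $F$ of any $[(F,s)]\in P_n(X,\beta)^\TT$ has proper support, so it is set-theoretically contained in the central fibre $S_0\times\{0\}$ of the map $\pi\times\mathrm{id}_{\BC}$ associated to the fibration $\pi\colon Y\to\BC$ of \eqref{equ on fib of Y}; since $\TT$ scales the extra $\BC$-factor with nontrivial weight, $\TT$-invariance forces the support onto the exceptional locus $E\times\{0\}$ of $Y\to\BC^3/G$, a configuration of rational curves indexed by the non-binary roots (\S\ref{sec: BS}). Next I would invoke the Pandharipande–Thomas dictionary \cite{PT}: a stable pair determines its scheme-theoretic support, a Cohen-Macaulay curve $C$ with $[C]=\beta$, together with the $0$-dimensional cokernel $Q=\mathrm{coker}(s)$, and $\chi(F)=\chi(\oO_C)+\mathrm{length}(Q)$. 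Thus for a fixed pair one has $n=\chi(\oO_C)+\ell$ with $\ell\geqslant 0$ and $C$ a $\TT$-invariant Cohen-Macaulay curve on $E$.

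Everything then follows from the geometric input: $(\ast)$ every $\TT$-invariant Cohen-Macaulay curve $C\subset E$ with nonzero class $[C]=\beta$ satisfies $\chi(\oO_C)\geqslant 1$, with equality precisely when $\beta$ corresponds to a positive root (Definition \ref{def: positive roots}), in which case $C$ is uniquely the curve built from the ideals $I_{C_i}^{m_i}$ as in \cite[\S 3]{BG2}. Granting $(\ast)$, the three parts drop out formally. If $n=0$ then $\chi(\oO_C)=-\ell\leqslant 0$ contradicts $\chi(\oO_C)\geqslant 1$, giving $P_0(X,\beta)^\TT=\varnothing$. If $n=1$ then $1=\chi(\oO_C)+\ell$ with $\chi(\oO_C)\geqslant 1$ and $\ell\geqslant 0$ forces $\chi(\oO_C)=1$ and $\ell=0$; hence $Q=0$, the section $s$ is surjective, $F\cong\oO_C$ with $s$ its canonical section, and by $(\ast)$ such a $C$ exists (uniquely) exactly when $\beta$ is a positive root. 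To prove $(\ast)$ I would, for $G<\mathrm{SU}(2)$, classify $\TT$-invariant thickenings of the $(-2)$-curve configuration directly: the generic multiplicities along the $C_i$ recover $\beta=\sum_i m_i[C_i]$, and a weight analysis at the two torus-fixed points of each $\BP^1$ shows the Euler characteristic is minimised by the thickening inside $S$ given by $I_{C_i}^{m_i}$, reproducing the positive-root curves. For $G<\mathrm{SO}(3)$ I would transport the question to the surface through the partial resolution $S\to S_0\hookrightarrow Y$ of \S\ref{sec: BS}, under which a $\TT$-invariant Cohen-Macaulay curve on $E$ corresponds to one on the ADE resolution $S$, so that $(\ast)$ becomes the computation already carried out by Bryan–Gholampour \cite{BG2}.

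The main obstacle is $(\ast)$, and specifically ruling out Cohen-Macaulay thickenings that spread into the normal directions of $E$ inside the $4$-fold (the extra $\BC$-, resp.\ $\BC^2$-, factors): a priori these could produce further $\TT$-fixed curves in class $\beta$ with $\chi(\oO_C)\leqslant 1$, which would both violate the bound and break uniqueness in (2). The resolution is that $\TT$-equivariance of the sub-line-bundle of the conormal sheaf defining such a thickening forces its weights at the torus-fixed points of each component to match those of a single normal direction, and the weights of the $\BC$- (resp.\ $\BC^2$-) directions never match up compatibly with the nowhere-vanishing (saturation) condition required by the Cohen-Macaulay property. Hence no genuine thickening off $E$ occurs, and the classification collapses to the surface computation underlying \cite{BG2}.
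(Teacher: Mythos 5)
Your reduction via the Pandharipande--Thomas dictionary ($\chi(F)=\chi(\oO_C)+\length(Q)$ with $C$ the Cohen--Macaulay support) is a legitimate alternative to the paper's route, which instead pushes $F$ forward along $X=Y\times\BC\to Y$ (resp.\ $S\times\BC^2\to S$), splits it into $\TT$-eigensheaves $F_i$, each inheriting a section with $0$-dimensional cokernel, and bounds each $\chi(F_i)\geqslant 1$ via a Harder--Narasimhan/Jordan--H\"older filtration and \cite[Lem.~2.4]{BG2} applied to the last stable quotient. However, your argument has a genuine gap exactly at the point you flag as the ``main obstacle'': the claim that no $\TT$-invariant Cohen--Macaulay thickening off the surface occurs is false. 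Take $C$ a $(-2)$-curve in $S$ (or a positive-root curve in $S_0\subset Y$) and let $C'$ be the ribbon with ideal $(I_C,x_4^2)$, i.e.\ $\oO_{C'}\cong\oO_C\oplus\oO_C\cdot x_4$. This is pure (hence Cohen--Macaulay), $\TT$-invariant, not contained in $Y\times\{0\}$, and the defining sub-line-bundle of the conormal sheaf is $\oO_C\otimes t_4^{-1}$ --- a perfectly consistent, nowhere-vanishing equivariant weight assignment. So the weight-matching/saturation argument you propose cannot rule these out, and in the $G<\mathrm{SU}(2)$ case $X=S\times\BC^2$ there are even more such normal directions.

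What actually saves the lemma is not the non-existence of these curves but the inequality $\chi(\oO_{C'})\geqslant(\text{number of normal layers})\geqslant 2$ for any genuine thickening off the surface, and this is precisely what your proposal leaves unproved; your statement $(\ast)$ as written only covers curves scheme-theoretically supported on $E$, which excludes every class $\beta=\sum m_i[C_i]$ with some $m_i\geqslant 2$. To close the gap you would need the analogue of the paper's eigensheaf decomposition: each $\TT$-weight layer of $p_*\oO_{C'}$ is a nonzero pure $1$-dimensional sheaf on the surface supported on the exceptional locus and hence contributes at least $1$ to $\chi$. A secondary concern: in the $\mathrm{SO}(3)$ case your transport of the classification from $S_0$ to $S$ through the contraction $f:S\to S_0$ is not automatic, since $S_0$ is singular and Cohen--Macaulay curves do not simply correspond under $f$; the paper sidesteps this by quoting \cite[Lem.~2.4]{BG2} directly for curves in $Y$.
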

\begin{proof}
We prove the case when $G<\mathrm{SO}(3)$. The other cases follow from a similar argument. 
We write $X=Y\times \mathbb{C}$ with the natural projection $p: X\to Y$. As in \cite[\S 5.2]{CMT1}, for any $\TT=\TT_1$-fixed stable pair $(F,s)$ on $X$ with proper support, 
we have an eigensheaf decomposition 
\begin{equation}\label{equ1}p_*F=\bigoplus_{i}F_i\end{equation}
with respect to the $\TT$-action and a section $s_i:\oO_Y\to F_i$ for each $i$, whose cokernel is 0-dimensional. 

We claim that $\chi(F_i)\geqslant 1$ and equality holds exactly when $s_i$ is surjective and $F_i$ is stable and scheme-theoretically supported on the CM curve corresponding to a positive root. 
In fact, consider the Harder-Narasimhan and Jordan-H\"older filtration 
$$0 = E_0 \subseteq E_1\subseteq E_2\subseteq\cdots\subseteq E_N=F_i, $$ 
where $E_j/E_{j-1}$ are 1-dimensional stable sheaves with 
\begin{equation*}\frac{\chi(E_1)}{\deg (E_1)}\geqslant \frac{\chi(E_2/E_1)}{\deg (E_2/E_1)}\geqslant \cdots \geqslant \frac{\chi(E_N/E_{N-1})}{\deg (E_N/E_{N-1})}.\end{equation*}
Composing $s_i$ with the surjection $E_N\twoheadrightarrow E_N/E_{N-1}$ gives a section $\oO_Y\to E_N/E_{N-1}$ whose cokernel $Q$ is 0-dimensional. 
Let $C$ be the scheme theoretic support of $E_N/E_{N-1}$, then 
\begin{equation*}\chi(E_N/E_{N-1})=\chi(Q)+ \chi(\oO_C)\geqslant \chi(\oO_C). \end{equation*}
Recall the fibration $\pi: Y\to \C$ \eqref{equ on fib of Y}. 
As $E_N/E_{N-1}$ is $\TT$-fixed, it is set theoretically supported on $S_0=\pi^{-1}(0)$ (proof of Proposition \ref{prop:properfixloc}).
As $E_N/E_{N-1}$ is stable, it is scheme theoretically supported on $S_0=\pi^{-1}(0)$ (e.g.~\cite[Lem.~2.2]{CMT1}).
By \cite[Lem.~2.4]{BG2}, we know 
\begin{equation*} \chi(\oO_C)\geqslant 1 \end{equation*}
and equality holds exactly when $[C]$ corresponds to a positive root.

Finally, from \eqref{equ1} and the claim we know $\chi(F)\geqslant 1$, and equality happens exactly
when $F=\oO_C$ is the structure sheaf of the CM curve $C$ corresponding to a positive root. 
\end{proof}

\begin{rmk}
In the proof of this lemma, we saw that the Cohen-Macaulay curves $C$ in part (2) have the property that $\mathcal{O}_C$ is stable, hence simple. In particular, $C$ is connected and $h^0(\mathcal{O}_C) = 1$, hence $h^1(\mathcal{O}_C)=0$, which will be used frequently in the remainder of this section.
\end{rmk}

The following lemma identifies deformation and obstruction spaces. 
\begin{lem} \label{lem:compareExts}
Let $\beta$ correspond to a positive root and let $I_C\cong(\oO_X\to \oO_C)\in P_1(X, \beta)^{\TT}$ be the unique fixed point as in Lemma \ref{lemma: fixed locus stable pairs}. 
We have $\TT$-equivariant isomorphisms 
\begin{align*}
    \Ext^i_X(I_C, I_C)_0&\cong \Ext^i_X(\oO_C, \oO_C), \quad i=1,2,3,\\
    \Hom_X(I_C, I_C)_0&=\Ext^4_X(I_C, I_C)_0=0.
\end{align*}
\end{lem}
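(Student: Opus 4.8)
The plan is to identify the stable pair complex $I_C$ with the ideal sheaf $\iI_C$ and to compute the trace-free $\Ext$-groups from the structure sequence of $C$, reducing everything to $\RHom_X(\oO_C,\oO_C)$ via a short dévissage together with equivariant Serre duality. Since the section $s$ of $(\oO_X\to\oO_C)$ is surjective, $I_C\cong\iI_C$ sits in the $\TT$-equivariant short exact sequence $0\to\iI_C\to\oO_X\to\oO_C\to0$. The trace map splits $\RHom_X(\iI_C,\iI_C)=\mathbf{R}\Gamma(\oO_X)\oplus\RHom_X(\iI_C,\iI_C)_0$ after inverting $2$, so that $\RHom_X(\iI_C,\iI_C)_0$ is the cone of the natural inclusion $\mathbf{R}\Gamma(\oO_X)\to\RHom_X(\iI_C,\iI_C)$; this is the object I must understand. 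Throughout, equivariant Serre duality is available because $X$ admits the $\TT$-equivariant smooth compactification of the footnote in \S\ref{sec: 3.1} and $C$ is proper; since $\TT$ preserves the Calabi-Yau form, the duality is $\TT$-equivariant with no extra twist.

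Next I would assemble the auxiliary computations, using from the remark after Lemma \ref{lemma: fixed locus stable pairs} that $C$ is connected with $h^0(\oO_C)=1$ and $h^1(\oO_C)=0$. Serre duality then gives $\RHom_X(\oO_C,\oO_X)\cong\BC[-4]$, while $\Ext^i_X(\oO_X,\oO_C)=H^i(\oO_C)$ vanishes for $i\geq1$ and equals $\BC$ for $i=0$. Feeding the latter into the long exact sequence obtained by applying $\RHom_X(-,\oO_C)$ to the structure sequence yields the $\TT$-equivariant isomorphisms $\Ext^i_X(\iI_C,\oO_C)\cong\Ext^{i+1}_X(\oO_C,\oO_C)$ for all $i\geq0$; the case $i=0$ uses that precomposition with $\oO_X\twoheadrightarrow\oO_C$ identifies $\Hom_X(\oO_C,\oO_C)$ with $H^0(\oO_C)$.

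The heart of the argument is a nine-lemma in the derived category. I would form the morphism from the trivial triangle $\mathbf{R}\Gamma(\oO_X)\xrightarrow{\id}\mathbf{R}\Gamma(\oO_X)\to0$ to the triangle $\RHom_X(\iI_C,\iI_C)\to\RHom_X(\iI_C,\oO_X)\to\RHom_X(\iI_C,\oO_C)$ coming from the structure sequence, where the first two vertical maps are the trace inclusion and its composite with $\RHom_X(\iI_C,\iI_C)\to\RHom_X(\iI_C,\oO_X)$. That composite is precisely the map $\mathbf{R}\Gamma(\oO_X)\to\RHom_X(\iI_C,\oO_X)$ appearing when $\RHom_X(-,\oO_X)$ is applied to the structure sequence, whose cone is $\RHom_X(\oO_C,\oO_X)[1]\cong\BC[-3]$. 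Taking cones of the three vertical maps, the (infinite-dimensional) term $\mathbf{R}\Gamma(\oO_X)$ cancels and I obtain the exact triangle
\[
\RHom_X(\iI_C,\iI_C)_0\to\BC[-3]\to\RHom_X(\iI_C,\oO_C)\to\RHom_X(\iI_C,\iI_C)_0[1].
\]
Its long exact sequence, together with the computation of $\RHom_X(\iI_C,\oO_C)$ above, immediately gives $\Hom_X(\iI_C,\iI_C)_0=0$ and $\Ext^i_X(\iI_C,\iI_C)_0\cong\Ext^i_X(\oO_C,\oO_C)$ for $i=1,2$.

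Finally, I would deduce the cases $i=3,4$ from the $\TT$-equivariant self-duality $\Ext^i_X(\iI_C,\iI_C)_0\cong\Ext^{4-i}_X(\iI_C,\iI_C)_0^{\vee}$ of the compactly supported trace-free complex: combined with the cases $i=0,1$ this yields $\Ext^4_X(\iI_C,\iI_C)_0=0$ and $\Ext^3_X(\iI_C,\iI_C)_0\cong\Ext^1_X(\iI_C,\iI_C)_0^{\vee}$, which matches $\Ext^3_X(\oO_C,\oO_C)\cong\Ext^1_X(\oO_C,\oO_C)^{\vee}$ by Serre duality for $\oO_C$. I expect the main obstacle to be the careful, $\TT$-equivariant handling of Serre duality and of the trace decomposition in the non-compact setting—in particular checking that the cancellation of $\mathbf{R}\Gamma(\oO_X)$ in the nine-lemma is compatible with the equivariant structure—rather than any single cohomology computation.
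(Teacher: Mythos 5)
Your argument is correct and is essentially the paper's: the paper simply notes that $h^0(\oO_C)=1$, $h^1(\oO_C)=0$ and defers to the proof of \cite[Prop.~3.7]{Cao}, which carries out exactly the d\'evissage you describe (identifying $I_C$ with $\iI_C$, splitting off the trace, applying $\RHom(-,\oO_C)$ and $\RHom(-,\oO_X)$ to the structure sequence, and using Serre duality). Your write-up is just a self-contained version of that same computation.
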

\begin{proof}
Since $h^0(\oO_C)=1$, $h^1(\oO_C)=0$, the proof follows from the proof of \cite[Prop.~3.7]{Cao}.
\end{proof}
For a curve $C$ in class $\beta$ corresponding to a positive root, we have $h^0(\oO_C)=1$, $h^1(\oO_C)=0$. 
Hence the tautological complex $\oO_X^{[n]}$ restricts to 
\begin{equation} \label{eqn:tautrestr}
(\oO_X^{[n]}\otimes y^{-1})|_{I_C}\cong \BC\otimes y^{-1}.
\end{equation}
The computation of the stable pair invariants $P_{1,\beta}(X,\oO_X, y)$ reduces to the study of the weight decomposition of the torus representations $\Ext^i_X(\oO_C, \oO_C)$ for $i=1,2$. 

\begin{lem}\label{lemma: Ext of fourfolds and 3-folds}
Let $X=Y\times \BC$, where $Y$ is the Nakamura $G$-Hilbert scheme for $G<\mathrm{SO}(3)$, and let $C\subset Y$ be the Cohen-Macaulay curve in the class corresponding to a positive root. Then
\begin{align*}
    \Ext^1_X(\oO_C, \oO_C)&\cong \Ext^1_Y(\oO_C, \oO_C)\oplus \BC\otimes t_4^{-1},\\
        \Ext^2_X(\oO_C, \oO_C)&\cong \Ext^2_Y(\oO_C, \oO_C)\oplus  \Ext^2_Y(\oO_C, \oO_C)^*.
\end{align*}
Let $X=S\times \BC^2$, where $S$ is the Nakamura $G$-Hilbert scheme for $G<\mathrm{SU}(2)$, and let $C\subset S$ be the Cohen-Macaulay curve in the class corresponding to a positive root. Then
\begin{align*}
    \Ext^1_X(\oO_C, \oO_C)&\cong 
    \BC\otimes t_3^{-1}\oplus \BC\otimes t_4^{-1},\\
        \Ext^2_X(\oO_C, \oO_C)&\cong 
        \BC\otimes t_3^{-1}t_4^{-1}\oplus  \BC\otimes t_3t_4.
\end{align*}
\end{lem}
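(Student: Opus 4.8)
The plan is to compute both extension groups using the product structure $X = Y \times \BC$ (resp.~$X = S \times \BC^2$) and a Künneth-type decomposition for Ext groups, combined with Serre duality on the relevant ambient spaces. The key observation is that the Cohen-Macaulay curve $C$ lives inside the factor $Y$ (resp.~$S$), pulled back along the projection, so $\oO_C$ on $X$ is the external tensor product $\oO_C \boxtimes \oO_{\BC}$ (resp.~$\oO_C \boxtimes \oO_{\BC^2}$). This should let me express $\Ext^i_X(\oO_C,\oO_C)$ in terms of $\Ext^j_Y(\oO_C,\oO_C)$ tensored with the cohomology of the trivial bundle on the affine fibre, tracking the $\TT$-weights carefully.

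\smallskip

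\textbf{Case $G < \mathrm{SO}(3)$.} First I would invoke the Künneth spectral sequence (or directly the formula $\Ext^i_{Y \times \BC}(\oO_C \boxtimes \oO_{\BC}, \oO_C \boxtimes \oO_{\BC}) \cong \bigoplus_{a+b=i} \Ext^a_Y(\oO_C,\oO_C) \otimes \Ext^b_{\BC}(\oO_{\BC},\oO_{\BC})$), noting $\Ext^0_{\BC}(\oO_{\BC},\oO_{\BC}) = H^0(\oO_{\BC})$ and $\Ext^1_{\BC}(\oO_{\BC},\oO_{\BC}) = 0$ since $\BC$ is smooth affine of dimension $1$ and $\oO_{\BC}$ is locally free. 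The subtlety is the equivariant weight: the extra $\Ext$ direction coming from deforming $C$ in the $\BC$-fibre direction contributes a copy of $H^0(\oO_{\BC})$ with the weight $t_4^{-1}$ of the normal direction to the fibre over $0 \in \BC$, which is why $\Ext^1_X$ acquires the summand $\BC \otimes t_4^{-1}$. For $\Ext^2_X$, the Künneth decomposition gives $\Ext^2_Y(\oO_C,\oO_C) \otimes H^0(\oO_{\BC})$ plus $\Ext^1_Y(\oO_C,\oO_C) \otimes \Ext^1_{\BC}(\dots)$; the latter vanishes, so I need a second contribution. This comes from Serre duality on the Calabi-Yau fourfold: since $K_X \cong \oO_X$ but the equivariant structure twists by the product of weights, $\Ext^2_X(\oO_C,\oO_C) \cong \Ext^2_X(\oO_C,\oO_C)^* \otimes (t_1t_2t_3t_4)$, and on the subtorus $\TT_1 = \{t^3 t_4 = 1\}$ this becomes a genuine self-duality pairing. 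Combining the Künneth input $\Ext^2_Y(\oO_C,\oO_C)$ with its Serre dual $\Ext^2_Y(\oO_C,\oO_C)^*$ produces the stated decomposition.

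\smallskip

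\textbf{Case $G < \mathrm{SU}(2)$.} Here $C \subset S$ is a $(-2)$-curve (or chain of such) in the surface $S$, and $X = S \times \BC^2$. Since $C$ corresponds to a positive root, $\oO_C$ is a simple rigid sheaf on $S$: one has $\Ext^0_S(\oO_C,\oO_C) = \BC$ and $\Ext^1_S(\oO_C,\oO_C) = \Ext^2_S(\oO_C,\oO_C) = 0$ by the rigidity of the corresponding spherical object on the ADE resolution. Applying Künneth with the two extra affine directions $\BC^2$ (carrying weights $t_3, t_4$), the only surviving contributions come from tensoring the one-dimensional $\Ext^0_S$ with the Koszul-type cohomology $\Lambda^\bullet(t_3^{-1} \oplus t_4^{-1})$ of the trivial bundle on $\BC^2$; this is where the weights $t_3^{-1}, t_4^{-1}$ in $\Ext^1_X$ and $t_3^{-1}t_4^{-1}$ in $\Ext^2_X$ arise. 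The remaining summand $\BC \otimes t_3 t_4$ in $\Ext^2_X$ is forced by the Calabi-Yau Serre duality $\Ext^2_X \cong (\Ext^2_X)^* \otimes (t_1t_2t_3t_4)$, which on the subtorus $\TT_0 = \{t^2 t_3 t_4 = 1\}$ pairs the weight $t_3^{-1}t_4^{-1}$ with its dual $t_3 t_4$.

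\smallskip

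\textbf{The main obstacle} will be bookkeeping the equivariant weights precisely — in particular pinning down why the deformation in the fibre direction(s) carries the \emph{negative} weight $t_4^{-1}$ (resp.~$t_3^{-1}, t_4^{-1}$) rather than its inverse, and confirming that the Serre-duality twist restricts correctly on each subtorus so that the two halves of $\Ext^2_X$ are genuinely dual. I would anchor these sign/weight conventions by comparing against the normal bundle of $C$ in $X$: the obstruction-theory weights should match the weights of $N_{C/X}$ and its higher cohomology, and the self-duality of $\Ext^2_X$ reflects the $(2,2)$-form structure on the fourfold. For $G < \mathrm{SO}(3)$ the bulk of the three-fold input $\Ext^i_Y(\oO_C,\oO_C)$ is treated as a black box (computed by Bryan-Gholampour, as the excerpt notes), so the real work is isolating the extra $\BC$-fibre direction cleanly.
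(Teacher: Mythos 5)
Your identification of the sheaf is wrong, and it breaks the computation. You take $\oO_C$ on $X = Y\times\BC$ to be the external tensor product $\oO_C\boxtimes\oO_{\BC}$, i.e.\ the structure sheaf of $C\times\BC$. That sheaf has non-proper two-dimensional support, whereas the stable pair in question (Lemma \ref{lemma: fixed locus stable pairs} and Proposition \ref{prop:properfixloc}) is supported on the compact curve $C\subset Y\times\{0\}$: the relevant object is $i_*\oO_C$ for the zero section $i:Y\hookrightarrow Y\times\BC$, equivalently $\oO_C\boxtimes\oO_0$ with $\oO_0$ the skyscraper at $0\in\BC$. With your identification, the K\"unneth formula you write down yields $\Ext^i_X=\Ext^i_Y(\oO_C,\oO_C)\otimes H^0(\oO_{\BC})$ -- infinite-dimensional and with no $t_4^{-1}$ summand at all, since $\Ext^{>0}_{\BC}(\oO_{\BC},\oO_{\BC})=0$. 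The ``deformation in the fibre direction'' you then insert by hand is exactly the conormal contribution that a product sheaf $\oO_{C\times\BC}$ does not possess. The paper instead applies derived adjunction to the zero section: $\mathbf{L}i^*i_*\oO_C\cong\oO_C\oplus\oO_C\otimes t_4^{-1}[1]$, whence $\mathbf{R}\Hom_X(\oO_C,\oO_C)\cong\mathbf{R}\Hom_Y(\oO_C,\oO_C)\oplus\mathbf{R}\Hom_Y(\oO_C,\oO_C)[-1]\otimes t_4^{-1}$. This produces $\Ext^1_X=\Ext^1_Y\oplus\BC\otimes t_4^{-1}$ and $\Ext^2_X=\Ext^2_Y\oplus\Ext^1_Y\otimes t_4^{-1}$, and the second summand is then rewritten as $\Ext^2_Y(\oO_C,\oO_C)^*$ by equivariant Serre duality on the \emph{3-fold} $Y$. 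Your appeal to Serre duality on the 4-fold only imposes self-duality of $\Ext^2_X$; it is a consistency check, not a computation, and cannot identify the missing summand.

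In the $G<\mathrm{SU}(2)$ case there is a second error: you assert $\Ext^2_S(\oO_C,\oO_C)=0$ by sphericality. Since $K_S$ is trivial, Serre duality gives $\Ext^2_S(\oO_C,\oO_C)\cong\Hom_S(\oO_C,\oO_C)^*\cong\BC$, equivariantly $\BC\otimes t_3t_4$ (as $K_S\cong\oO_S\otimes t_1t_2$ and $t_1t_2t_3t_4=1$); this is precisely the source of the summand $\BC\otimes t_3t_4$ in $\Ext^2_X$, which your count would otherwise miss. The vanishing that is actually needed, and that the paper proves via Hirzebruch--Riemann--Roch ($\dim\Ext^1_S(\oO_C,\oO_C)=2+C^2=0$), is that of $\Ext^1_S$. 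Your Koszul factor $\Lambda^{\bullet}(t_3^{-1}\oplus t_4^{-1})$ is the right object, but it is $\mathbf{R}\Hom_{\BC^2}(\oO_0,\oO_0)$, not the Ext-algebra of $\oO_{\BC^2}$ -- the same skyscraper-versus-structure-sheaf confusion. Replacing $\oO_C\boxtimes\oO_{\BC^k}$ by $\oO_C\boxtimes\oO_0$ throughout, and using the correct $\Ext^{\bullet}_S$, turns your K\"unneth computation into the paper's adjunction argument.
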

\begin{proof}
Let $Y\hookrightarrow X$ be the inclusion of the zero section. By adjunction in the derived category, we have
\[\mathbf{R}\Hom_X(\oO_C, \oO_C)=\mathbf{R}\Hom_Y(\oO_C, \oO_C)\oplus \mathbf{R}\Hom_Y(\oO_C, \oO_C)[-1]\otimes t_4^{-1}.\]
Taking cohomology we obtain
\begin{align*}
    \Ext^1_X(\oO_C, \oO_C)&\cong \Ext^1_Y(\oO_C, \oO_C)\oplus \Hom_Y(\oO_C, \oO_C)\otimes t_4^{-1}
    \\&\cong \Ext^1_Y(\oO_C, \oO_C)\oplus \BC\otimes t_4^{-1},\\
        \Ext^2_X(\oO_C, \oO_C)&\cong \Ext^2_Y(\oO_C, \oO_C)\oplus  \Ext^1_Y(\oO_C, \oO_C)\otimes t_4^{-1}\\
        &\cong \Ext^2_Y(\oO_C, \oO_C)\oplus  \Ext^2_Y(\oO_C, \oO_C)^*,
\end{align*}
where in the last line we used $\TT$-equivariant Serre duality.\footnote{Although $Y$ is non-compact, we can still use $\TT$-equivariant Serre duality because $C$ has proper support. Specifically, taking a $\TT$-equivariant compactification $j : Y \hookrightarrow \overline{Y}$, we have $\TT$-equivariant isomorphisms $\Ext^i_Y(\oO_C, \oO_C) \cong \Ext^i_{\overline{Y}}(j_!\oO_C, j_!\oO_C) \cong \Ext^{3-i}_{\overline{Y}}(j_!\oO_C, (j_!\oO_C) \otimes K_{\overline{Y}})^* \cong \Ext^{3-i}_{Y}(\oO_C, \oO_C)^*$.} 

The second claim follows similarly by applying adjunction to the zero section $S\hookrightarrow X$, taking cohomology, exploiting equivariant Serre duality on $S$, and using the fact that $\Ext^1_S(\oO_C, \oO_C)=0$. In fact, by Hirzebruch-Riemann-Roch
\begin{align*}
    \dim \Ext^1_S(\oO_C, \oO_C)&=2+C^2=0,
\end{align*}
where we used that $C^2=-2$ for all curve classes corresponding to positive roots.
\end{proof}
We can explicitly determine the invariants $P_{1,\beta}(X,\oO_X, y)$.
\begin{prop}\label{prop: positive root SU(2)} Let $X=S\times \BC^2$, where $S\to \BC^2/G$ is the minimal resolution of singularities for a finite group $G< \mathrm{SU}(2)$. 
If $\beta$ corresponds to a positive root, then we have
\[P_{1,\beta}(X,\oO_X, y)= \pm \frac{[t_3 t_4][y]}{[t_3][t_4]}. \]
Otherwise, $P_{1,\beta}(X,\oO_X, y)=0$.
\end{prop}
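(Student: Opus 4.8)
The plan is to evaluate $P_{1,\beta}(X,\oO_X,y)$ directly through the $\TT$-equivariant localization formula \eqref{eqn: localization with []}, which is legitimate since by Proposition \ref{prop:properfixloc} the fixed locus is proper and the invariant is \emph{defined} by localization in \eqref{eqn: inv loc ordinary}. First I would dispose of the case when $\beta$ does not correspond to a positive root: by Lemma \ref{lemma: fixed locus stable pairs}(3) the fixed locus $P_1(X,\beta)^{\TT}$ is empty, so the localization definition immediately gives $P_{1,\beta}(X,\oO_X,y)=0$. For the remaining case, Lemma \ref{lemma: fixed locus stable pairs}(2) identifies $P_1(X,\beta)^{\TT}$ with the single reduced point $I_C=(\oO_X\to\oO_C)$, where $C$ is the Cohen--Macaulay curve in class $\beta$, so \eqref{eqn: localization with []} reduces the whole computation to the single contribution $\pm[-T^{\half}_{I_C}]\cdot V|_{I_C}$ with $V=\widehat{\Lambda}^{\mdot}(\oO_X^{[1]}\otimes y^{-1})$.

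Next I would compute the two ingredients of this term. For the insertion, \eqref{eqn:tautrestr} gives $(\oO_X^{[1]}\otimes y^{-1})|_{I_C}\cong\BC\otimes y^{-1}$, and a one-line unwinding of $\widehat{\Lambda}^{\mdot}(\cdot)=\Lambda^{\mdot}(\cdot)\otimes\det(\cdot)^{-1/2}$ on this single weight space yields $V|_{I_C}=[y]$. For the half of the virtual tangent space, I would use Lemma \ref{lem:compareExts} to replace $\Ext^i_X(I_C,I_C)_0$ by $\Ext^i_X(\oO_C,\oO_C)$ for $i=1,2,3$ (with $\Hom_0$ and $\Ext^4_0$ vanishing), and then feed in the weight decompositions of Lemma \ref{lemma: Ext of fourfolds and 3-folds} for $G<\mathrm{SU}(2)$, namely $\Ext^1_X(\oO_C,\oO_C)=\BC\, t_3^{-1}\oplus\BC\, t_4^{-1}$ and $\Ext^2_X(\oO_C,\oO_C)=\BC\, t_3^{-1}t_4^{-1}\oplus\BC\, t_3 t_4$. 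The remaining group $\Ext^3_X(\oO_C,\oO_C)\cong\Ext^1_X(\oO_C,\oO_C)^{*}=\BC\, t_3\oplus\BC\, t_4$ comes from $\TT$-equivariant Serre duality on $X$, which carries no weight because $\TT$ preserves the Calabi--Yau volume form. Assembling these gives
\[
T^{\vir}_{I_C}=(t_3^{-1}+t_4^{-1})-(t_3^{-1}t_4^{-1}+t_3 t_4)+(t_3+t_4),
\]
and $T^{\half}_{I_C}=t_3^{-1}+t_4^{-1}-t_3 t_4$ is a valid square root, since one checks $T^{\half}_{I_C}+\overline{T^{\half}_{I_C}}=T^{\vir}_{I_C}$.

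Finally, I would substitute into \eqref{eqn: localization with []}. Using $[t^{-1}]=-[t]$, the two sign flips from $[t_3^{-1}]$ and $[t_4^{-1}]$ cancel, so
\[
[-T^{\half}_{I_C}]=\frac{[t_3 t_4]}{[t_3^{-1}][t_4^{-1}]}=\frac{[t_3 t_4]}{[t_3][t_4]},
\]
whence $P_{1,\beta}(X,\oO_X,y)=\pm[-T^{\half}_{I_C}]\cdot[y]=\pm\frac{[t_3 t_4][y]}{[t_3][t_4]}$, as claimed. The genuine content of the argument lives entirely in the inputs I am assuming (the fixed-locus description and the Ext computations), so the assembly above is essentially bookkeeping. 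The one delicate point is the overall sign, which depends on the chosen orientation and on the square root $T^{\half}_{I_C}$; since the statement asserts the value only up to $\pm$, I would simply record that this ambiguity is exactly what the $\pm$ absorbs rather than attempt to pin it down.
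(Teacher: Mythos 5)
Your proposal is correct and follows essentially the same route as the paper: empty fixed locus for non-root classes via Lemma \ref{lemma: fixed locus stable pairs}, then a single-point localization using Lemmata \ref{lem:compareExts} and \ref{lemma: Ext of fourfolds and 3-folds} to produce a square root of the virtual tangent space. The only cosmetic difference is your choice of square root $t_3^{-1}+t_4^{-1}-t_3t_4$ versus the paper's $t_3+t_4-t_3t_4$; these differ by dualizing two summands, yield the same value of $[-T^{\half}]$, and any discrepancy is in any case absorbed by the overall $\pm$.
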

\begin{proof}
By Lemma \ref{lemma: fixed locus stable pairs}, invariants are zero if $\beta$ does not correspond to a positive root. If $\beta$ corresponds to a positive root, by Lemmata \ref{lem:compareExts} and \ref{lemma: Ext of fourfolds and 3-folds} 
\[\mathsf{V}:=t_3+t_4-t_3t_4\]
gives a square root of the virtual tangent space at $[I_C]\in P_{1}(X, \beta)^{\TT}$. Therefore the stable pair invariant is
\begin{equation*}
P_{1,\beta}(X,\oO_X, y)=\pm [-\mathsf{V}+y]=\pm \frac{[t_3 t_4][y]}{[t_3][t_4]}. \qedhere
\end{equation*}
\end{proof}

The finite abelian subgroups of $\mathrm{SU}(2)$ are the cyclic groups $\mathbb{Z}_r$. The finite abelian subgroups of $\mathrm{SO}(3)$ are the cyclic groups $\mathbb{Z}_r$ and $\mathbb{Z}_2 \times \mathbb{Z}_2$. 
In the case $\mathbb{Z}_2 \times \mathbb{Z}_2 < \mathrm{SO}(3)$, $f : S \to S_0 \subset Y$ contracts a single $\mathbb{P}^1$. The result is a configuration of three $\mathbb{P}^1$'s meeting mutually transversally in a point (i.e., locally, like the three coordinate axes of $\BC^3$) and each with normal bundle \cite[Rem.~25]{BG1}
$$
N_{\mathbb{P}^1/Y} \cong \mathcal{O}_{\mathbb{P}^1}(-1) \oplus \mathcal{O}_{\mathbb{P}^1}(-1).
$$
We denote the corresponding curve classes in $H_2(Y,\mathbb{Z})$ by $\beta_{01}, \beta_{10}, \beta_{11}$ (this notation is motivated by the next section).
\begin{rmk} \label{rmk: noZrinSO(3)}
  Note that any cyclic group $\mathbb{Z}_r < \mathrm{SO}(3)<\mathrm{SU}(3)$ is unitarily conjugate to a cyclic group $\mathbb{Z}_r < \mathrm{SU}(2) < \mathrm{SU}(3)$. Hence, we do not need to consider the cyclic subgroups of $\mathrm{SO}(3)$ separately. 
\end{rmk}
\begin{prop} 
Let $X=Y\times \BC$, where $Y$ is the Nakamura $G$-Hilbert scheme for $G< \mathrm{SO}(3)$. 
If $G=\mathbb{Z}_2\times \mathbb{Z}_2$, then on the curve classes corresponding to positive roots we have
\begin{align*}
& P_{1,\beta_{01}}(X,\oO_X, y)=P_{1,\beta_{10}}(X,\oO_X, y)=P_{1,\beta_{11}}(X,\oO_X, y)=  P_{1,\beta_{01}+\beta_{10}+\beta_{11}}(X,\oO_X, y)= \pm \frac{[y]}{[t_4]},\\
 &P_{1,\beta_{01}+\beta_{10}}(X,\oO_X, y)= \pm \frac{[t_1t_2t_3^{-1}][y]}{[t_3^2][t_4]}, \quad P_{1,\beta_{10}+\beta_{11}}(X,\oO_X, y)= \pm \frac{[t_1t_2^{-1}t_3][y]}{[t_2^2][t_4]}, \\
 &P_{1,\beta_{01}+\beta_{11}}(X,\oO_X, y)= \pm \frac{[t_1^{-1}t_2t_3][y]}{[t_1^2][t_4]}.
\end{align*}
\end{prop}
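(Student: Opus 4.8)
The plan is to treat each of the seven positive-root classes by the localization argument already used for Proposition~\ref{prop: positive root SU(2)}. By Lemma~\ref{lemma: fixed locus stable pairs} the fixed locus $P_1(X,\beta)^{\TT}$ is a single reduced point $I_C=(\oO_X\to\oO_C)$, where $C$ is the Cohen-Macaulay curve in class $\beta$: a single $(-1,-1)$-curve $\bar E_i$ for $\beta\in\{\beta_{01},\beta_{10},\beta_{11}\}$, a transverse union of two of the three axes for the pairwise sums, and the union $\bar E_1\cup\bar E_2\cup\bar E_3$ of all three for $\beta_{01}+\beta_{10}+\beta_{11}$. By Lemma~\ref{lem:compareExts}, $\TT$-equivariant Serre duality on the Calabi-Yau $4$-fold $X$ (whose canonical bundle is equivariantly trivial), and Lemma~\ref{lemma: Ext of fourfolds and 3-folds}, the virtual tangent space at $I_C$ admits the square root
\[
T^{\half}=\Ext^1_X(\oO_C,\oO_C)-\Ext^2_Y(\oO_C,\oO_C)=\Ext^1_Y(\oO_C,\oO_C)+\BC\otimes t_4^{-1}-\Ext^2_Y(\oO_C,\oO_C).
\]
Combining the restriction \eqref{eqn:tautrestr} of the tautological complex with the localization formula \eqref{eqn: localization with []}, the invariant becomes $P_{1,\beta}(X,\oO_X,y)=\pm[y-T^{\half}]$, so the whole problem reduces to identifying the two $\TT$-representations $\Ext^1_Y(\oO_C,\oO_C)$ and $\Ext^2_Y(\oO_C,\oO_C)$ on the toric Calabi-Yau $3$-fold $Y$.

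For the single curves this is immediate: a $(-1,-1)$-curve is infinitesimally rigid, with $\Ext^1_Y(\oO_C,\oO_C)=\Ext^2_Y(\oO_C,\oO_C)=0$ by a standard local-to-global computation from $N_{C/Y}\cong\oO_{\mathbb{P}^1}(-1)^{\oplus2}$ and $\Lambda^2N_{C/Y}\cong\oO_{\mathbb{P}^1}(-2)$; hence $T^{\half}=t_4^{-1}$ and $\pm[y-t_4^{-1}]=\pm[y]/[t_4]$. The substance lies in the unions, and here I would compute $\mathbf{R}\Hom_Y(\oO_C,\oO_C)$ from the normalization sequence $0\to\oO_C\to\bigoplus_i\oO_{\bar E_i}\to\oO_p\to 0$. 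This expresses the answer through the (rigid) self-$\Ext$ of each $\bar E_i$, the cross terms $\Ext^\bullet_Y(\oO_{\bar E_i},\oO_{\bar E_j})$ of two transverse curves, and the local terms at the triple point $p$, all of which are $\TT$-equivariant and supported either on the rigid $\mathbb{P}^1$'s or at the torus-fixed point $p$ (with local coordinates of weights $t_1,t_2,t_3$). Because $C$ is connected with $h^0(\oO_C)=1$ and $Y$ is Calabi-Yau $3$, one has $\chi_Y(\oO_C,\oO_C)=0$, whence $\mathrm{ext}^1_Y=\mathrm{ext}^2_Y$, so it remains only to pin these spaces down as representations. This deformation-theoretic bookkeeping for the curves on $Y$ is exactly what Bryan-Gholampour \cite{BG2} carry out, and I would appeal to their analysis (equivalently, to a direct Koszul/\v{C}ech computation at $p$) to extract the weights.

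For a pairwise union — say the class $\beta_{01}+\beta_{10}$, whose two branches are the axes of weights $t_1,t_2$ and whose excluded axis has weight $t_3$ — the expected output is a one-dimensional deformation space $\Ext^1_Y(\oO_C,\oO_C)$ concentrated at the node, together with its Serre-dual one-dimensional obstruction space $\Ext^2_Y(\oO_C,\oO_C)$; their weights assemble into $T^{\half}=t_4^{-1}+\Ext^1_Y(\oO_C,\oO_C)-\Ext^2_Y(\oO_C,\oO_C)$ and give
\[
P_{1,\beta_{01}+\beta_{10}}(X,\oO_X,y)=\pm[y-T^{\half}]=\pm\frac{[t_1t_2t_3^{-1}][y]}{[t_3^2][t_4]}.
\]
The other two pairwise formulas then follow from the $S_3$-symmetry permuting the three axes (equivalently $t_1,t_2,t_3$). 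For the triple union the same normalization-sequence bookkeeping, constrained by this $S_3$-symmetry and by Serre duality interchanging $\Ext^1_Y$ and $\Ext^2_Y$, should force $\Ext^1_Y(\oO_C,\oO_C)\cong\Ext^2_Y(\oO_C,\oO_C)$ as $\TT$-representations; these cancel in $T^{\half}$, leaving $T^{\half}=t_4^{-1}$ and recovering $\pm[y]/[t_4]$ exactly as for a single curve.

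The hard part will be the equivariant bookkeeping at the triple point $p$. Because the Cohen-Macaulay curves in the union cases are singular, one cannot use $\mathcal{E}xt^q=\Lambda^q N$, and one must track the precise $\TT$-weights of the node-smoothing deformations and their obstructions, including the cross terms $\Ext^\bullet_Y(\oO_{\bar E_i},\oO_{\bar E_j})$ and the $\oO_p$-contribution in the normalization sequence. Matching these weights against the global $\mathbb{P}^1$-geometry of each $\bar E_i$ — rather than merely counting dimensions — is the crux, and it is here that the reduction to the explicit computation of Bryan-Gholampour \cite{BG2} does the real work.
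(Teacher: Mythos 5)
Your reduction to the square root $T^{\half}=\Ext^1_Y(\oO_C,\oO_C)+\BC\otimes t_4^{-1}-\Ext^2_Y(\oO_C,\oO_C)$ and the three single-curve cases are fine, but the multi-component cases --- which are the entire content of the proposition --- are not actually proved. For a two-component curve you assert that the node contributes a one-dimensional $\Ext^1_Y$ whose weight, together with its Serre dual, ``assembles into'' the stated formula, and for the triple union that $S_3$-symmetry and Serre duality ``should force'' $\Ext^1_Y\cong\Ext^2_Y$; in neither case is a weight computed. The symmetry argument is logically insufficient: Serre duality gives $\Ext^2_Y(\oO_C,\oO_C)\cong\Ext^1_Y(\oO_C,\oO_C)^*\otimes t_4^{-1}$, and an $S_3$-invariant representation such as $t_1^2+t_2^2+t_3^2$ is compatible with both constraints without coinciding with its Serre dual, so nothing forces the cancellation you need. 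Moreover, the authority you defer to cannot close the gap: the Bryan--Gholampour analysis \cite{BG2} is carried out for the one-parameter diagonal torus (the only torus available for general polyhedral $G$; compare Proposition~\ref{prop: ico}, whose answers involve only $t$ and $t_4$), whereas the formulas here are refined with respect to the full rank-three Calabi--Yau torus and distinguish $t_1,t_2,t_3$. At best \cite{BG2} verifies the specialization $t_1=t_2=t_3$. Finally, the reducedness of the fixed point is not part of Lemma~\ref{lemma: fixed locus stable pairs}, which is a statement about closed points only; it requires the vanishing of the $\TT$-fixed part of $\Ext^1_X(\oO_C,\oO_C)$, i.e.\ exactly the weight computation you have not done.

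The paper's own proof takes a different and much shorter route: since $\BZ_2\times\BZ_2$ is abelian, $X=Y\times\BC$ is a \emph{toric} Calabi--Yau 4-fold, so the invariants follow from a direct application of the vertex/edge formalism of \cite{CKM, CK2}, with \cite[Prop.~2.6]{CK2} supplying both the reducedness of the fixed point and the full $(t_1,t_2,t_3,t_4)$-weight decomposition. Your normalization-sequence strategy could in principle be completed by an explicit equivariant computation at the central toric fixed point of $Y$ (whose tangent weights are $t_1t_2t_3^{-1}$, $t_1t_2^{-1}t_3$, $t_1^{-1}t_2t_3$), but as written the crux is deferred rather than carried out.
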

\begin{proof}
Since $G=\mathbb{Z}_2\times \mathbb{Z}_2$ is abelian, $X = Y \times \C$ is a toric  Calabi-Yau 4-fold. Hence the invariants can be calculated by a straight-forward application of the vertex/edge formalism of \cite{CKM, CK2}. In particular, by \cite[Prop.~2.6]{CK2}, the Zariski tangent space $\Ext^1_X(I_C,I_C)_0 \cong \Ext^1_X(\oO_C,\oO_C)$ has no $\TT$-fixed part and the equality in Lemma \ref{lemma: fixed locus stable pairs}(2) holds scheme-theoretically.
\end{proof}
\begin{rmk}
Let $X\to \BC^4/G$ be the crepant resolution of a finite subgroup $G< \mathrm{SO}(3)  < \mathrm{SU}(4)$ and let $\beta$ correspond to a positive root. When $G$ is abelian, $P_{n}(X, \beta)^\TT\cong \{(\oO_X\to \oO_C)\}$  holds scheme-theoretically, where $C$ is the curve in class $\beta$ \cite[Prop.~2.6]{CK2}. In the non-abelian case,  proving that the fixed point is reduced is equivalent to showing that $\Ext^1_X(\oO_C, \oO_C)^\TT=0$. Assuming this vanishing,  one can compute $P_{1, \beta}(X,\oO_X, y)$ by a similar analysis as in Proposition \ref{prop: positive root SU(2)}. In particular, in \cite[\S 3]{BG2}, Bryan-Gholampour have developed a method to determine the weight decomposition of $\Ext^1_Y(\oO_C, \oO_C)-\Ext^2_Y(\oO_C, \oO_C)$ for all finite $G< \mathrm{SO}(3)$. We illustrate this for the case $G=A_5<\mathrm{SO}(3)$ is the symmetry group of the icosahedron in Proposition \ref{prop: ico}. By applying the dimensional reduction and cohomological limit discussed in \S \ref{sec: Limit of theory}, we obtain invariants which coincide with the Gopakumar-Vafa invariants of crepant resolutions in \cite[Thm.~1.2]{BG2}. The invariants in loc.~cit.~were defined using moduli spaces of 1-dimensional stable sheaves.
\end{rmk}
\begin{prop} \label{prop: ico}
Let $X=Y\times \BC$, where $Y$ is the Nakamura $G$-Hilbert scheme for  $G=A_5<\mathrm{SO}(3)$  the symmetry group of the icosahedron, and assume that $ \Ext^1_Y(\oO_C, \oO_C)^\TT=0$ for $C$ the Cohen-Macaulay curve corresponding to a  positive root. Then on the curve classes corresponding to positive roots, we have
\begin{align*}
& P_{1,3\beta_1+5\beta_2+4\beta_3+3\beta_4}(X,\oO_X, y)= \pm \frac{[y]}{[t_4]}, \quad  P_{1,2\beta_1+4\beta_2+4\beta_3+2\beta_4}(X,\oO_X, y)= \pm \frac{[t][y]}{[t^2][t_4]} ,\\
 &P_{1,2\beta_1+4\beta_2+3\beta_3+2\beta_4}(X,\oO_X, y)= \pm \frac{[t^2][y]}{[t][t_4]} , \quad P_{1,\beta_1+2\beta_2+2\beta_3+\beta_4}(X,\oO_X, y)= \pm \frac{[t^2]^2[y]}{[t]^2[t_4]},
\end{align*}
where $\beta_1, \beta_2, \beta_3, \beta_4 $ are the classes of the irreducible components of the exceptional locus, using the notation of \cite[Prop.~3.1]{BG2}.
\end{prop}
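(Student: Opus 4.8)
The plan is to mirror the localization strategy used in the proof of Proposition \ref{prop: positive root SU(2)}, importing the $\TT$-weight decomposition on the threefold $Y$ from Bryan-Gholampour. First I would invoke Lemma \ref{lemma: fixed locus stable pairs}: each of the four listed classes $\beta$ corresponds to a positive root of the icosahedral root system, so $P_1(X,\beta)^{\TT}$ consists of the single point $[I_C]=(\oO_X\to\oO_C)$, with $C$ the associated Cohen-Macaulay curve. Under the standing hypothesis $\Ext^1_Y(\oO_C,\oO_C)^{\TT}=0$, Lemma \ref{lemma: Ext of fourfolds and 3-folds} gives $\Ext^1_X(\oO_C,\oO_C)\cong\Ext^1_Y(\oO_C,\oO_C)\oplus\BC\otimes t_4^{-1}$; since $t_4=t^{-3}$ is a nontrivial character of $\TT=\TT_1=\{t^3t_4=1\}$, the extra summand has no fixed part, so by Lemma \ref{lem:compareExts} the Zariski tangent space $\Ext^1_X(I_C,I_C)_0\cong\Ext^1_X(\oO_C,\oO_C)$ has vanishing $\TT$-fixed part. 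Hence the fixed point is isolated and reduced, which places us in the setting of the localization formula \eqref{eqn: localization with []}.

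Next I would extract a square root of the virtual tangent space. By Lemma \ref{lem:compareExts} the trace-free $\Hom$ and $\Ext^4$ vanish, so $T^{\vir}_{P,[I_C]}=\Ext^1_X(\oO_C,\oO_C)-\Ext^2_X(\oO_C,\oO_C)+\Ext^3_X(\oO_C,\oO_C)$. Lemma \ref{lemma: Ext of fourfolds and 3-folds} splits the middle term as $\Ext^2_Y(\oO_C,\oO_C)\oplus\Ext^2_Y(\oO_C,\oO_C)^{*}$, furnishing a maximal isotropic half, while $\TT$-equivariant Serre duality on the Calabi-Yau fourfold identifies $\Ext^3_X$ with $(\Ext^1_X)^{*}$. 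Thus
\[
T^{\half}=\Ext^1_Y(\oO_C,\oO_C)+t_4^{-1}-\Ext^2_Y(\oO_C,\oO_C)
\]
is a square root of $T^{\vir}_{P,[I_C]}$ (any choice of half gives the same answer up to the overall sign). Using the restriction \eqref{eqn:tautrestr}, the Nekrasov insertion restricts to $[y]$ at the fixed point, so \eqref{eqn: localization with []} yields
\[
P_{1,\beta}(X,\oO_X,y)=\pm[-T^{\half}]\,[y]=\pm\frac{[\Ext^2_Y(\oO_C,\oO_C)]\,[y]}{[\Ext^1_Y(\oO_C,\oO_C)]\,[t_4]},
\]
where $[t_4^{-1}]=-[t_4]$ has been absorbed into the sign. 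This reduces the problem entirely to the virtual $\TT$-representation $\Ext^1_Y(\oO_C,\oO_C)-\Ext^2_Y(\oO_C,\oO_C)$ on the \emph{threefold} $Y$.

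The remaining and principal task is the explicit $\TT$-weight decomposition of $\Ext^1_Y(\oO_C,\oO_C)-\Ext^2_Y(\oO_C,\oO_C)$ for each of the four classes $C$. Since $\TT=\TT_1$ acts diagonally on the base $\BC^3$ by $t$, all weights occurring on $Y$ are powers of $t$, which is what produces the brackets $[t]$ and $[t^2]$ in the stated formulae. I would carry out this computation by the method of Bryan-Gholampour \cite[\S 3]{BG2}, who determine precisely this weight decomposition for every Cohen-Macaulay curve attached to a positive root of a polyhedral group: one writes $\beta=\sum_i m_i[C_i]$ in terms of the irreducible exceptional curves $C_i$ with scheme structure $I_{C_i}^{m_i}$, records the $\mathbb{P}^1$-configuration and normal-bundle data of the components from the reduced McKay quiver of $\widehat{A_5}$ (in the Boissi\`ere-Sarti picture of \S \ref{sec: BS}, distinguishing binary from non-binary roots), and assembles the global $\Ext$ groups from the local contributions. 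Feeding the resulting weight data for the four classes into the displayed formula produces the four expressions.

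I expect this last step to be the main obstacle. The localization bookkeeping is routine once reducedness is in hand, but translating Bryan-Gholampour's computation --- phrased in terms of moduli of one-dimensional stable sheaves rather than $K$-theoretic weights --- into the precise $\TT$-equivariant $\Ext$-weight decomposition for the icosahedral group requires careful tracking of the representation theory of $A_5$ and of which simple roots are contracted by $f:S\to S_0\subset Y$. The delicate consistency check is that the hypothesis $\Ext^1_Y(\oO_C,\oO_C)^{\TT}=0$ used for reducedness should be visible from this same weight data, so that the two ingredients are compatible for each of the four classes.
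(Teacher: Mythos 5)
Your proposal is correct and follows essentially the same route as the paper's proof: the same square root $\mathsf{V}=\Ext^1_Y(\oO_C,\oO_C)-\Ext^2_Y(\oO_C,\oO_C)+t_4^{-1}$ of the virtual tangent space, the same localization formula \eqref{eqn: localization with []}, and the same reliance on the weight decomposition of \cite[Prop.~3.1]{BG2}. The paper's proof is a two-line version of yours; your extra discussion of reducedness and of the tautological insertion is implicit in the surrounding remark and in Proposition \ref{prop: positive root SU(2)}, and the weight data you defer to Bryan--Gholampour is exactly what their Proposition 3.1 supplies directly.
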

\begin{proof}
    For $G=A_5$, by Lemmata \ref{lem:compareExts} and \ref{lemma: Ext of fourfolds and 3-folds} 
\[\mathsf{V}=\Ext^1_Y(\oO_C, \oO_C)-\Ext^2_Y(\oO_C, \oO_C)+t_4^{-1}\]
gives a square root of the virtual tangent space at $I_C\in P_{1}(X, \beta)^{\TT}$. The result follows from the weight decomposition computed in \cite[Prop.~3.1]{BG2}.
\end{proof}

For $n=0,1$, Conjecture \ref{conj: PT of G non-abelian} states 
$$
P_{n,\beta}(X,\mathcal{O}_X,y) = \left\{\begin{array}{cc} 0 & \textrm{if \ } n=0,  \\ P_{1,\beta}(X,\mathcal{O}_X,t_4) \frac{[y]}{[t_4]} &  \textrm{if \ } n=1.  \end{array}\right.
$$
We prove this prediction.
\begin{prop} \label{cor:n=0,1}
Conjecture \ref{conj: PT of G non-abelian} holds for $n=0,1$.
\end{prop}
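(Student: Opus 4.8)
The plan is to prove the two displayed formulae by comparing the coefficients of $q^0$ and $q^1$ on the two sides of Conjecture \ref{conj: PT of G non-abelian}. First I would extract these coefficients from the right-hand side by expanding the plethystic exponential \eqref{equ on ple}. The essential computation is to simplify the $q$-dependent denominator: using $[x]=x^{1/2}-x^{-1/2}$ one finds
\[
[y^{\frac12}q]\,[y^{\frac12}q^{-1}] = y^{\frac12}+y^{-\frac12}-q-q^{-1} = -q^{-1}(q-y^{\frac12})(q-y^{-\frac12}),
\]
and hence, expanding as a power series in $q$,
\[
\frac{1}{[y^{\frac12}q]\,[y^{\frac12}q^{-1}]} = -\sum_{m\geqslant 1}\frac{[y^m]}{[y]}\,q^m .
\]
Substituting this into the argument of $\mathrm{Exp}(\cdot)$, the factor $[y]$ cancels and the exponent becomes
\[
F = \sum_{\beta>0}\sum_{m\geqslant 1}\frac{P_{1,\beta}(X,\oO_X,t_4)}{[t_4]}\,[y^m]\,Q^{\beta}q^m,
\]
a power series in $q$ whose lowest order is $q^1$.

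Because $F$ has no $q^0$ term, and the $n$-th summand of the plethystic exponent replaces $q$ by $q^n$, the exponent $\sum_n\frac1n F(\,\cdot^n)$ also starts in $q$-degree $1$; therefore $\mathrm{Exp}(F)\equiv 1\pmod q$. Comparing with $\mathcal{Z}^{\PT}_{X,\oO_X}=1+\sum_{\beta>0,n}P_{n,\beta}(X,\oO_X,y)\,Q^{\beta}q^n$, this forces the right-hand side to predict $P_{0,\beta}=0$ for all $\beta>0$. In $q$-degree $1$ only the first summand ($n=1$) of the exponent contributes, and only through the linear term of $\exp$, since $F^2$ starts in $q$-degree $2$; hence the coefficient of $q^1Q^\beta$ in $\mathrm{Exp}(F)$ equals that of $F$, namely $P_{1,\beta}(X,\oO_X,t_4)\,[y]/[t_4]$. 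Thus the right-hand side predicts exactly the two formulae stated before the proposition.

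It then remains to match these predictions with the actual invariants. The vanishing $P_{0,\beta}=0$ is immediate from Lemma \ref{lemma: fixed locus stable pairs}(1): since $P_0(X,\beta)^{\TT}=\varnothing$, the localization sum \eqref{eqn: localization with []} is empty. For $n=1$ I would argue uniformly via localization rather than case by case. If $\beta$ does not correspond to a positive root then $P_1(X,\beta)^{\TT}=\varnothing$ by Lemma \ref{lemma: fixed locus stable pairs}(3) and both sides vanish. If $\beta$ corresponds to a positive root, the fixed locus is the single reduced point $I_C$, and \eqref{eqn: localization with []} gives
\[
P_{1,\beta}(X,\oO_X,y) = \pm\,[-T^{\half}_{I_C}]\cdot \widehat{\Lambda}^{\mdot}(\oO_X^{[1]}\otimes y^{-1})\big|_{I_C}.
\]
By \eqref{eqn:tautrestr} the tautological insertion restricts to $\widehat{\Lambda}^{\mdot}(\BC\otimes y^{-1})=[y]$, while the square root $[-T^{\half}_{I_C}]$ of the virtual tangent space depends only on the equivariant parameters and not on $y$. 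Hence $P_{1,\beta}(X,\oO_X,y)=c_\beta\,[y]$ with $c_\beta$ independent of $y$; specializing $y=t_4$ identifies $c_\beta=P_{1,\beta}(X,\oO_X,t_4)/[t_4]$, which is precisely the $n=1$ prediction. As a consistency check this reproduces the explicit values of Propositions \ref{prop: positive root SU(2)} and \ref{prop: ico}.

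I expect no serious obstacle here: the content is the $q$-series manipulation of the plethystic exponential together with the observation that the $y$-dependence of $P_{1,\beta}$ factors through the single monomial insertion $[y]$. The only point requiring care is the bookkeeping that shows no higher summand of the plethystic exponent, nor any higher power of the exponent under $\exp$, contributes in $q$-degrees $0$ and $1$.
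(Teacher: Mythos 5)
Your reduction of the conjecture to the two predictions $P_{0,\beta}=0$ and $P_{1,\beta}(X,\oO_X,y)=P_{1,\beta}(X,\oO_X,t_4)\,[y]/[t_4]$ via the expansion $\tfrac{1}{[y^{1/2}q][y^{1/2}q^{-1}]}=-\sum_{m\geqslant 1}\tfrac{[y^m]}{[y]}q^m$ is correct (the paper states this expansion without proof just before the proposition), the $n=0$ case is handled exactly as in the paper, and your key idea for $n=1$ --- that the only $y$-dependence enters through the restriction \eqref{eqn:tautrestr} of the tautological complex, so that $P_{1,\beta}(X,\oO_X,y)/[y]$ is independent of $y$ and can be read off at $y=t_4$ --- is precisely the mechanism the paper uses.

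There is, however, a genuine gap in how you justify that mechanism. You assert that for $\beta$ a positive root ``the fixed locus is the single reduced point $I_C$'' and then apply \eqref{eqn: localization with []}, which is only valid for $0$-dimensional \emph{reduced} fixed loci. Lemma \ref{lemma: fixed locus stable pairs}(2) identifies $P_1(X,\beta)^{\TT}$ with $\{I_C\}$ only at the level of closed points; reducedness is equivalent to $\Ext^1_X(\oO_C,\oO_C)^{\TT}=0$, which holds for $G<\mathrm{SU}(2)$ (Lemma \ref{lemma: Ext of fourfolds and 3-folds}) and for abelian $G$ (\cite[Prop.~2.6]{CK2}), but is an unproven hypothesis for non-abelian $G<\mathrm{SO}(3)$ --- indeed the paper explicitly assumes $\Ext^1_Y(\oO_C,\oO_C)^{\TT}=0$ in Proposition \ref{prop: ico} and flags it as open in the preceding remark. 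Since Proposition \ref{cor:n=0,1} covers, e.g., $G=A_5<\mathrm{SO}(3)$, your argument as written does not apply there. The paper circumvents this by running the Oh--Thomas virtual localization and virtual Riemann--Roch on the possibly non-reduced fixed locus: one has $[P^{\TT}]^{\vir}=\mu\cdot\{I_C\}$ for some $\mu\in\BZ[\tfrac12]$ because the virtual dimension of the induced obstruction theory on $P^{\TT}$ is $\dim\Ext^1(\oO_C,\oO_C)^{\TT}-\tfrac12\dim\Ext^2(\oO_C,\oO_C)^{\TT}=0$ (for $G<\mathrm{SO}(3)$ this only needs the vanishing of the $\TT$-fixed part of the \emph{virtual} representation $\Ext^1_Y-\Ext^2_Y$, which follows from \cite[\S 3]{BG2}, not the stronger pointwise vanishing of $\Ext^1_Y{}^{\TT}$), and the $y$-dependence of the resulting integral over $\mu\cdot\{I_C\}$ still factors as the single multiplicative term $[y]$. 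If you replace your appeal to \eqref{eqn: localization with []} by this virtual-class argument, the rest of your proof goes through unchanged.
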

\begin{proof}
By Lemma \ref{lemma: fixed locus stable pairs}, we only need to consider the case $n=1$ and $\beta$ corresponding to a positive root. Let $P:=P_{1}(X,\beta)$ and consider the virtual class $[P^{\TT}]^{\vir} \in A_*(P^{\TT},\mathbb{Z}[\tfrac{1}{2}]) \cong A_*(\{I_C\},\mathbb{Z}[\tfrac{1}{2}]) \cong \mathbb{Z}[\tfrac{1}{2}]\cdot \{I_C\}$ (for any choice of orientation). By Lemma \ref{lem:compareExts}, we have $[P^{\TT}]^{\vir} \in A_v(P^{\TT},\mathbb{Z}[\tfrac{1}{2}])$ where
$$
v = \mathrm{dim}\Ext_X^{1}(I_C,I_C)_0^{\TT} - \frac{1}{2} \mathrm{dim}\Ext_X^{2}(I_C,I_C)_0^{\TT} = \mathrm{dim}\Ext_X^{1}(\oO_C,\oO_C)^{\TT} - \frac{1}{2} \mathrm{dim}\Ext_X^{2}(\oO_C,\oO_C)^{\TT}.
$$
In the case $G < \mathrm{SU}(2)$, this equals zero by Lemma \ref{lemma: Ext of fourfolds and 3-folds}. In the case $G < \mathrm{SO}(3)$ this also equals zero by Lemma \ref{lemma: Ext of fourfolds and 3-folds} and \cite[Sect.~3]{BG2}. In either case, $[P^{\TT}]^{\mathrm{vir}} = \mu \cdot \{I_C\}$ for some $\mu \in \mathbb{Z}[\tfrac{1}{2}]$. By \cite{OT} and \eqref{eqn:tautrestr}, we have 
$$
\chi\left(P^\TT,\frac{\widehat{\oO}^{\vir}_{P^\TT}}{\sqrt{\mathfrak{e}^{\TT}}(N^{\vir})} \otimes \widehat{\Lambda}^{\mdot} (\oO_X^{[1]}|_{P^\TT} \otimes y^{-1}) \right) = \int_{\mu \cdot \{I_C\}}  \frac{\mathrm{ch}(\widehat{\Lambda}^{\mdot} (\mathbb{C} \otimes y^{-1}))}{\sqrt{e}(N^{\mathrm{vir}}| _{I_C})} \sqrt{\mathrm{td}}(T_{P}^{\mathrm{vir}}|_{I_C}),
$$
where $\mathrm{ch}(\widehat{\Lambda}^{\mdot} (\mathbb{C} \otimes y^{-1})) = [y]$ and no other term on the right-hand side depends on $y$. 
\end{proof}

\subsubsection{Vertex formalism}

In \cite{CKM}, we developed a vertex/edge formalism for computing stable pair invariants of toric Calabi-Yau 4-folds. This allows us to prove Conjecture \ref{conj: PT of G non-abelian} for $G$ abelian in many more cases.

\begin{prop}\label{prop: vertex formalism Z_r}
Let $G=\BZ_{r}<\mathrm{SU}(2)$. Then Conjecture \ref{conj: PT of G non-abelian} holds in the following cases.
\begin{itemize}
    \item For $n=0,1$ and all $\beta$.
    \item For $\beta$ is irreducible and all $n \geqslant 0$, $r\geqslant  2$.
    \item For $r=2$ and 
\begin{itemize}
\item $\beta=2\beta_1$ modulo $q^6$, 
\item $\beta=3\beta_1$ modulo $q^6$, 
\item $\beta=4\beta_1$ modulo $q^6$. 
\end{itemize}
\item For $r=3$ and
\begin{itemize}
\item $\beta = d \beta_1, d \beta_2$ with $d = 2,3,4$ modulo $q^6$, 
\item $\beta = \beta_1 + \beta_2$ modulo $q^6$, 
\item $\beta = \beta_1 + 2\beta_2, 2\beta_1 + \beta_2$ modulo $q^5$, 
\item $\beta = \beta_1 + 3\beta_2, 3\beta_1 + \beta_2$ modulo $q^5$, 
\item $\beta = 2\beta_1 + 2\beta_2$ modulo $q^4$.
\end{itemize}
\end{itemize}
Here $\beta_i$, for $i=1, \ldots, r-1$, are the curve classes of irreducible components of the exceptional divisor of the minimal resolution $S \to \mathbb{C}^2/G$. 
\end{prop}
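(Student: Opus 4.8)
The plan is to separate the three kinds of assertions and dispatch each by the toric vertex/edge formalism of \cite{CKM, CK2}. Throughout, $G=\BZ_r<\mathrm{SU}(2)$ gives $X=S\times\BC^2$ with $S\to\BC^2/\BZ_r$ the minimal resolution of the $A_{r-1}$ singularity, a smooth toric Calabi-Yau $4$-fold with torus $\TT=\TT_2=\{t_1t_2t_3t_4=1\}$. Its toric web is a chain of $r$ maximal cones joined by $r-1$ compact edges, the $i$-th being the $(-2)$-curve $C_i$ in class $\beta_i$, while the two factors of $\BC^2$ supply the trivial normal directions carrying weights $t_3,t_4$. The first bullet (all $\beta$, $n=0,1$) is precisely Proposition \ref{cor:n=0,1}, so nothing new is required there.

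For the second bullet, irreducibility forces $\beta=\beta_i$ for a single $(-2)$-curve, whose normal bundle in $X$ is $\oO_{\PP^1}(-2)\oplus\oO_{\PP^1}\oplus\oO_{\PP^1}$. First I would note that every $\TT$-fixed stable pair of class $\beta_i$ has proper support contained in the exceptional locus (Proposition \ref{prop:properfixloc}), hence is set-theoretically supported on the unique representative $C_i$; its fixed structure is then governed by the two vertex contributions at the torus-fixed points $0,\infty\in C_i$ glued along the single edge, as in \cite{CKM}. Summing over all such fixed points $Z$ and all $n\geqslant 0$ the local contributions $\pm[-T^{\half}_Z]\cdot\widehat{\Lambda}^{\mdot}(\oO_X^{[n]}|_Z\otimes y^{-1})$ from \eqref{eqn: localization with []}, where $T^{\half}_Z$ is the half virtual tangent space extracted from the vertex/edge weights, yields $\sum_{n}P_{n,\beta_i}(X,\oO_X,y)\,q^n$ in closed form. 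Since $\beta_i$ is primitive and indecomposable as a sum of nonzero effective classes, the $Q^{\beta_i}$-coefficient of $\Exp(\cdots)$ on the right-hand side of Conjecture \ref{conj: PT of G non-abelian} is exactly its linear term
\[
\frac{-P_{1,\beta_i}(X,\oO_X,t_4)\,[y]}{[t_4]\,[y^{\frac{1}{2}}q]\,[y^{\frac{1}{2}}q^{-1}]},
\]
with $P_{1,\beta_i}(X,\oO_X,t_4)$ supplied by Proposition \ref{prop: positive root SU(2)}; matching the two closed forms proves the bullet.

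For the remaining bullets ($r=2,3$ and the listed multi-component classes) the argument is a finite but rapidly growing computation. Here I would enumerate all $\TT$-fixed stable pairs of the prescribed class $\beta$ and Euler characteristic $n$, up to the stated $q$-order, as $G$-coloured box configurations supported on the relevant subchain of $C_1,\dots,C_{r-1}$, compute $\pm[-T^{\half}_Z]\cdot\widehat{\Lambda}^{\mdot}(\oO_X^{[n]}|_Z\otimes y^{-1})$ for each by the vertex/edge rules of \cite{CKM, CK2}, and sum to obtain $\sum_{n}P_{n,\beta}(X,\oO_X,y)\,q^n$ modulo the appropriate power of $q$. Comparing with the corresponding $Q^{\beta}$-coefficient of the truncated plethystic exponential on the right-hand side of Conjecture \ref{conj: PT of G non-abelian} --- which now genuinely mixes the lower classes $\beta'<\beta$ through $\Exp$, so that for instance the $Q^{2\beta_1}$, $Q^{3\beta_1}$, $Q^{4\beta_1}$ coefficients are forced by the single invariant $P_{1,\beta_1}$ --- gives the claimed identities. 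This last comparison is carried out by implementing the formalism in Maple/Mathematica.

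The main obstacle in both parts is the consistent determination of the signs $\pm$ in the square root Euler class of \eqref{eqn: localization with []}, which depend on the chosen orientations and on the square roots $T^{\half}_Z$ at each fixed point. I would pin these down as in \cite{CKM, Mon}, using that the total answer must lie in the ring $R$ with no spurious poles and must reduce to the already-established $n=1$ values of Proposition \ref{prop: positive root SU(2)}; the existence of a coherent choice is exactly what the clause ``there exist orientations'' in Conjecture \ref{conj: PT of G non-abelian} asserts. Secondarily, the number of coloured box configurations grows steeply with $n$ and with the degree of $\beta$, which is precisely why the multi-component cases are verified only modulo low powers of $q$; and in the irreducible case the genuine difficulty is evaluating the infinite edge sum over all $n$ in closed form rather than order by order.
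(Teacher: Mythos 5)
Your proposal is correct and follows essentially the same route as the paper: the first bullet is delegated to Proposition \ref{cor:n=0,1}, the irreducible case is done by gluing the two one-leg vertex contributions along the single $(-2,0,0)$-edge and matching against the linear term of the plethystic exponential via Proposition \ref{prop: positive root SU(2)}, and the remaining cases are checked by computer. The one point you flag as a difficulty --- evaluating the full vertex sum over all $n$ in closed form --- is exactly what \cite[Lem.~B.1]{CKM} supplies, namely $\mathsf{V}^{\PT}_{(1)\varnothing\varnothing\varnothing}(t,y,q)=\Exp\bigl([yt_1]q/[t_1]\bigr)$, which is the ingredient the paper's proof invokes at that step.
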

\begin{proof}
The cases $n=0,1$ were covered in Corollary \ref{cor:n=0,1}.

For $\beta$ irreducible, Conjecture \ref{conj: PT of G non-abelian} (in general) states
$$
\mathcal{Z}^{\PT}_{X,\oO_X,\beta}(y,q) := \sum_{n} P_{n,\beta}(X,\mathcal{O}_X,y) q^n = - \frac{P_{1, \beta}(X,\oO_X,t_4) [y]}{[t_4][y^{\frac{1}{2}} q] [y^{\frac{1}{2}} q^{-1}]}.
$$
Let $\beta=\beta_i$. We proceed as in the proof of \cite[Prop.~B.2]{CKM}. We recall that, for appropriate choices of signs, the following expression holds for the vertex \cite[Lem.~B.1]{CKM}
$$
\mathsf{V}^{\PT}_{(1)\varnothing\varnothing\varnothing}(t,y,q) = \mathrm{Exp}\left( \frac{[yt_1]}{[t_1]} q \right).
$$
The normal bundle to the curve representing $\beta_i$ is $N_{\mathbb{P}^1/X}\cong \oO_{\mathbb{P}^1}(-2)\oplus\oO_{\mathbb{P}^1}\oplus \oO_{\mathbb{P}^1}$, therefore the edge term is \cite[\S 1]{CKM}
\[\mathsf{\tilde{e}}=t_3^{-1}- t_1^{-1}t_2^{-1}+t_1^{-1}t_2^{-1}t_3^{-1} -y. \]
Hence we conclude that, for appropriate choices of signs, the generating series is given by
\begin{align*}
\mathcal{Z}^{\PT}_{X,\oO_X,\beta}(y,q)&=\mathsf{V}^{\PT}_{(1)\varnothing\varnothing\varnothing}(t,y,q)|_{t_1=t_1^{-1}t_2,t_2=t_1^2} \cdot \mathsf{V}^{\PT}_{(1)\varnothing\varnothing\varnothing}(t,y,q)|_{t_1=t_1t_2^{-1},t_2=t_2^2} \\
&\, \quad \cdot q \cdot (-1) \cdot [-\tilde{\mathsf{e}}|_{t_1=t_1^{-1}t_2,t_2=t_1^2}] \\
&= -\frac{[y][t_1 t_2]}{[t_3][t_4]}q \, \Exp\left((y^{\frac{1}{2}}+y^{-\frac{1}{2}})q\right)\\
&=\frac{[t_1 t_2]}{[t_3]} \frac{[y]}{[t_4][y^{\frac{1}{2}} q] [y^{\frac{1}{2}} q^{-1}]}.  
\end{align*}
The second statement follows.

All other cases are established by implementing the vertex/edge formalism of \cite{CKM} into a Maple/Mathematica program.
\end{proof}

\begin{rmk}
The verifications of bullet points 2--4 of the previous proposition hold for certain choices of signs, i.e.~orientations, attached to the $\TT$-fixed points $M^{\TT}$. We expect that these orientations are induced from ``natural'' \emph{global} orientations on $M$. This remark holds analogously for the verifications in Propositions \ref{prop: vertex formalism Z_2 Z_2} and \ref{prop: r=1} below.
\end{rmk}

Using the same techniques, we prove the following for $G=\BZ_2\times \BZ_2$.
\begin{prop}\label{prop: vertex formalism Z_2 Z_2}
Let $G=\BZ_{2}\times \BZ_2<\mathrm{SO}(3)$. Then Conjecture \ref{conj: PT of G non-abelian} holds in the following cases.
\begin{itemize}
    \item For $n=0,1$ and all $\beta$.
    \item For $\beta$ irreducible.
    \item For $\beta=d \beta_{01}, d \beta_{10}, d \beta_{11}$  and
    \begin{itemize}
        \item $d=2$ modulo $q^6$,
        \item $d=3$ modulo $q^6$,
        \item $d=4$ modulo $q^7$.
    \end{itemize}
    \end{itemize}
\end{prop}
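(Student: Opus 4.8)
The plan is to follow the proof of Proposition \ref{prop: vertex formalism Z_r} line by line, changing only the local toric input from the $\mathrm{SU}(2)$ geometry of $S\times\BC^2$ to the $\mathrm{SO}(3)$ geometry of $Y\times\BC$. The first bullet ($n=0,1$, all $\beta$) is nothing new: it is exactly the content of Proposition \ref{cor:n=0,1}, which verifies Conjecture \ref{conj: PT of G non-abelian} for $n=0,1$ and every finite subgroup of age at most one, hence in particular for $G=\BZ_2\times\BZ_2$. For the remaining bullets I use that, $G$ being abelian, $X=Y\times\BC$ is a smooth toric Calabi-Yau 4-fold, so that the $K$-theoretic invariants $P_{n,\beta}(X,\oO_X,y)$ are computed by $\TT_2$-localization through the vertex/edge formalism of \cite{CKM, CK2}.

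For $\beta$ irreducible, i.e.\ $\beta\in\{\beta_{01},\beta_{10},\beta_{11}\}$, the supporting curve is a $\PP^1$ with normal bundle $\oO_{\PP^1}(-1)\oplus\oO_{\PP^1}(-1)$ in $Y$, hence $\oO_{\PP^1}(-1)\oplus\oO_{\PP^1}(-1)\oplus\oO_{\PP^1}$ in $X$; locally $X$ is the resolved conifold times $\BC$. I would run the two-vertex, one-edge computation exactly as in the irreducible case of Proposition \ref{prop: vertex formalism Z_r}: place the single-box vertex $\mathsf{V}^{\PT}_{(1)\varnothing\varnothing\varnothing}(t,y,q)=\Exp([yt_1]q/[t_1])$ of \cite[Lem.~B.1]{CKM} at each of the two $\TT_2$-fixed points of the curve, insert the edge factor $[-\tilde{\mathsf{e}}]$ with $\tilde{\mathsf{e}}$ now the edge term of \cite[\S 1]{CKM} attached to the $(-1,-1,0)$ splitting type (in place of the $(-2,0,0)$ term used for $\BZ_r$), and specialize $t_1,\dots,t_4$ to the toric weights along the edge. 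After simplification the product should telescope to $\mp[y]/([t_4][y^{\frac{1}{2}}q][y^{\frac{1}{2}}q^{-1}])$, which is exactly the right-hand side of Conjecture \ref{conj: PT of G non-abelian} once one substitutes $P_{1,\beta}(X,\oO_X,t_4)=\pm1$, the value obtained by setting $y=t_4$ in the explicit formula $P_{1,\beta_{01}}(X,\oO_X,y)=\pm[y]/[t_4]$ established above. This disposes of the irreducible case for all $n$ simultaneously.

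For $\beta=d\beta_{01},d\beta_{10},d\beta_{11}$ with $d=2,3,4$ the class is still supported on a single exceptional $\PP^1$, one of whose endpoints is the central trivalent vertex; the other two coordinate legs there, and the $\BC$-leg, carry empty asymptotic profiles. The $\TT_2$-fixed stable pairs in such a class are indexed by the finite box configurations at the two vertices together with a $2$-dimensional asymptotic profile of size $d$ along the thickened edge, and no closed form is available for the one-legged vertex. Accordingly I would implement the full vertex/edge formalism of \cite{CKM} in Maple/Mathematica, enumerate all contributing fixed loci up to the stated order ($q^6$ for $d=2,3$ and $q^7$ for $d=4$), evaluate their localization contributions via the $[\,\cdot\,]$-operator, and match the resulting $q$-series against the plethystic exponential predicted by Conjecture \ref{conj: PT of G non-abelian}. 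As in Proposition \ref{prop: vertex formalism Z_r}, the check is performed for suitable choices of the signs attached to the fixed points.

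The hard part is twofold. First, one must read off the correct edge term $\tilde{\mathsf{e}}$ for the $(-1,-1,0)$ splitting and fix the orientation/sign conventions so that the per-fixed-point signs $\pm1$ are globally consistent; as in the neighbouring propositions, the identity is only asserted for such a choice, and establishing that these come from a genuine global orientation is left open. Second, the number of $\TT_2$-fixed loci grows quickly with $d$ and $n$: even though the two non-$\beta_{01}$ legs at the central vertex are empty, that vertex still contributes full three-dimensional box configurations in the remaining directions, and this combinatorial growth is exactly what forces the computer verification to stop at a finite power of $q$.
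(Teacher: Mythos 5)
Your proposal matches the paper's argument: the paper gives no separate proof for this proposition but states it follows ``using the same techniques'' as Proposition \ref{prop: vertex formalism Z_r}, namely Proposition \ref{cor:n=0,1} for $n=0,1$, the explicit two-vertex/one-edge computation for irreducible $\beta$ (here the local geometry is the resolved conifold times $\BC$, i.e.\ the $(-1,-1,0)$ edge already treated in \cite[Prop.~B.2]{CKM}, consistent with $P_{1,\beta}(X,\oO_X,t_4)=\pm 1$), and a Maple/Mathematica implementation of the vertex/edge formalism for the multiple classes up to the stated orders in $q$, with suitable sign choices at the fixed points. No substantive difference from the paper's route.
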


\section{Donaldson-Thomas theory of Calabi-Yau 4-orbifolds}

\subsection{Calabi-Yau 4-orbifolds}\label{sec: CY 4 orbifolds}

Following the notation of \cite[\S 2]{BCY}, we define:
\begin{defi}\label{defi of cy4 orb}
A \emph{Calabi-Yau 4-orbifold} is a smooth 4-dimensional quasi-projective Deligne-Mumford stack $\xX$ over $\BC$ having generically trivial stabilizers and trivial canonical bundle $K_{\xX}\cong \oO_{\xX}$. 
\end{defi}
Then the local model at a point $p\in \xX$ is $[\BC^4/G_{p}]$, where $G_p<\mathrm{SU}(4)$ is the finite group of automorphism at $p$.

We denote by $ K(\xX)$ the $K$-group of compactly supported coherent sheaves on $\xX$ modulo numerical equivalence, which comes with a natural filtration
\[F_0K(\xX)\subset F_1K(\xX)\subset F_2K(\xX)\subset F_3 K(\xX)\subset K(\xX),\]
where an element of $F_d K(\xX)$ can be represented by a formal sum of sheaves supported in dimension $d$ or less.

\subsection{Hilbert schemes on orbifolds}

Given $\alpha\in F_0K(\xX)$, we denote by $\Hilb^{\alpha}(\xX)$ the moduli stack parametrizing 0-dimensional closed substacks $\wW \subset \xX$ with class $[\oO_\wW]=\alpha$. 
Olsson-Starr \cite{OS} proved that $\Hilb^{\alpha}(\xX)$ is represented by an algebraic space as its objects do not have automorphisms. 
In fact, $\Hilb^{\alpha}(\xX)$ is a quasi-projective scheme by \cite[Thm.~1.5]{OS} (see also \cite{Nir}).

\begin{example}\label{ex: Hilb of quotient stack}
Let $\xX=[\BC^4/G]$, where $G < \mathrm{SU}(4)$
 is a finite group acting on $\BC^4$ by matrix multiplication. The $K$-group of $\xX$ is canonically identified with the representation ring
 \[
 K(\xX)\cong \mathbb{Z}[G^*],
 \]
 where $G^*$ denotes the character group of $G$. For any $G$-representation $R\in K(\xX)$, the $\mathbb{C}$-points of the  corresponding Hilbert scheme are given by
 \begin{align*}
    \Hilb^R([\BC^4/G])= \Big\{W\in\Hilb^{\dim R}(\mathbb{C}^4)\,|\,W \subset \mathbb{C}^4\textrm{ }\textrm{is 0-dim.}\textrm{ }G\textrm{-}\textrm{invariant}
\textrm{ }\textrm{and}\textrm{ }H^0(W,\oO_W)\cong R\Big\},  \nonumber
\end{align*}
where $H^0(W,\oO_W)\cong R$ is an isomorphism of $G$-representations.
Equivalently, there is a natural induced $G$-action on $\Hilb^n(\BC^4)$ and its $G$-fixed locus decomposes scheme-theoretically as
\begin{align*}
    \Hilb^n(\BC^4)^G=\bigsqcup_{\dim R=n} \Hilb^R([\BC^4/G]),
\end{align*}
where the disjoint union is over all (isomorphism classes of) $G$-representations of dimension $n$.
\end{example}

\subsection{Virtual structures}

Let $\xX$ be a Calabi-Yau 4-orbifold and $\alpha\in F_0K(\xX)$. By \cite[Cor.~2.13]{PTVV}, the moduli space $I:=\Hilb^\alpha(\xX)$ has a derived enhancement carrying a $(-2)$-shifted symplectic structure, which by \cite[Prop.~1.2]{STV} induces a 3-term symmetric obstruction theory 
\begin{align}\label{eqn: ot orbifold}
    \BE_\xX:=\mathbf{R}\pi_{I*}\mathbf{R}\HOM(\iI, \iI)_0[3]\to \mathbb{L}_{I},
\end{align}
where $\pi_I: \xX \times I \to I$ is the projection and  $\iI$ denotes the  ideal sheaf of the universal substack $\zZ\subset \xX\times I$.

We assume from now on that the obstruction theory \eqref{eqn: ot orbifold} is \emph{orientable}, as defined by \cite{CGJ}. Then there exist \emph{virtual fundamental classes} and  \emph{twisted virtual structure sheaves}
\begin{align*}
    [I]^{\vir} &\in\, A_{*}\left(I,\mathbb{Z}\left[\tfrac{1}{2}\right]\right), \\
    \widehat{\oO}_{I}^{\vir} &\in\, K_0\left(I, \mathbb{Z}\left[\tfrac{1}{2}\right]\right).
\end{align*}

\subsection{Orbifold Nekrasov genus}

Let $\xX$ be a  Calabi-Yau 4-orbifold,  $L$ a line bundle on $\xX$, and $\alpha\in F_0K(\xX)$. Assume moreover that the  obstruction theory \eqref{eqn: ot orbifold} is \emph{orientable}. We define the following \emph{tautological vector bundle} on $I:=\Hilb^\alpha(\xX)$
\begin{equation*} \label{eqn: taut orbi}
L^{[\alpha]} := \pi_{I*} (\pi_\xX^* L \otimes \oO_{\mathcal{Z}}),
\end{equation*}
where $\mathcal{Z}\subset \xX\times I$ is the universal closed substack and $\pi_\xX, \pi_I$ are the natural projections. Suppose $\xX=[X/G]$ is the global quotient stack of a Calabi-Yau 4-fold $X$ by the action of a finite group $G$ preserving the Calabi-Yau volume form. Let $\wW\in I$ be a point corresponding to a 0-dimensional $G$-invariant subscheme $W\subset X$. Then the fibre over $\wW$ satisfies
\[
L^{[\alpha]}|_{\wW}=H^0(\wW, \oO_\wW)=H^0(W, \oO_W)^G.
\]
\begin{defi} 
Let $\xX$ be a projective Calabi-Yau 4-orbifold. 
Assume $I:=\Hilb^\alpha(\xX)$ is orientable and fix an orientation. We define the \emph{orbifold Nekrasov genus} by
\begin{align*}
I_{\alpha}(\xX,L, y) &:= \chi\left(I, \widehat{\oO}_I^{\vir} \otimes \widehat{\Lambda}^{\mdot} (L^{[\alpha]} \otimes y^{-1}) \right)\in \BQ(y^{\frac{1}{2}}).
\end{align*}
Let $\xX$ be a (not necessarily projective) Calabi-Yau 4-orbifold with action of an algebraic torus $\TT$ preserving the Calabi-Yau volume form. 
Assume there exists a smooth projective 4-orbifold $\overline{\xX}$ with $\TT$-action and a $\TT$-equivariant open immersion $\xX \hookrightarrow \overline{\xX}$.\footnote{This assumption is satisfied in all the examples considered in this paper.}
Assume the fixed locus $I^\TT$ is proper and orientable, and fix an orientation.\footnote{Both assumptions are satisfied when $I^\TT$ is 0-dimensional reduced, e.g., as in Lemma \ref{lemma: fixed locus stack}.} Let $L$ be a $\TT$-equivariant line bundle on $\xX$. Then we define the orbifold Nekrasov genus by the virtual localization formula
\begin{align}\label{eqn: loc inv orbifold}
I_{\alpha}(\xX,L, y) &:= \chi\left(I^\TT,\frac{\widehat{\oO}_{I^\TT}^{\vir}}{\sqrt{\mathfrak{e}^{\TT}}(N^{\vir})} \otimes \widehat{\Lambda}^{\mdot} (L^{[\alpha]}|_{I^\TT} \otimes y^{-1}) \right)\in K_0^{\TT}(\pt)_{\mathrm{loc}}(y^{\frac{1}{2}}),
\end{align}
where $K^0_{\TT}(\pt)_{\mathrm{loc}}$ is the localization of $K^0_{\TT}(\pt)$ given by \eqref{equ for k0 loc}.
\end{defi}
We define the $K$-theoretic orbifold DT \emph{partition function} by
\begin{align}\label{K-orbi DT}
\mathcal{Z}^{\DT}_{\xX,L}(y,q)&:=\sum_{\alpha\in F_0K(\xX)}I_{\alpha}(\xX,L, y)\,q^\alpha\in K^0_{\TT}(\pt)_{\mathrm{loc}}(y^{\frac{1}{2}})(\!(q)\!),
 \end{align}
where  $q$ is a multi-index variable.

\subsection{Toric Calabi-Yau 4-orbifolds}

Building on Definition \ref{defi of cy4 orb}, we consider the following:
\begin{defi}
A \emph{toric} Calabi-Yau 4-orbifold $\xX$ is a 4-dimensional smooth quasi-projective toric Deligne-Mumford stack over $\BC$ with generically trivial stabilizers and  trivial canonical bundle $K_{\xX}\cong \oO_{\xX}$. 
\end{defi}
For the general theory of toric Deligne-Mumford stacks, we refer to \cite{BCS, FMN}.

As observed by Bryan-Cadman-Young, a toric Calabi-Yau orbifold $\xX$ (of any dimension) is uniquely determined by its coarse moduli space $X$, which is a toric variety in the usual sense \cite[Lem.~40]{BCY}. Therefore, a toric Calabi-Yau 4-orbifold is determined by a 4-dimensional toric variety with Gorenstein finite quotient singularities and trivial canonical bundle. As usual, the latter are determined by a lattice and fan $(N,\Sigma)$, and the trivial canonical divisor provides a linear function $\ell : N \to \mathbb{Z}$ such that $\ell(v_i) = 1$ for all generators $v_i$ of the 1-dimensional cones of $\Sigma$. In turn, the latter data can be translated into a so-called \emph{web diagram} (essentially the dual graph to the triangulation of the hyperplane $\{\ell = 1\}$ \cite[App.~B]{BCY}). The upshot is that, at each torus fixed point $p\in \xX$, there exists an open neighborhood of $\xX$ isomorphic to a global quotient stack $[\BC^4/G_p]$, where the finite abelian subgroup $G_p<(\BC^*)^4\cap \mathrm{SU}(4)$ on $\BC^4$ can be explicitly reconstructed from the web diagram \cite[Lem.~46]{BCY} (here $(\BC^*)^4$ denotes the maximal torus of $\mathrm{GL}(4,\mathbb{C})$).\footnote{Strictly speaking, \cite[App.~B]{BCY} is written for Calabi-Yau 3-orbifolds, but the arguments trivially generalize to Calabi-Yau 4-orbifolds.}

\subsection{Combinatorial description of the fixed locus}

Let $\xX$ be a toric Calabi-Yau 4-orbifold,  $\alpha\in F_0K(\xX)$ and let $\{p_j \}_j\in \xX$ the $(\BC^*)^4$-fixed points. We denote by 
\begin{equation}\label{equ on cy torus}\TT=\{t_1t_2t_3t_4=1\}\subset (\BC^*)^4 \end{equation}
the subtorus preserving the Calabi-Yau volume form of $\xX$. The  $\TT$-fixed locus of the Hilbert scheme  $\Hilb^\alpha(\xX)$ is described in terms of \emph{coloured solid partitions}, as we now explain.

At each fixed point $p_j\in \xX$, there exists an open toric chart of the form $[\BC^4/G_j]$, for a finite $G_j < (\mathbb{C}^*)^4 \cap  \mathrm{SU}(4)$, with its induced $\TT$-action. Since each element $\alpha\in F_0K(\xX)$ can be written as a formal sum of sheaves supported on the fixed points $\{p_j\}_j$, we can write
\[\alpha=\sum_j [\oO_{p_j}]\otimes R^{(j)}\in F_0K(\xX),\]
where each $R^{(j)}$ is a $G_j$-representation. In general this decomposition is non-unique. Whenever clear from the context, we simply identify $[\oO_{p_j}]\otimes R^{(j)}$ with its $G_j$-representation $R^{(j)}\in \mathbb{Z}[G_j^*]$, where $G_j^*$ denotes the group of characters of $G_j$.
\begin{lem}\label{lemma: fixed locus stack}
There exists an isomorphism $\TT$-fixed loci
\[
\Hilb^{\alpha}(\xX)^{\TT}\cong \coprod_{\alpha=\sum_j R^{(j)}}\prod_{j}\Hilb^{R^{(j)}}([\BC^4/G_j])^{\TT},
\]
where the disjoint union is over all possible decompositions $ \alpha=\sum_j R^{(j)}$. Moreover, the $\TT$-fixed loci are reduced and 0-dimensional.
\end{lem}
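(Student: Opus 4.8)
The plan is to reduce the global statement to the affine charts, identify each local fixed locus combinatorially, and deduce reducedness and $0$-dimensionality from the case of $\BC^4$. First I would determine $\xX^\TT$: at each $(\BC^*)^4$-fixed point $p_j$, with chart $[\BC^4/G_j]$, the coordinate weights $t_1,t_2,t_3,t_4$ restrict to nontrivial characters of $\TT=\{t_1t_2t_3t_4=1\}$, so $p_j$ is an isolated $\TT$-fixed point, while conversely any $\TT$-fixed point lies in $\xX^{(\BC^*)^4}$; hence $\xX^\TT=\{p_j\}_j$. Any $\TT$-fixed $0$-dimensional closed substack $\wW\subset\xX$ is therefore supported on $\{p_j\}_j$ and splits as $\wW=\coprod_j\wW_j$, with $\wW_j$ a $\TT$-fixed substack of the chart $[\BC^4/G_j]$; setting $R^{(j)}=[\oO_{\wW_j}]$ gives $\alpha=\sum_jR^{(j)}$.

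To promote this pointwise decomposition to a scheme isomorphism I would compare functors of points. A $\TT$-fixed flat family of $0$-dimensional substacks over a base $B$ is supported on $\{p_j\}\times B$; since we may take the finitely many charts $[\BC^4/G_j]$ to be disjoint opens, the family splits canonically as $\coprod_j\wW_j$ with each $\wW_j\subset[\BC^4/G_j]\times B$ flat, and the classes $R^{(j)}=[\oO_{\wW_j}]$ are locally constant. Thus each decomposition type $\alpha=\sum_jR^{(j)}$ is constant on connected components of $B$, which identifies the functor represented by $\Hilb^\alpha(\xX)^\TT$ with $\coprod_{\alpha=\sum_jR^{(j)}}\prod_j\Hilb^{R^{(j)}}([\BC^4/G_j])^\TT$, giving the stated isomorphism.

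It then remains to show each local factor $\Hilb^R([\BC^4/G])^\TT$, with $G<(\BC^*)^4\cap\mathrm{SU}(4)$ diagonal, is reduced and $0$-dimensional. By Example \ref{ex: Hilb of quotient stack} this is a union of connected components of $\Hilb^{\dim R}(\BC^4)^G$, so its $\TT$-fixed locus is a closed subscheme of $\Hilb^{\dim R}(\BC^4)^\TT$. I would first check that a $\TT$-fixed ideal $I\subset\BC[x_1,\dots,x_4]$ is necessarily monomial: decomposing by $\TT$-weight $w$, the monomials $M_w$ of weight $w$ form a free rank-one module over $\BC[e]$ with $e=x_1x_2x_3x_4$, and since $W$ is supported at the origin $e$ acts nilpotently modulo $I$, forcing $I\cap M_w$ to be spanned by monomials. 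As monomial ideals are automatically $G$-invariant and finite in number, $\Hilb^{\dim R}(\BC^4)^\TT$ is a finite set of isolated points (solid partitions); selecting those with $\oO_W\cong R$ as $G$-representations exhibits $\Hilb^R([\BC^4/G])^\TT$ as the scheme of $G$-coloured solid partitions, and its reducedness follows from the vanishing of the $\TT$-weight-zero part of $\Hom(I,\oO/I)$ at each monomial ideal, as established in \cite{Nek,NP,CKM,CK2}.

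The main obstacle is precisely this last reducedness for the Calabi-Yau torus. A priori the $\TT$-weight-zero part of $\Hom(I,\oO/I)$ is strictly larger than the $(\BC^*)^4$-weight-zero part, since it collects all homomorphisms of $\BZ^4$-weight in $\langle(1,1,1,1)\rangle$; its vanishing is the one step that genuinely uses the geometry of $\BC^4$ rather than formal torus-localization bookkeeping, and everything else in the proof is a matter of functoriality and the cyclic-module argument above.
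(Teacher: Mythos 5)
Your proposal is correct and follows essentially the same route as the paper: restrict to the toric charts, identify the local $\TT$-fixed loci with ($G$-coloured) solid partitions inside $\Hilb^n(\BC^4)^{\TT}$, and import the reducedness and $0$-dimensionality of $\Hilb^n(\BC^4)^{\TT}$ from the literature (the paper cites \cite{CK1} for precisely the weight-zero vanishing of $\Hom(I,\oO/I)$ that you correctly isolate as the one non-formal input). The only difference is cosmetic: the paper upgrades the pointwise bijection to a scheme isomorphism by checking that the restriction morphism induces an isomorphism on tangent spaces via the decomposition of $\Ext^1(\oO_{\wW},\oO_{\wW})$, rather than via your functor-of-points splitting.
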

\begin{proof}
Denote by $\uU_j=[\BC^4/G_j]$ the toric charts at the torus fixed points $p_j\in \xX$ as above. Clearly, restriction to $\uU_j$ provides a morphism
\begin{equation} \label{eqn: restrmor}
\Hilb^{\alpha}(\xX)^{\TT}\to \coprod_{\alpha=\sum_j R^{(j)}}\prod_{j}\Hilb^{R^{(j)}}([\BC^4/G_j])^{\TT}.
\end{equation}
The $\mathbb{C}$-points of $\Hilb^{\alpha}(\xX)^{\TT}$ are 0-dimensional coherent sheaves supported at the torus fixed points $p_j$, from which it follows that \eqref{eqn: restrmor} is a bijection on $\mathbb{C}$-points. Furthermore
$$
\Hilb^{R^{(j)}}([\BC^4/G_j])^{\TT} \hookrightarrow  \Hilb^n(\BC^4)^\TT,
$$
where $\dim R^{(j)}=n$. Since $\Hilb^n(\BC^4)^\TT$ is 0-dimensional reduced \cite[Lem.~3.6]{CK1}, we find that the codomain of \eqref{eqn: restrmor} is 0-dimensional reduced. It remains to show that $\Hilb^{\alpha}(\xX)^{\TT}$ is reduced. 

For any $\mathbb{C}$-point $\wW \in \Hilb^{\alpha}(\xX)^{\TT}$, we can write
$
\oO_\wW = \bigoplus_j \oO_{\wW_j}
$
where $\wW_j = \wW|_{\uU_j}$ is a 0-dimensional closed substack supported at $p_j$. We determine the tangent space $\Hom_{\xX}(I_\wW,\oO_\wW) \cong \Ext^1_{\xX}(I_\wW,I_\wW)_0 \cong \Ext^1_{\xX}(\oO_\wW,\oO_\wW)$ at $\wW$ (these isomorphisms follow as in the proof of Lemma \ref{lem:compareExts}). Since the $\wW_j$ are 0-dimensional with disjoint supports, we have
$$
\Ext^1_{\xX}(\oO_\wW,\oO_\wW) \cong \bigoplus_j \Ext^1_{\uU_j}(\oO_{\wW_j},\oO_{\wW_j}).
$$
By the first part of the proof, the right-hand side has $\TT$-fixed part equal to zero. Hence \eqref{eqn: restrmor} is an isomorphism on tangent spaces and the result follows. 
\end{proof}
By Lemma \ref{lemma: fixed locus stack} we just need to classify the $\TT$-fixed points in the local case of a global quotient stack $[\BC^4/G]$, where $G<(\mathbb{C}^*)^4 \cap \mathrm{SU}(4)$ is a finite abelian subgroup. By the construction in Example \ref{ex: Hilb of quotient stack}, such $\TT$-fixed points will form a subset of the $\TT$-fixed points of $\Hilb^n(\BC^4)$, whose description we now recall.
\begin{defi}
A \emph{solid partition} is a collection of finitely many lattice points $\pi\subset \mathbb{Z}^4_{\geq 0}$ such that if any of $(i+1,j,k,l),(i,j+1,k,l),(i,j,k+1,l),(i,j,k,l+1)$ is in $\pi$, then $(i,j,k,l) \in \pi$. We denote the \emph{size} of a solid partition by $|\pi|$, i.e. the number of boxes in $\pi$.
\end{defi}
By (for example) \cite[\S 3.1]{CK1}, the $\TT$-fixed locus of the ordinary Hilbert scheme of points $\Hilb^n(\BC^4)$ is described as follows
\begin{align*}
    \Hilb^n(\BC^4)^\TT=\left\{ \pi \mbox{ solid partition}: |\pi|=n\right\}.
\end{align*}
In the orbifold setting we need to distinguish which solid partitions correspond to $\TT$-fixed points in  $  \Hilb^R([\BC^4/G])^\TT\subset  \Hilb^n(\BC^4)^\TT$, where $\dim R=n$.
\begin{defi}[{\cite[Def.~1.2]{Young}}]
Let $G< (\mathbb{C}^*)^4 \cap  \mathrm{SU}(4)$ be any finite abelian group (where $(\mathbb{C}^*)^4 < \mathrm{GL}(4, \BC)$ denotes the diagonal subgroup).
A \emph{colouring} (with respect to $G$) is a homomorphism of additive monoids
\begin{align*}
    K:(\BZ_{\geqslant  0})^4\to G.
\end{align*}
\end{defi}
Note that a colouring is determined only by the images of $(1,0,0,0)$, $(0,1,0,0)$, $(0,0,1,0)$, $(0,0,0,1)$. We refer to the elements of $G$ as \emph{colours}; by the bijection $G \cong G^*$ they correspond to the irreducible representations of $G$. In particular, $K$ assigns an irreducible representation of $G$ to each element of $\mathbb{Z}_{\geq 0}^4$. 
\begin{defi}
\hfill
\begin{enumerate}
\item[(1)] Let $K : \mathbb{Z}_{\geq 0}^4 \to G$ be a colouring. Given a solid partition $\pi\subset (\BZ_{\geqslant  0})^4$, we say that a box $(i,j,k,l)\in \pi$ is \emph{$R$-coloured} if the image $K(i,j,k,l)$ corresponds to the irreducible $G$-representation $R$. 
\item[(2)] For $R\in G^*$, we denote by $|\pi|_R$ the number of boxes in $\pi$ of colour $R$.  
\item[(3)] For a $G$-representation $R=\sum_{i=0}^rd_i R_i$, where $R_0,\dots, R_r$ denote the irreducible $G$-representations, we say that a solid partition $\pi$ is $R$-\textit{coloured} if $|\pi|_{R_i}=d_i$ for all $i=0,\dots, r$. In this case, we write $|\pi|_G=R$.
\end{enumerate}
\end{defi}
Let $G< (\mathbb{C}^*)^4 \cap  \mathrm{SU}(4)$ be a finite abelian subgroup. Then $G$ defines a colouring via its standard action on $\mathbb{C}^4$ (defined by matrix multiplication). Denoting the standard coordinates of $\mathbb{C}^4$ by $x,y,z,w$, we obtain 1-dimensional vector spaces $R_x := (x)/(x^2,y,z,w)\cap (x)$, $R_y := (y)/(x,y^2,z,w)\cap(y)$, $R_z := (z)/(x,y,z^2,w)\cap(z)$, $R_w:= (w)/(x,y,z,w^2)\cap(w)$ which are naturally irreducible $G$-representations. 
We define the associated colouring function by
\[
K(i,j,k,l)=R_x^{\otimes i}\otimes R_y^{\otimes j} \otimes R_z^{\otimes k}\otimes R_w^{\otimes l}.
\]
Then we have an isomorphism of (infinite dimensional) $G$-representations
$$
\mathbb{C}[x,y,z,w] \cong \bigoplus_{(i,j,k,l) \in \mathbb{Z}^4_{\geq 0}} K(i,j,k,l).
$$
 Given a $G$-representation $R$, we have
\begin{align*}
    \Hilb^R([\BC^4/G])^\TT=\left\{ \pi \mbox{ solid partition}: |\pi|_{G}=R\right\}.
\end{align*}

\subsection{Orbifold vertex formalism}\label{sect on orb vertex}

Let $\xX$ be a toric Calabi-Yau 4-orbifold, $\alpha\in F_0K(\xX)$, and let $\TT\subset (\BC^*)^4$ be the Calabi-Yau subtorus \eqref{equ on cy torus}. As in \S \ref{sec: virtual localization}, the $\TT$-action lifts to $\Hilb^\alpha(\xX)$ making the obstruction theory $\BE_\xX$ \eqref{eqn: ot orbifold} $\TT$-equivariant. Given any $\wW\in \Hilb^\alpha(\xX)$, we denote the \emph{virtual tangent space} at $\wW$ by
$$T^{\vir}_{\xX, \wW}:=\BE^\vee_\xX|_{\wW} \in K^0_{\TT}(\pt).$$
\begin{lem}
For every $\TT$-fixed point $\wW\in \Hilb^\alpha(\xX)^\TT$, the virtual tangent space  $T^{\vir}_{\xX, \wW}$ is $\TT$-movable.
\end{lem}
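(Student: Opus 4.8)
The plan is to reduce the statement to the already understood case of $\Hilb^n(\BC^4)$ by passing to the toric charts, and then to exploit the hypothesis $G<\mathrm{SU}(4)$ to identify the $\TT$-fixed part of the virtual tangent space on the orbifold with the $\TT$-fixed part on $\BC^4$. First I would decompose the fixed point exactly as in the proof of Lemma \ref{lemma: fixed locus stack}: writing $\oO_\wW=\bigoplus_j\oO_{\wW_j}$ with $\wW_j=\wW|_{\uU_j}$ supported at the torus fixed point $p_j$ in the chart $\uU_j=[\BC^4/G_j]$, the pairwise disjointness of supports makes $\mathbf{R}\Hom_\xX(I_\wW,I_\wW)_0$ split as a direct sum over $j$, and hence $T^{\vir}_{\xX,\wW}=\sum_j T^{\vir}_{\uU_j,\wW_j}$ in $K^0_\TT(\pt)$ (using $\Ext^i_\xX(I_\wW,I_\wW)_0\cong\Ext^i_\xX(\oO_\wW,\oO_\wW)$ for $i=1,2,3$ and the vanishing of $\Hom_0$ and $\Ext^4_0$, as in Lemma \ref{lem:compareExts}). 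It thus suffices to prove movability for a single chart $\xX=[\BC^4/G]$, $G<(\BC^*)^4\cap\mathrm{SU}(4)$, with $\wW$ the $G$-invariant $0$-dimensional subscheme $W\subset\BC^4$ (a solid partition) of class $R$.

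Next I would identify the virtual tangent space on the quotient stack with the $G$-invariant part of the one on $\BC^4$. Coherent sheaves on $[\BC^4/G]$ are $G$-equivariant coherent sheaves on $\BC^4$, so $\Ext$-groups on the stack are the $G$-invariants of the underlying $\Ext$-groups on $\BC^4$; since $G$ sits inside the diagonal torus these invariants are taken inside $(\BC^*)^4$-representations. Consequently, as virtual $(\BC^*)^4$-representations,
\[
T^{\vir}_{[\BC^4/G],\wW}=\bigl(T^{\vir}_{\Hilb^n(\BC^4),W}\bigr)^{G},\qquad n=\dim R,
\]
where $W$ is the $\TT$-fixed point of $\Hilb^n(\BC^4)$ corresponding to $\wW$ under $\Hilb^R([\BC^4/G])^\TT\hookrightarrow\Hilb^n(\BC^4)^\TT$.

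The key point, and the place where $G<\mathrm{SU}(4)$ enters, is the comparison of fixed parts. A $(\BC^*)^4$-weight $t^\mu$ restricts to the trivial character of $\TT=\{t_1t_2t_3t_4=1\}$ precisely when $\mu\in\BZ\cdot(1,1,1,1)$, and every such diagonal weight $(t_1t_2t_3t_4)^k$ is trivial on $G$ because each $g=\mathrm{diag}(g_1,g_2,g_3,g_4)\in G$ satisfies $g_1g_2g_3g_4=1$. Hence all $\TT$-fixed weights automatically survive the passage to $G$-invariants, so taking $G$-invariants only deletes weights nontrivial on $G$ and does not affect the $\TT$-fixed part:
\[
\bigl(T^{\vir}_{[\BC^4/G],\wW}\bigr)^{\fix}=\Bigl(\bigl(T^{\vir}_{\Hilb^n(\BC^4),W}\bigr)^{G}\Bigr)^{\fix}=\bigl(T^{\vir}_{\Hilb^n(\BC^4),W}\bigr)^{\fix}.
\]
I would then invoke the movability on affine space: by \cite{CK1} (see also \cite{CKM}) the virtual tangent space of $\Hilb^n(\BC^4)$ at a $\TT$-fixed solid partition has no $\TT$-fixed part, which gives $(T^{\vir}_{\xX,\wW})^{\fix}=0$ and hence the claim.

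The main obstacle I expect is making the middle identification of fixed parts fully rigorous: one must check that the $\TT$-equivariant structure on the orbifold genuinely descends from the $(\BC^*)^4$-structure on $\BC^4$, and track the weight bookkeeping so that the relation $t_1t_2t_3t_4=1$ is seen to make the diagonal weights simultaneously $\TT$-trivial and $G$-trivial. The movability on $\BC^4$ itself is the only nontrivial analytic input; since the $\Ext^1$ and $\Ext^3$ fixed parts already vanish (by reducedness of the fixed locus together with equivariant Serre duality on the Calabi-Yau chart), it amounts to the vanishing of the $\TT$-fixed part of $\Ext^2_{\BC^4}(I_W,I_W)_0$, which is supplied by the $\BC^4$ vertex computation.
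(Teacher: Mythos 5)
Your proposal is correct and follows essentially the same route as the paper: decompose over the toric charts via the splitting of $\mathbf{R}\Hom_{\xX}(I_\wW,I_\wW)_0$, identify $T^{\vir}_{[\BC^4/G],\wW}=(T^{\vir}_{\BC^4,W})^G$, and conclude from the known $\TT$-movability of $T^{\vir}_{\BC^4,W}$ together with the compatibility of taking $G$-invariants and $\TT$-fixed parts. Your observation that $G<\mathrm{SU}(4)\cap(\BC^*)^4$ forces all $\TT$-trivial weights $(t_1t_2t_3t_4)^k$ to be $G$-trivial is a slightly sharper version of the paper's remark that the two fixed-part operations commute, but it plays the same role.
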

\begin{proof}
Using the notation of Lemma \ref{lemma: fixed locus stack}, we have
$$
-\mathbf{R}\Hom_{\xX}(I_{\wW},I_{\wW})_0 = - \sum_j \mathbf{R}\Hom_{\mathcal{U}_j}(I_{\wW_j},I_{\wW_j})_0.
$$
This equality follows from the fact that $\Hom_{\xX}(I_{\wW},I_{\wW})_0 = \Ext^4_{\xX}(I_{\wW},I_{\wW})_0 = 0$, $\Ext^i_{\xX}(I_{\wW},I_{\wW})_0 \cong \Ext^i_{\xX}(\mathcal{O}_{\wW},\mathcal{O}_{\wW})$ for $i=1,2,3$, and 
$$\Ext^i_{\xX}(\mathcal{O}_{\wW},\mathcal{O}_{\wW}) \cong \bigoplus_j \Ext^i_{\mathcal{U}_j}(\mathcal{O}_{\wW_j},\mathcal{O}_{\wW_j})$$
for all $i=1,2,3$.
Therefore we may assume that $\xX=[\BC^4/G]$ for a finite abelian subgroup $G< (\mathbb{C}^*)^4 \cap  \mathrm{SU}(4)$ and we claim that $T^{\vir}_{[\BC^4/G],\wW}$ is $\TT$-movable for any $\wW\in  \Hilb^R([\BC^4/G])^\TT$, where $R$ is a $G$-representation of some dimension $n$. 
Let $W \subset \mathbb{C}^4$ be the $\TT$-fixed and $G$-fixed 0-dimensional closed subscheme corresponding to $\wW$.
Denote by $T^{\vir}_{\BC^4, W}$ the virtual tangent space at $W\in \Hilb^n(\BC^4)$.
Then
\begin{align*}
    T^{\vir}_{[\BC^4/G],\wW}=(T^{\vir}_{\BC^4, W})^{G},
\end{align*}
where by $(\cdot)^G$ we denote the $G$-fixed part when seeing $T^{\vir}_{\BC^4, W}$ as a $G$-representation. The conclusion follows from the fact that $T^{\vir}_{\BC^4, W}$ is $\TT$-movable  (cf.~\cite[Prop.~2.3]{Mon}, see also \cite[Lem.~2.2]{CK2}) and the fact that taking the $G$-fixed  part commutes with taking the  $\TT$-fixed part.
\end{proof}

\subsubsection{Ordinary vertex formalism} 

We recall the vertex formalism for Donaldson-Thomas invariants associated to $\Hilb^n(\mathbb{C}^4)$ developed by Nekrasov-Piazzalunga \cite{NP}. We will follow the conventions of \cite{CKM}
and then upgrade it to the orbifold setting.

Let $Z\in \Hilb^n(\BC^4)^\TT$ be the fixed point corresponding to a solid partition $\pi$ of size $n$. The $(\BC^*)^4$-representation of the space of global sections $H^0(\mathbb{C}^4,\oO_{Z})$ is given by
\begin{align*}
    Z_\pi:=\sum_{(i,j,k,l)\in \pi}t_1^i t_2^j t_3^k t_4^l \in K^0_{(\mathbb{C}^*)^4}(\pt) \cong \BZ[t_1^{\pm 1},\dots, t_4^{\pm 1}].
\end{align*}
Let $I_Z \subset \oO_{\mathbb{C}^4}$ be the ideal sheaf of $Z \subset \mathbb{C}^4$ and $\oO_Z$ its structure sheaf. Then we can express  the virtual tangent bundle $T^{\vir}_{\BC^4,Z}$  in the equivariant $K$-group as
\begin{align*}
    T^{\vir}_{\BC^4,Z}&=\mathbf{R}\Hom_{\mathbb{C}^4}(\oO_{\mathbb{C}^4},\oO_{\mathbb{C}^4})-\mathbf{R}\Hom_{\mathbb{C}^4}(I_Z,I_Z)\\
    &=\mathbf{R}\Hom_{\mathbb{C}^4}(\oO_{\mathbb{C}^4},\oO_Z)+\mathbf{R}\Hom_{\mathbb{C}^4}(\oO_Z,\oO_{\mathbb{C}^4})-\mathbf{R}\Hom_{\mathbb{C}^4}(\oO_Z,\oO_Z)\\
    &= Z_\pi+\frac{\overline{Z}_\pi}{t_1 t_2 t_3 t_4}-\frac{P_{1234}Z_\pi\overline{Z}_\pi}{t_1 t_2 t_3 t_4},
\end{align*}
where $P_{I}=\prod_{i\in I}(1-t_i)$ for any set of indices $I \subset \{1,2,3,4\}$ and $\overline{(\cdot)}$ denotes the involution on $K^0_{(\BC^*)^4}(\pt)$ sending $t_i \mapsto 1/t_i$ (see \cite[\S 3.4.3]{O} for a similar computation in 3 dimensions). If we restrict to the subtorus $\TT=\{t_1t_2t_3t_4=1\}\subset (\BC^*)^4$ preserving the Calabi-Yau form, then $T_{\BC^4,Z}^{\vir}$ admits the ``square root''
\begin{align*}
    T_{\BC^4,Z}^{\vir}=\mathsf{v}_\pi+\overline{\mathsf{v}}_\pi,
\end{align*}
where 
\begin{align*}
    \mathsf{v}_\pi:= Z_\pi- \overline{P}_{123}Z_\pi \overline{Z}_\pi\in K^0_\TT(\pt)\cong\frac{\BZ[t_1^{\pm 1},\dots, t_4^{\pm 1}]}{(t_1t_2t_3t_4-1)}.
\end{align*}
Following Nekrasov-Piazzalunga, we twist the vertex term $\mathsf{v}_\pi$ to include the $K$-theoretic insertion in the definition of Nekrasov genus
\begin{align*}
     \tilde{\mathsf{v}}_\pi:= Z_\pi-\overline{Z}_\pi\cdot y- \overline{P}_{123}Z_\pi \overline{Z}_\pi\in K^0_\TT(\pt)\cong\frac{\BZ[t_1^{\pm 1},\dots, t_4^{\pm 1}, y]}{(t_1t_2t_3t_4-1)}.
\end{align*}
Then by \eqref{eqn: localization with []} the invariants \eqref{eqn: inv loc ordinary}\footnote{For the general case where $\mathcal{O}_{\BC^4}$ is replaced by a $\TT$-equivariant line bundle $L$, one just needs to substitute $y$ by $y\cdot L^*|_Z$ at every fixed point $Z\in  \Hilb^n(\BC^4)^\TT$, where we identify $L^*|_Z$ with its character.} are computed as (cf. \cite[Thm. 1.13]{CKM})
\begin{align*}
    I_{n,0}(\BC^4,\oO_{\BC^4}, y)=\sum_{|\pi|=n} (-1)^{\sigma_\pi} [-\tilde{\mathsf{v}}_\pi].
\end{align*}
Here $(-1)^{\sigma_\pi}$ is a choice of sign at the fixed point $\pi$. It is proved by the second-named author and Rennemo \cite{KR} that there exist natural \emph{global} orientations on $\Hilb^n(\mathbb{C}^4)$ (and the virtual normal bundle at the fixed points) such that the induced orientation at a fixed point $\pi$ is given by
\begin{align*}
    \sigma_\pi=|\pi|+|\{(a,a,a,d)\in \pi: a<d\}|.
\end{align*}
This recovers the sign rule originally discovered by Nekrasov-Piazzalunga \cite{NP}.

\subsubsection{Orbifold vertex formalism}

Let $G<(\mathbb{C}^*)^4 \cap  \mathrm{SU}(4)$ be a finite abelian group, $R$ a $G$-representation, and $W\in \Hilb^R([\BC^4/G])^\TT\subset  \Hilb^{\dim R}(\BC^4)^\TT$ a $\TT$-fixed point corresponding to an $R$-coloured solid partition $\pi$. Then $T_{\BC^4, W}^{\vir}$ is naturally a $G\times \TT$-representation. At the level of fixed points, we have 
\begin{align*}
  T^{\vir}_{[\BC^4/G],W}=(T^{\vir}_{\BC^4,W})^{G}, 
\end{align*}
where $(\cdot)^G$ denotes the $G$-fixed part. Since the involution $\overline{(\cdot)}$ on $ K^0_\TT(\pt)$ commutes with taking the $G$-fixed part, we have
\begin{align*}
    (T_{\BC^4,W}^{\vir})^G=\mathsf{v}_\pi^G+\overline{\mathsf{v}_\pi^G},
\end{align*}
i.e.~$\mathsf{v}_\pi^G$ is a square root of the virtual tangent bundle $T^{\vir}_{[\BC^4/G],W}$ at $W\in \Hilb^R([\BC^4/G])^\TT$. 
Therefore  the invariants \eqref{eqn: loc inv orbifold}
are computed as 
\begin{align*}
    I_{R}(\BC^4,\oO_{\BC^4}, y)=\sum_{|\pi|_G=R} (-1)^{\sigma^G_\pi} [-\tilde{\mathsf{v}}^G_\pi],
\end{align*}
where the sum is over all $R$-coloured solid partitions and $ (-1)^{\sigma^G_\pi}$ is a choice of sign.\footnote{In all the computations performed in \S \ref{sec: orbifold partition functions}, the correct sign is $|\pi|_{R_0}+|\{(a,a,a,d)\in \pi: a<d\}|$, where $R_0$ is the trivial representation. We expect that these signs are induced from global orientations in a similar fashion to the case of $\Hilb^n(\BC^4)$ studied in \cite{KR}.}

Denote the irreducible representations of $G$ by $R_0,\dots, R_r$. The partition function of $K$-theoretic orbifold Donaldson-Thomas invariants takes the following explicit form
\begin{align*}
    \zZ^{\DT}_{[\BC^4/G], \oO}(y,q_0,\dots, q_r)=\sum_{\pi}(-1)^{\sigma^G_\pi}[-\tilde{\mathsf{v}}^G_\pi]\cdot q_0^{|\pi|_{R_0}}\cdots q_r^{|\pi|_{R_r}}.
\end{align*}
In the case of a general toric Calabi-Yau 4-orbifold, the invariants are easily computed through the orbifold vertex formalism by considering the localized contribution of each toric chart with suitable change of variables (cf.~\cite[Eqn.~(2.8)]{Mon}).

\subsection{Examples} \label{sec: orbivertexexamples}

We compute some examples using the orbifold vertex formalism.
\begin{example}
Let $\BZ_r$ act on $\BC^{4}$ by 
\begin{align*}
    \epsilon\cdot (x_1, x_2, x_3, x_4)=(\epsilon x_1, \epsilon^{-1} x_2, x_3, x_4),
\end{align*}
where $\epsilon=e^{2\pi \sqrt{-1} / r}$ is a primitive $r$-th root of unity and we denote by $R_1$ the irreducible $\BZ_r$-representation corresponding to $\epsilon$.
Given a solid partition $\pi$, each box $(i,j,k,l)\in \pi$ has colour $R_1^{i-j}$.

Let $W\in\Hilb^R([\BC^4/{\BZ_r}])^\TT $ be a fixed point corresponding to an $R$-coloured solid partition $\pi$. 
As a $(\BZ_r\times \TT)$-representation, the space of global sections $H^0(W,\oO_W)$ is given by
\begin{align*}
    W_\pi&:=\sum_{(i,j,k,l)\in \pi}R_1^{i-j}t_1^i t_2^j t_3^k t_4^l\\ 
    &=\sum_{l=0}^{r-1}W^{(l)}_{\pi}\cdot R_1^{l},
\end{align*}
where $R=\sum_{l=0}^{r-1}\dim W^{(l)}_{\pi}\cdot R_1^{l}$. An easy computation leads to 
\begin{multline*}
    \tilde{\mathsf{v}}^{\BZ_r}_\pi=W_\pi^{(0)}-\overline{W_\pi^{(0)}}y-(1-t_3^{-1})\left((1+t_1^{-1}t_2^{-1})\cdot\sum_{l+k=0\, \mathrm{mod}\, r}W_\pi^{(l)}\overline{W_\pi^{(k)}}\right.\\
    \left.-t_1^{-1}\cdot \sum_{l+k=1 \, \mathrm{mod}\, r}W_\pi^{(l)}\overline{W_\pi^{(k)}}-t_2^{-1}\cdot \sum_{l+k=-1 \, \mathrm{mod}\, r}W_\pi^{(l)}\overline{W_\pi^{(k)}}\right).
\end{multline*}
\end{example}
\begin{example}
Let $\BZ_2\times \BZ_2=\langle g_1,g_2\rangle $ act on $\BC^{4}$ by 
\begin{align*}
    g_1\cdot (x_1, x_2, x_3, x_4)=( x_1, - x_2, -x_3, x_4), \quad  g_2\cdot (x_1, x_2, x_3, x_4)=( -x_1,  x_2, -x_3, x_4),
\end{align*}
and denote by $R_{00}, R_{10}, R_{01}, R_{11}$ the induced irreducible $(\BZ_2\times \BZ_2)$-representations.
Given a solid partition $\pi$, we assign to each box $(i,j,k,l)\in \pi$ the colour $R_{10}^{i}R_{01}^{j}R_{11}^{k}$ with the obvious relations 
$$R_{10}R_{01}=R_{11}, \quad R_{ab}^2=R_{00}. $$
Let $W\in\Hilb^R([\BC^4/\BZ_2\times \BZ_2])^\TT $ be a fixed point corresponding to an $R$-coloured solid partition $\pi$. As a $(\BZ_2\times \BZ_2\times \TT)$-representation, the space of global sections $H^0(W,\oO_W)$ is given by
\begin{align*}
    W_\pi&:=\sum_{(i,j,k,l)\in \pi}R_{10}^{i}R_{01}^{j}R_{11}^{k} t_1^i t_2^j t_3^k t_4^l\\
    &=\sum_{0\leqslant  a,b\leqslant  1}W^{(ab)}_{\pi}\cdot R_{ab},
\end{align*}
where $R=\sum_{0\leqslant  a,b\leqslant  1}\dim W^{(ab)}_{\pi}\cdot R_{ab}$. An easy computation leads to 
\begin{multline*}
    \tilde{\mathsf{v}}^{\BZ_2\times \BZ_2}_\pi=W_\pi^{(00)}-\overline{W_\pi^{(00)}}y-(1-t_1^{-1}t_2^{-1}t_3^{-1})\left(\sum_{0\leqslant  a,b\leqslant  1}W_\pi^{(ab)}\overline{W_{\pi}^{(ab)}}\right) \\
    -(-t_1^{-1}+t_2^{-1}t_3^{-1})\left(\sum_{0\leqslant  a,b\leqslant  1}W_\pi^{(ab)}\overline{W_{\pi}^{(a-1,b)}} \right) 
    -(-t_2^{-1}+t_1^{-1}t_3^{-1})\left(\sum_{0\leqslant  a,b\leqslant  1}W_\pi^{(ab)}\overline{W_{\pi}^{(a,b-1)}} \right) \\
    -(-t_3^{-1}+t_1^{-1}t_2^{-1})\left(\sum_{0\leqslant  a,b\leqslant  1}W_\pi^{(ab)}\overline{W_{\pi}^{(a-1,b-1)}} \right),
\end{multline*}
where all superscript are considered modulo 2.
\end{example}
\begin{example}
Let $\BZ_3$ act on $\BC^{4}$ by 
\begin{align*}
    \epsilon\cdot (x_1, x_2, x_3, x_4)=(\epsilon x_1, \epsilon x_2, \epsilon x_3, x_4),
\end{align*}
where $\epsilon=e^{2\pi \sqrt{-1} /3}$ is a primitive root of unity and denote by $R_1$ the irreducible $\BZ_3$-representation corresponding to $\epsilon$.
Given a solid partition $\pi$, we assign to each box $(i,j,k,l)\in \pi$ the colour $R_1^{i+j+k}$.

Let $W\in\Hilb^R([\BC^4/{\BZ_3}])^\TT $ be a fixed point corresponding to an $R$-coloured solid partition $\pi$. 
As a $(\BZ_3\times \TT)$-representation, the space of global sections $H^0(W,\oO_W)$ is given by 
\begin{align*}
    W_\pi&=\sum_{(i,j,k,l)\in \pi}R_1^{i+j+k}t_1^i t_2^j t_3^k t_4^l\\
    &=\sum_{l=0}^{2}W^{(l)}_{\pi}\cdot R_1^{l},
\end{align*}
where $R=\sum_{l=0}^{2}\dim W^{(l)}_{\pi}\cdot R_1^{l}$. An easy computation leads to 
\begin{multline*}
    \tilde{\mathsf{v}}^{\BZ_3}_\pi=W_\pi^{(0)}-\overline{W_\pi^{(0)}}y-(1-t_1^{-1}-t_2^{-1}-t_3^{-1})\left(\sum_{l=0}^{2}W_\pi^{(l)}\overline{W_\pi^{(l)}}\right)\\
    +(t_1^{-1}+t_2^{-1}+t_3^{-1})\left(\sum_{l=0}^{2}W_\pi^{(l)}\overline{W_\pi^{(l-1)}}\right)
    -(t_1^{-1}t_2^{-1}+t_1^{-1}t_3^{-1}+t_2^{-1}t_3^{-1})\left(\sum_{l=0}^{2}W_\pi^{(l)}\overline{W_\pi^{(l+1)}}\right).
\end{multline*}
\end{example}

\subsection{Orbifold partition functions}\label{sec: orbifold partition functions}

Let $G < \mathrm{SU}(4)$ be a finite abelian subgroup with elements of age no greater than 1. By Proposition \ref{classify g=1 abelian} and Remark \ref{rmk: noZrinSO(3)}, the only cases to consider are $\mathbb{Z}_r < \mathrm{SU}(2) < \mathrm{SU}(4)$, where $r \geqslant 1$, and $\mathbb{Z}_2 \times \mathbb{Z}_2 < \mathrm{SO}(3) < \mathrm{SU}(4)$. 
We conjecture  closed expressions for the Nekrasov genera of $[\BC^4/\BZ_r]$ and $[\BC^4/\BZ_2\times \BZ_2]$. Our expressions simultaneously generalize Nekrasov's conjecture for $\mathbb{C}^4$ \cite{Nek} and Cirafici's conjectures \cite[Eqn.~(5.37),~(5.56)]{Cir} for $ [\BC^3/\BZ_r], [\BC^3/\BZ_2\times \BZ_2]$ in the physics literature.
\begin{conj}\label{conj: closed formula orbifold vertex}
There exist orientations such that
\begin{align*}
\mathcal{Z}^{\DT}_{[\mathbb{C}^4/\BZ_{r}],\oO}(y,q_0,\ldots,q_{r-1}) = \Exp\left(\mathcal{F}_r(t, y, q_{(r)})+\mathcal{F}^{\mathrm{col}}_{r}(t,y,q_0,\dots, q_{r-1})\right),
\end{align*}
where 
\begin{align*}
\mathcal{F}(t, y, q)&:=\frac{[t_1t_2][t_2t_3][t_1t_3]}{[t_1][t_2][t_3][t_4]}\cdot\frac{[y]}{[y^{\frac{1}{2}}q][y^{\frac{1}{2}}q^{-1}]},\\
    \mathcal{F}_r(t, y, q)&:=\sum_{k=0}^{r-1}\mathcal{F}(t_{1}^{r-k}t_2^{-k}, t_2^{k+1}t_1^{-r+k+1}, t_3, t_4, y, q),\\
    \mathcal{F}^{\mathrm{col}}_{r}(t,y,q_0,\dots, q_{r-1})&:=\sum_{0<i\leqslant  j<r}\left(q_{[i,j]}+q_{[i,j]}^{-1}\right)\frac{[t_1 t_2][y]}{[t_3][t_4][y^{\frac{1}{2}}q_{(r)}][y^{\frac{1}{2}}q_{(r)}^{-1}]},
\end{align*}
with  $q_{[i,j]}:=q_i\cdots q_j $ and $q_{(r)}:=q_0\cdots q_{r-1}$.
\end{conj}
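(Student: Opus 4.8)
The plan is to reduce the identity to the crepant resolution correspondence (Conjecture \ref{conj: CRC intro}) together with the closed formula for stable pairs (Conjecture \ref{conj: PT of G non-abelian}), both of which are more tractable than the orbifold sum itself; absent those two inputs, the identity is instead to be verified order by order through the orbifold vertex formalism, as in Proposition \ref{prop: r=1}. Concretely, by \S\ref{sect on orb vertex} and the computation of $\tilde{\mathsf{v}}^{\BZ_r}_\pi$ in \S\ref{sec: orbivertexexamples}, the left-hand side is the coloured solid-partition sum
\[
\mathcal{Z}^{\DT}_{[\mathbb{C}^4/\BZ_r],\oO}(y,q_0,\ldots,q_{r-1}) = \sum_{\pi} (-1)^{\sigma^G_\pi}\,[-\tilde{\mathsf{v}}^{\BZ_r}_\pi]\; q_0^{|\pi|_{R_0}}\cdots q_{r-1}^{|\pi|_{R_{r-1}}},
\]
the sum running over $\BZ_r$-coloured solid partitions, and the goal is to identify it with the plethystic expansion of $\mathrm{Exp}(\mathcal{F}_r+\mathcal{F}^{\mathrm{col}}_r)$.

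For the structural route I would exploit that $X=S\times\mathbb{C}^2$, with $S\to\mathbb{C}^2/\BZ_r$ the minimal ($A_{r-1}$) resolution, is a toric Calabi-Yau 4-fold with exactly $r$ torus-fixed points; reading off the standard toric data, the $k$-th chart has local $\mathbb{C}^4$-weights $(t_1^{r-k}t_2^{-k},\,t_2^{k+1}t_1^{-r+k+1},\,t_3,\,t_4)$, whose product is $t_1t_2t_3t_4=1$. Hence the degree-zero factor $\mathcal{Z}^{\DT}_{X,\oO}(y,0,q)$ factorises over these charts into $\mathbb{C}^4$ partition functions, each equal to $\mathrm{Exp}(\mathcal{F})$ by the $r=1$ base case; so this factor is exactly $\mathrm{Exp}(\mathcal{F}_r)$. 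For the coloured part I would feed Proposition \ref{prop: positive root SU(2)} into Conjecture \ref{conj: PT of G non-abelian}: the positive roots of $A_{r-1}$ are the classes $\beta_{[i,j]}$ with $0<i\le j<r$, each carrying $P_{1,\beta}(X,\oO_X,t_4)=\pm[t_3t_4]/[t_3]$; since $[t_3t_4]=-[t_1t_2]$ on $\{t_1t_2t_3t_4=1\}$, the substitution $Q^{\beta_{[i,j]}}=q_{[i,j]}$, $q=q_{(r)}$ turns the product $\mathcal{Z}^{\PT}_{X,\oO}(y,Q,q)\,\mathcal{Z}^{\PT}_{X,\oO}(y,Q^{-1},q)$ into $\mathrm{Exp}(\mathcal{F}^{\mathrm{col}}_r)$, the two factors supplying the $q_{[i,j]}$ and $q_{[i,j]}^{-1}$ terms respectively. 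Multiplying the three factors via Conjecture \ref{conj: CRC intro} produces the claimed formula; this algebraic matching is the content of Theorem \ref{thm on crc}.

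The main obstacle is that this route is only as strong as Conjectures \ref{conj: CRC intro} and \ref{conj: PT of G non-abelian}, and the correspondence cannot be checked degree by degree because of the inversion $Q\mapsto Q^{-1}$. A self-contained proof therefore requires either establishing those two conjectures independently, or resumming the coloured solid-partition sum directly — which is delicate, as there is no general product formula enumerating coloured solid partitions and the weights $[-\tilde{\mathsf{v}}^G_\pi]$ carry the subtle orientation signs $\sigma^G_\pi$. In practice I would first prove the statement computationally by truncating in each $q_i$ and comparing the vertex sum with the expansion of the right-hand side (the method behind Proposition \ref{prop: r=1}), and regard a conceptual proof — lifting the cohomological limit of \S\ref{sec: Limit of theory} back to the full Nekrasov genus via a $K$-theoretic rigidity argument — as the harder, longer-term goal.
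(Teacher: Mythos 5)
This statement is a conjecture, and the paper does not prove it: its only direct evidence is Proposition \ref{prop: r=1}, an order-by-order verification via the orbifold vertex formalism ($r=1$ being the theorem of Kool--Rennemo, $r=2,3$ checked by computer to the stated orders). Your fallback route --- truncating the coloured solid-partition sum $\sum_\pi (-1)^{\sigma^G_\pi}[-\tilde{\mathsf{v}}^{\BZ_r}_\pi]\,q_0^{|\pi|_{R_0}}\cdots q_{r-1}^{|\pi|_{R_{r-1}}}$ and comparing with the plethystic expansion of the right-hand side --- is exactly what the paper does, so on that score you match the paper's approach.

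Your structural route deserves one comment on logical direction. The algebraic matching you describe is correct: the $r$ toric charts of $X=S\times\BC^2$ carry the weights $(t_1^{r-k}t_2^{-k},t_2^{k+1}t_1^{-r+k+1},t_3,t_4)$ appearing in $\mathcal{F}_r$, so the degree-zero factor is $\Exp(\mathcal{F}_r)$ by the $r=1$ case; and feeding $P_{1,\beta}(X,\oO_X,t_4)=\pm[t_3t_4]/[t_3]$ from Proposition \ref{prop: positive root SU(2)} into Conjecture \ref{conj: PT of G non-abelian}, with $[t_3t_4]=-[t_1t_2]$ on the Calabi--Yau torus and positive roots $\beta_{[i,j]}$, turns $\mathcal{Z}^{\PT}(y,Q,q)\cdot\mathcal{Z}^{\PT}(y,Q^{-1},q)$ into $\Exp(\mathcal{F}^{\mathrm{col}}_r)$. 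This is precisely the computation behind Theorem \ref{thm on crc} --- but the paper runs it in the opposite direction, deducing the crepant resolution correspondence (Conjecture \ref{conj: crepant resolution for CY4}) \emph{from} the PT formula together with the present orbifold formula, whereas you take the crepant resolution correspondence as an input to derive the orbifold formula. Since all three statements are conjectural and the identity shows any two imply the third, your framing is not wrong, but it is the less natural direction here: the crepant resolution correspondence is the most speculative of the three (it cannot even be tested order by order because of the $Q\mapsto Q^{-1}$ inversion, as you note), so using it as a hypothesis buys nothing toward an actual proof. You correctly flag this, and your assessment of where the genuine difficulty lies (resumming the coloured partition sum with its orientation signs) is accurate.
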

\begin{conj}\label{conj: closed formula orbifold vertex D4}
There exist orientations such that
\begin{multline*}
\mathcal{Z}^{\DT}_{[\mathbb{C}^4/\BZ_{2}\times \BZ_2],\oO}(y,q_{00},q_{10}, q_{01}, q_{11}) =\Exp\left(\mathcal{F}_{2,2}(t, y, q_{(2,2)})+\mathcal{F}^{\mathrm{col}}_{2,2}(t,y,q_{00},q_{10},q_{01}, q_{11}) \right),
\end{multline*}
where 
\begin{multline*}
     \mathcal{F}_{2,2}(t_1,t_2,t_3,t_4, y, q):=\mathcal{F}\left(t_1^2, t_2^{2}, \frac{t_3}{t_1t_2}, t_4, y, q\right)+\mathcal{F}\left(t_1^2, \frac{t_2}{t_1t_3}, t_3^{2}, t_4, y, q\right)\\
    +\mathcal{F}\left(\frac{t_1}{ t_2t_3}, t_2^{2}, t_3^{2}, t_4, y, q\right)+\mathcal{F}\left(\frac{t_2t_3}{t_1}, \frac{t_1t_3}{t_2},\frac{t_1t_2}{t_3} , t_4, y, q\right),
\end{multline*}
\begin{multline*}
     \mathcal{F}^{\mathrm{col}}_{2,2}(t,y,q_{00},q_{10},q_{01}, q_{11}):=\frac{[y]}{[t_4][y^{\frac{1}{2}} q_{(2,2)}][y^{\frac{1}{2}} q_{(2,2)}^{-1}]}\Bigg(\frac{[t_1t_2t_3^{-1}]}{[t_3^{2}]}(q_{10}q_{01}+q_{10}^{-1}q_{01}^{-1})\\
     +\frac{[t_1t_2^{-1}t_3]}{[t_2^{2}]}(q_{10}q_{11}+q_{10}^{-1}q_{11}^{-1})+\frac{[t_1^{-1}t_2t_3]}{[t_1^{2}]}(q_{01}q_{11}+q_{01}^{-1}q_{11}^{-1})\\
     +q_{10}+q_{01}+q_{11}+q_{10}q_{01}q_{11}+q_{10}^{-1}+q_{01}^{-1}+q_{11}^{-1}+q_{10}^{-1}q_{01}^{-1}q_{11}^{-1}\Bigg) ,
\end{multline*}
with   $q_{(2,2)}:=q_{00}q_{10}q_{01}q_{11}$.
\end{conj}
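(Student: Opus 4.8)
The plan is to prove Conjecture~\ref{conj: closed formula orbifold vertex D4} by organizing its two summands $\mathcal{F}_{2,2}$ and $\mathcal{F}^{\mathrm{col}}_{2,2}$ according to the two mechanisms that produce them on the crepant resolution side. Recall from \S\ref{sect on orb vertex} that the left-hand side is literally the sum $\sum_\pi (-1)^{\sigma^G_\pi}[-\tilde{\mathsf{v}}^G_\pi]\,q^{|\pi|_G}$ over $\BZ_2\times\BZ_2$-coloured solid partitions, with $\tilde{\mathsf{v}}^{\BZ_2\times\BZ_2}_\pi$ the $G$-fixed vertex term computed in \S\ref{sec: orbivertexexamples}. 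The first step is to fix orientations: I would take the signs $\sigma^G_\pi = |\pi|_{R_{00}} + |\{(a,a,a,d)\in\pi : a<d\}|$ predicted by the footnote in \S\ref{sect on orb vertex}, reducing the statement to a fully sign-pinned identity, and check (as in Proposition~\ref{prop: r=1}) that these are the signs matching low-order coefficients.

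The conceptual heart is to invoke the crepant resolution correspondence of Conjecture~\ref{conj: CRC intro}, which rewrites the left-hand side as $\mathcal{Z}^{\DT}_{X,\oO_X}(y,0,q)\cdot \mathcal{Z}^{\PT}_{X,\oO_X}(y,Q,q)\cdot\mathcal{Z}^{\PT}_{X,\oO_X}(y,Q^{-1},q)$ under $Q^{\beta_i}=q_i$ and $q=q_{(2,2)}$, where $X=Y\times\BC$ is the toric crepant resolution. The degree-zero factor $\mathcal{Z}^{\DT}_{X,\oO_X}(y,0,q)$ I would compute directly: since $X$ is a smooth toric Calabi-Yau $4$-fold with exactly four $\TT$-fixed points (the four triangles in the triangulation of the $D_4$ junior simplex), its degree-zero partition function factorizes over these charts into four copies of Nekrasov's $\BC^4$ vertex \cite{Nek}. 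Reading off the tangent weights at each chart from the web diagram yields exactly the four shifted arguments of $\mathcal{F}$ appearing in $\mathcal{F}_{2,2}$, so that $\mathcal{Z}^{\DT}_{X,\oO_X}(y,0,q)=\Exp(\mathcal{F}_{2,2})$.

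For the curve part I would feed the stable-pair closed form of Conjecture~\ref{conj: PT of G non-abelian} into the two $\PT$ factors, so that $\mathcal{Z}^{\PT}_{X,\oO_X}(y,Q,q)=\Exp\bigl(\sum_\beta \tfrac{-P_{1,\beta}(X,\oO_X,t_4)[y]\,Q^\beta}{[t_4][y^{\frac{1}{2}}q][y^{\frac{1}{2}}q^{-1}]}\bigr)$, the sum being over the classes $\beta$ corresponding to positive roots, whose explicit values $P_{1,\beta}(X,\oO_X,y)$ for $G=\BZ_2\times\BZ_2$ are tabulated just after Remark~\ref{rmk: noZrinSO(3)}. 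Multiplying the two factors and using additivity of $\Exp$ collapses them into one plethystic exponential whose summand is the $Q\leftrightarrow Q^{-1}$ symmetrization; under $Q^{\beta_i}=q_i$ each $Q^\beta+Q^{-\beta}$ becomes a symmetric pair such as $q_{10}q_{01}+q_{10}^{-1}q_{01}^{-1}$, $q_{10}+q_{10}^{-1}$, or $q_{10}q_{01}q_{11}+q_{10}^{-1}q_{01}^{-1}q_{11}^{-1}$, and substituting the tabulated $P_{1,\beta}$ values reproduces $\mathcal{F}^{\mathrm{col}}_{2,2}$ term by term, the seven surviving curve classes giving the seven summands in the parenthesis. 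Assembling the three factors yields $\Exp(\mathcal{F}_{2,2}+\mathcal{F}^{\mathrm{col}}_{2,2})$.

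The main obstacle is that this argument is conditional on the crepant resolution correspondence (Conjecture~\ref{conj: CRC intro}), which is unproven and, as noted in the introduction, cannot even be checked order by order because of the inversion $Q\mapsto Q^{-1}$; Theorem~\ref{thm on crc} supplies only the algebraic \emph{consistency} of the three closed forms under the correspondence, not the correspondence itself. To make the proof unconditional one must instead attack the solid-partition identity directly, and the realistic route is to establish rationality of $\mathcal{Z}^{\DT}_{[\BC^4/G],\oO}$ in $q_{(2,2)}$ together with the cohomological dimensional-reduction limit of \S\ref{sec: Limit of theory} to the topological vertex of the Calabi-Yau $3$-orbifold $[\BC^3/(\BZ_2\times\BZ_2)]$, pinning down the $K$-theoretic refinement from its pole structure and finitely many coefficients computed by the orbifold vertex (Proposition~\ref{prop: r=1}). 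Controlling the full $K$-theoretic dependence on all four variables $q_{00},q_{10},q_{01},q_{11}$, rather than merely the cohomological specialization, is where the genuine difficulty lies.
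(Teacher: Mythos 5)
The statement you are trying to prove is stated in the paper as a \emph{conjecture}, and the paper offers no proof of it: its only direct support is Proposition \ref{prop: r=1}, a Mathematica verification of the coloured-solid-partition sum $\sum_{\pi}(-1)^{\sigma^G_\pi}[-\tilde{\mathsf{v}}^G_\pi]q^{|\pi|_G}$ modulo total degree $5$ in the $q_{ab}$, together with indirect consistency checks (dimensional reduction to Cirafici's $3$-fold formula, the cohomological limit matching Szabo--Tirelli). Your conditional derivation is structurally sound as far as it goes: the four toric charts of $X=Y\times\BC$ do account for the four shifted copies of $\mathcal{F}$ in $\mathcal{F}_{2,2}$ (with the degree-zero factor supplied by the $r=1$ Nekrasov formula, which is a theorem of Kool--Rennemo), and the seven curve classes with nonvanishing $P_{1,\beta}$ tabulated after Remark \ref{rmk: noZrinSO(3)} do symmetrize under $Q\mapsto Q^{-1}$ into the seven summands of $\mathcal{F}^{\mathrm{col}}_{2,2}$. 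But this computation is exactly the content of Theorem \ref{thm on crc}, which the paper runs in the \emph{opposite} logical direction: it assumes Conjectures \ref{conj: PT of G non-abelian} and \ref{conj: closed formula orbifold vertex D4} and concludes the crepant resolution correspondence. Using the crepant resolution correspondence as an input to deduce Conjecture \ref{conj: closed formula orbifold vertex D4} is therefore circular within the paper's logical architecture, and in any case both inputs (Conjecture \ref{conj: CRC intro} and Conjecture \ref{conj: PT of G non-abelian}) are themselves open.

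The genuine gap is thus that no unconditional argument is produced. Your proposed escape route --- rationality of the partition function in $q_{(2,2)}$ plus the dimensional-reduction and cohomological limits of \S\ref{sec: Limit of theory}, combined with finitely many vertex computations --- is only sketched, and it would not suffice as stated: the limits $y=t_4$ and $b\to 0$ each lose information (the full $K$-theoretic dependence on $y$ and on all four variables $q_{00},q_{10},q_{01},q_{11}$ is not recoverable from its specializations without an a priori bound on the shape of the answer), and no such bound is established. You correctly identify this as the real difficulty, but identifying it is not the same as overcoming it. The honest conclusion is that your proposal reproduces the paper's consistency evidence rather than supplying a proof, and the statement remains a conjecture.
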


Using an implementation of the orbifold vertex formalism into a Mathematica program, we verified these conjectures up to the following orders: 
\begin{prop}\label{prop: r=1}
Conjecture \ref{conj: closed formula orbifold vertex} holds for:
\begin{itemize}
\item $r=1$ (theorem in \cite{KR}), 
\item $r=2$ modulo $q_0^{i_0}q_1^{i_1}$ with $i_0+i_1= 6$,
\item $r=3$ modulo $q_0^{i_0}q_1^{i_1}q_2^{i_2}$ with $i_0+i_1+i_2= 5$.     
\end{itemize}
Conjecture \ref{conj: closed formula orbifold vertex D4} holds modulo $q_{00}^{i_{00}}q_{10}^{i_{10}}q_{01}^{i_{01}}q_{11}^{i_{11}}$ with $i_{00}+i_{10}+i_{01}+i_{11}=5$.
\end{prop}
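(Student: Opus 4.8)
The plan is to carry out a finite, explicit computation using the orbifold vertex formalism of \S\ref{sect on orb vertex}. By Lemma \ref{lemma: fixed locus stack} and the combinatorial description of the fixed locus, for each $G$-representation $R$ the set $\Hilb^R([\BC^4/G])^\TT$ is finite, reduced and $0$-dimensional, indexed by $R$-coloured solid partitions $\pi$ with $|\pi|_G = R$; moreover the virtual tangent space at each such $\pi$ is $\TT$-movable, so that each bracket $[-\tilde{\mathsf{v}}_\pi^G]$ is well defined. Hence the localization formula \eqref{eqn: localization with []} expresses the invariant as the finite sum $I_R([\BC^4/G], \oO, y) = \sum_{|\pi|_G = R} (-1)^{\sigma_\pi^G} [-\tilde{\mathsf{v}}_\pi^G]$, where $\tilde{\mathsf{v}}_\pi^G$ is given by the explicit closed forms computed in the Examples of \S\ref{sect on orb vertex} (the first Example for $\BZ_r < \mathrm{SU}(2)$ and the second for $\BZ_2 \times \BZ_2$), and $\sigma_\pi^G = |\pi|_{R_0} + |\{(a,a,a,d) \in \pi : a < d\}|$ as recorded there. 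Since the conjectures are to be checked only modulo a fixed total order in the $q$-variables, only finitely many $\pi$ contribute.

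First I would enumerate, for every colour-vector with total size below the stated cutoff, all solid partitions $\pi \subset \Z_{\geqslant 0}^4$ of that size, recording the colour of each box $(i,j,k,l)$ by the rule of the relevant Example. For each $\pi$ I would form the $G\times\TT$-character $W_\pi$ of the global sections, split it into isotypic pieces $W_\pi^{(l)}$, assemble $\tilde{\mathsf{v}}_\pi^G$ from the Example formula, and apply the operator $[-\,\cdot\,]$ together with the sign $(-1)^{\sigma_\pi^G}$. Summing over $\pi$ and weighting by the appropriate monomial in the $q$-variables produces the left-hand side $\zZ^{\DT}_{[\BC^4/G], \oO}$ truncated at the required order, as a rational function in $t_1^{\pm 1/2}, \ldots, t_4^{\pm 1/2}, y^{\pm 1/2}$ modulo the Calabi-Yau relation $t_1 t_2 t_3 t_4 = 1$.

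Next I would expand the conjectural right-hand side to the same order. This means unwinding the plethystic exponential \eqref{equ on ple}, namely writing $\Exp(\mathcal{F} + \mathcal{F}^{\mathrm{col}}) = \exp\big(\sum_{n \geqslant 1} \tfrac{1}{n}(\mathcal{F} + \mathcal{F}^{\mathrm{col}})|_{p \mapsto p^{n}}\big)$ and collecting the coefficient of each $q$-monomial up to the cutoff; because the power-sum substitution $p \mapsto p^{n}$ mixes $q$-degrees, each coefficient gathers contributions from several $n$, which must all be accounted for. Comparing the two truncated series coefficient-by-coefficient, and verifying equality as rational functions after clearing denominators and imposing $t_1 t_2 t_3 t_4 = 1$, yields the proposition; the case $r=1$ is already the theorem of \cite{KR}.

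The main obstacle I anticipate is computational rather than conceptual: the number of coloured solid partitions grows rapidly with size, and the individual contributions $[-\tilde{\mathsf{v}}_\pi^G]$ are large ratios of $[\,\cdot\,]$-factors whose symbolic simplification, subject to $t_1 t_2 t_3 t_4 = 1$, is the expensive step and ultimately limits the orders one can reach. A secondary subtlety is the consistent handling of signs: one must check that the rule $\sigma_\pi^G$ above is precisely what makes the localized sum match the conjectural closed form, since each square root $T_Z^{\half}$, and hence each bracket $[-\tilde{\mathsf{v}}_\pi^G]$, is a priori determined only up to sign at every fixed point. Reducing all characters modulo $t_1 t_2 t_3 t_4 = 1$ before applying $[-\,\cdot\,]$, and expanding both sides only to the stated order, keeps the verification within reach of the Mathematica implementation.
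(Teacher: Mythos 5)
Your proposal is correct and coincides with the paper's own argument: the proposition is established by implementing the orbifold vertex formalism into a Mathematica program, enumerating the $R$-coloured solid partitions up to the stated cutoffs, summing the contributions $(-1)^{\sigma_\pi^G}[-\tilde{\mathsf{v}}_\pi^G]$ with the sign rule $\sigma_\pi^G=|\pi|_{R_0}+|\{(a,a,a,d)\in\pi:a<d\}|$, and comparing against the truncated plethystic exponential of the conjectural closed form. The only remark worth adding is that the paper treats these signs as a \emph{choice} of orientation at the fixed points (expected, but not proved, to come from global orientations), so the verification is "there exist orientations such that" rather than a check against a fixed canonical sign rule --- exactly the subtlety you flag at the end.
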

Finding closed expression for the partition functions for other abelian groups $G$ appears to be an intrinsically difficult problem, already at the level of coloured box counting (cf. \cite[\S 8]{Young},\cite[\S 3.4]{DOS}, \cite[\S 5.5]{Cir}).

\subsection{A crepant resolution conjecture}

Motivated by the crepant resolution conjecture in three dimensions \cite[Conj.~A.6]{Young} and the DT/PT correspondence on Calabi-Yau 4-folds \cite{CKM}, we conjecture a \emph{crepant resolution conjecture} for Nekrasov genera. 

Let $G < \mathrm{SU}(4)$ be a finite abelian subgroup with elements of age no greater than 1. By Proposition \ref{classify g=1 abelian} and Remark \ref{rmk: noZrinSO(3)}, the only cases to consider are $\mathbb{Z}_r < \mathrm{SU}(2) < \mathrm{SU}(4)$ and $\mathbb{Z}_2 \times \mathbb{Z}_2 < \mathrm{SO}(3) < \mathrm{SU}(4)$. 
Recall the partition functions \eqref{pt partition function} and \eqref{dt partition function}, \eqref{K-orbi DT}. 
\begin{conj}\label{conj: crepant resolution for CY4}
Let $G=\BZ_r<\mathrm{SU}(2) < \mathrm{SU(4)}$ or $\BZ_2\times \BZ_2<\mathrm{SO}(3) < \mathrm{SU(4)}$.
Denote by $X \to \BC^4/G$ the crepant resolution given by the Nakamura $G$-Hilbert scheme and let $\xX = [\BC^4 / G]$.
There exist orientations such that
\begin{align*}
    \mathcal{Z}^{\DT}_{\xX,\oO_{\xX}}(y,q_0,\ldots,q_{|G|-1})=\mathcal{Z}^{\DT}_{X,\oO_X}(y,0,q)\cdot \mathcal{Z}^{\PT}_{X,\oO_X}(y,Q,q)\cdot \mathcal{Z}^{\PT}_{X,\oO_X}(y,Q^{-1},q),
\end{align*}
under the change of variables $Q^{\beta_i}=q_i$ for $i=1,\dots, |G|-1$, $q=q_0\cdots q_{|G|-1}$, where $\beta_i$ is the curve class corresponding to the irreducible $G$-representation $R_i$ by Reid's generalized McKay correspondence \eqref{equ on mckay}.
\end{conj}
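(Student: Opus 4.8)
As the authors note, the identity cannot be checked coefficient-by-coefficient in $q$ because the substitution $Q\mapsto Q^{-1}$ on the right-hand side mixes all curve degrees. The plan is therefore to prove it \emph{conditionally}, which is the content of Theorem~\ref{thm on crc}: granting the closed formula of Conjecture~\ref{conj: PT of G non-abelian} for the stable pair series of $X$ and those of Conjectures~\ref{conj: closed formula orbifold vertex}, \ref{conj: closed formula orbifold vertex D4} for the orbifold series of $\xX$, both sides become explicit power series and the correspondence reduces to a closed-form identity. The key structural observation is that every factor appearing is a plethystic exponential \eqref{equ on ple}, so the \emph{multiplicative} identity of partition functions is equivalent to an \emph{additive} identity of free energies.

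\textbf{Reduction to free energies.} First I would rewrite all four series as $\Exp$ of explicit arguments. By Conjecture~\ref{conj: closed formula orbifold vertex} (resp.~\ref{conj: closed formula orbifold vertex D4}) the left-hand side equals $\Exp(\mathcal{F}_r+\mathcal{F}_r^{\mathrm{col}})$ (resp.~$\Exp(\mathcal{F}_{2,2}+\mathcal{F}_{2,2}^{\mathrm{col}})$). Writing $\Phi(Q)$ for the argument of the plethystic exponential in Conjecture~\ref{conj: PT of G non-abelian}, the relation $\Exp(A)\Exp(B)=\Exp(A+B)$ gives
\[
\mathcal{Z}^{\PT}_{X,\oO_X}(y,Q,q)\cdot\mathcal{Z}^{\PT}_{X,\oO_X}(y,Q^{-1},q)=\Exp\big(\Phi(Q)+\Phi(Q^{-1})\big).
\]
Setting $Q=0$ kills all curve classes, and I claim $\mathcal{Z}^{\DT}_{X,\oO_X}(y,0,q)=\Exp(\mathcal{F}_r)$ (resp.~$\Exp(\mathcal{F}_{2,2})$): the degree-zero series of the toric resolution $X$ factorizes over its $\TT$-fixed points by the vertex formalism of \cite{CKM}, and at each fixed point the local contribution is Nekrasov's $\BC^4$ partition function, equal to $\Exp(\mathcal{F})$ with the appropriate tangent weights by the Kool--Rennemo theorem \cite{KR} (the $r=1$ case of Conjecture~\ref{conj: closed formula orbifold vertex}). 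The substitutions built into $\mathcal{F}_r$ (e.g.~$t_1\mapsto t_1^{r-k}t_2^{-k}$, $t_2\mapsto t_2^{k+1}t_1^{-r+k+1}$) are precisely the tangent weights of the minimal resolution $S$ at its $k$-th fixed point, crossed with the trivial $t_3,t_4$ directions, so that $\sum_k\mathcal{F}(\cdots)=\mathcal{F}_r$. Granting these identifications, the entire correspondence collapses to
\[
\mathcal{F}_r^{\mathrm{col}}=\Phi(Q)+\Phi(Q^{-1})\quad\text{under}\quad Q^{\beta_i}=q_i,\ q=q_0\cdots q_{|G|-1}.
\]

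\textbf{The coloured identity.} I would verify this by substituting the explicit $K$-theoretic Gopakumar--Vafa invariants. For $G=\BZ_r<\mathrm{SU}(2)$, Proposition~\ref{prop: positive root SU(2)} gives $P_{1,\beta}(X,\oO_X,t_4)=\pm[t_3t_4]/[t_3]$ for every $\beta$ corresponding to a positive root and $0$ otherwise, \emph{independently} of the root, so
\[
\Phi(Q)+\Phi(Q^{-1})=\sum_{\beta}\frac{\mp[t_3t_4][y]\,(Q^{\beta}+Q^{-\beta})}{[t_3][t_4][y^{\frac12}q][y^{\frac12}q^{-1}]},
\]
the sum over positive roots. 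The Calabi--Yau relation $t_1t_2t_3t_4=1$ together with $[x^{-1}]=-[x]$ yields $[t_3t_4]=-[t_1t_2]$, converting each summand into $\pm(Q^\beta+Q^{-\beta})[t_1t_2][y]/([t_3][t_4][y^{\frac12}q][y^{\frac12}q^{-1}])$. Under the McKay change of variables \eqref{equ on mckay}, a positive root $\beta_{[i,j]}=\beta_i+\cdots+\beta_j$ gives $Q^{\beta_{[i,j]}}=q_i\cdots q_j=q_{[i,j]}$ and $q=q_{(r)}$, so the sum reproduces $\mathcal{F}_r^{\mathrm{col}}$ term by term, the orientations being fixed so that every sign is $+$. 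For $G=\BZ_2\times\BZ_2<\mathrm{SO}(3)$ the same scheme applies, now using the explicit values of $P_{1,\beta}$ computed for the seven positive roots (the three $\beta_{ab}$, the three pairwise sums, and $\beta_{01}+\beta_{10}+\beta_{11}$); these match, root by root, the four ``$[y]/[t_4]$'' terms and the three terms $[t_1t_2t_3^{-1}]/[t_3^2]$, $[t_1t_2^{-1}t_3]/[t_2^2]$, $[t_1^{-1}t_2t_3]/[t_1^2]$ of $\mathcal{F}_{2,2}^{\mathrm{col}}$ in Conjecture~\ref{conj: closed formula orbifold vertex D4}.

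\textbf{Main obstacle.} The genuine difficulty is not the algebra above but pinning down the compatibility of equivariant data and orientations across the two sides. The degree-zero factorization requires identifying the toric tangent weights at each fixed point of the resolution with the substitutions defining $\mathcal{F}_r$ and $\mathcal{F}_{2,2}$, and reconciling the resolution torus with the Calabi--Yau orbifold torus $\TT=\{t_1t_2t_3t_4=1\}$; this is precisely where the change of variables is forced. More delicate still is the orientation bookkeeping: each localized contribution carries a sign $\pm$ depending on a choice of square root and orientation (as in \S\ref{sec: virtual localization}), and the conjecture asserts the \emph{existence} of compatible orientations making all signs agree. Producing such a global choice, rather than verifying it case by case, is the crux, and is presumably governed by the global orientations of \cite{KR}. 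Finally, an unconditional proof would additionally require Conjectures~\ref{conj: PT of G non-abelian} and \ref{conj: closed formula orbifold vertex}, \ref{conj: closed formula orbifold vertex D4} themselves, which are established only in finitely many degrees (Propositions~\ref{prop: vertex formalism Z_r}, \ref{prop: r=1}); absent those, the correspondence can only be obtained in the conditional form of Theorem~\ref{thm on crc}.
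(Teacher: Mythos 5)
Your proposal is correct and matches the paper's treatment: the paper also only establishes the conditional statement (Theorem \ref{thm on crc}), whose proof is exactly the explicit comparison you describe --- rewrite everything as plethystic exponentials, identify $\mathcal{Z}^{\DT}_{X,\oO_X}(y,0,q)$ with $\Exp(\mathcal{F}_r)$ (resp.\ $\Exp(\mathcal{F}_{2,2})$) via localization to the toric charts and the $r=1$ case of Proposition \ref{prop: r=1}, and match the coloured free energy $\mathcal{F}^{\mathrm{col}}$ with $\Phi(Q)+\Phi(Q^{-1})$ using the explicit $K$-theoretic Gopakumar--Vafa invariants and the relation $[t_3t_4]=-[t_1t_2]$ under the change of variables. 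Your write-up is in fact more detailed than the paper's one-line argument, and your caveats about orientations and the conditional nature of the result are exactly the ones the paper records.
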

Note that for the change of variables in  Conjecture \ref{conj: crepant resolution for CY4} to make sense, one needs to assume that the partition function $\mathcal{Z}^{\PT}_{X,\oO_X}(y,Q,q)$ of stable pair invariants 
 is the plethystic exponential of a rational function in the $Q$ variables. 
This is implied by Conjecture \ref{conj: PT of G non-abelian} and Lemma \ref{lemma: fixed locus stable pairs}.

\begin{thm}\label{thm on crc}
Assume Conjecture \ref{conj: PT of G non-abelian} holds for $\BZ_r$ (resp.~for $\BZ_2\times \BZ_2$), and Conjecture \ref{conj: closed formula orbifold vertex} (resp.~Conjecture~\ref{conj: closed formula orbifold vertex D4}) holds. Then Conjecture \ref{conj: crepant resolution for CY4} holds for $G=\BZ_r$ (resp.~$\BZ_2\times \BZ_2$).
\end{thm}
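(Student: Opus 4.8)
The plan is to observe that, under the two assumed closed-form conjectures, every factor appearing in Conjecture \ref{conj: crepant resolution for CY4} is a plethystic exponential, so that the identity reduces to a matching of \emph{free energies}. The two stable-pair factors are plethystic exponentials by Conjecture \ref{conj: PT of G non-abelian}, the orbifold side $\mathcal{Z}^{\DT}_{\xX,\oO_\xX}$ by the assumed closed formula, and the degree-zero factor will be shown to be one in the vertex step below. Since $\Exp$ is a bijection from power series with vanishing constant term to those with constant term $1$, and
\[
\mathcal{Z}^{\DT}_{X,\oO_X}(y,0,q)\cdot \mathcal{Z}^{\PT}_{X,\oO_X}(y,Q,q)\cdot \mathcal{Z}^{\PT}_{X,\oO_X}(y,Q^{-1},q)
= \Exp\big(F_0 + F_{\PT}(Q) + F_{\PT}(Q^{-1})\big),
\]
where $F_0$ is the free energy of the degree-zero series and $F_{\PT}$ that of the stable-pair series, it suffices to prove, after the substitution $Q^{\beta_i}=q_i$, $q=q_{(r)}$, the two identities
\[
F_0 = \mathcal{F}_r, \qquad F_{\PT}(Q) + F_{\PT}(Q^{-1}) = \mathcal{F}^{\mathrm{col}}_r .
\]
I treat the case $G=\BZ_r$; the case $\BZ_2\times\BZ_2$ is structurally identical, using Conjecture \ref{conj: closed formula orbifold vertex D4} and $\mathcal{F}_{2,2}$, $\mathcal{F}^{\mathrm{col}}_{2,2}$.

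First I would establish the \emph{vertex matching} $\mathcal{Z}^{\DT}_{X,\oO_X}(y,0,q)=\Exp(\mathcal{F}_r)$. Since $X=S\times\BC^2$ is a smooth toric Calabi-Yau $4$-fold, the $\TT$-fixed locus of its degree-zero Hilbert scheme consists of tuples of monomial ideals supported at the torus-fixed points, so by the vertex formalism of \S\ref{sect on orb vertex} the partition function factors as a product over the fixed points $\{p_j\}$ of the local $\BC^4$-contributions. Each such contribution is the degree-zero Nekrasov genus of $\BC^4$ with the tangent weights at $p_j$, which by the $r=1$ case of Conjecture \ref{conj: closed formula orbifold vertex} (the theorem of Kool--Rennemo \cite{KR}) equals $\Exp(\mathcal{F}(\text{local weights}))$. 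It then remains to compute the $r$ fixed points of the $A_{r-1}$-resolution $S$ and to verify that their local tangent weights are precisely the substituted arguments $(t_1^{r-k}t_2^{-k},\,t_2^{k+1}t_1^{-r+k+1},\,t_3,\,t_4)$ occurring in the definition of $\mathcal{F}_r$; summing the local contributions over $k=0,\dots,r-1$ yields $F_0=\mathcal{F}_r$.

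Next I would establish the \emph{colour matching}. Inserting the closed formula of Conjecture \ref{conj: PT of G non-abelian} gives
\[
F_{\PT}(Q)+F_{\PT}(Q^{-1}) = \sum_{\beta\in H_2(X,\BZ)} \frac{-P_{1,\beta}(X,\oO_X,t_4)\,[y]\,(Q^{\beta}+Q^{-\beta})}{[t_4]\,[y^{\frac{1}{2}}q]\,[y^{\frac{1}{2}}q^{-1}]}.
\]
By Lemma \ref{lemma: fixed locus stable pairs} the summand vanishes unless $\beta$ corresponds to a positive root, and by Proposition \ref{prop: positive root SU(2)} one has $P_{1,\beta}(X,\oO_X,t_4)=\pm[t_3t_4]/[t_3]$ on such classes. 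The positive roots of $A_{r-1}$ are exactly $\beta_{[i,j]}=\beta_i+\cdots+\beta_j$ for $0<i\leqslant j<r$, and under $Q^{\beta_i}=q_i$ we obtain $Q^{\beta_{[i,j]}}=q_{[i,j]}$; using the relation $t_1t_2t_3t_4=1$, so that $[t_1t_2]=-[t_3t_4]$, together with $q=q_{(r)}$, each term matches the corresponding summand of $\mathcal{F}^{\mathrm{col}}_r$, the substitution $Q\mapsto Q^{-1}$ producing the $q_{[i,j]}^{-1}$ contributions. This gives the colour matching and completes the $\BZ_r$ case; the $\BZ_2\times\BZ_2$ case follows identically, using the analogous explicit values of $P_{1,\beta}$ computed for $\BZ_2\times\BZ_2$ and the four-vertex structure of the crepant resolution $Y\times\BC$.

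The main obstacle I expect is the bookkeeping of \emph{signs and orientations}: the $\pm$ ambiguities of the stable-pair invariants, the vertex signs $(-1)^{\sigma^G_\pi}$ of the orbifold formalism, and the sign in Nekrasov's $\BC^4$ formula must all be chosen compatibly, which is exactly what the ``there exist orientations'' clauses encode; showing that a single coherent choice realizes all three matchings at once is the delicate point. A secondary technical hurdle is the explicit toric computation of the tangent weights at the $r$ (resp.\ four) fixed points of the crepant resolution, needed to identify them with the substituted arguments of $\mathcal{F}_r$ (resp.\ $\mathcal{F}_{2,2}$).
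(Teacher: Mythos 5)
Your proposal is correct and takes essentially the same route as the paper's (much terser) proof: an explicit comparison of the plethystic free energies under the stated change of variables, with the degree-zero factor $\mathcal{Z}^{\DT}_{X,\oO_X}(y,0,q)$ reduced by torus localization to the toric charts, where each chart contributes the $r=1$ formula of Proposition \ref{prop: r=1} with the local tangent weights. Your identification of $F_{\PT}(Q)+F_{\PT}(Q^{-1})$ with $\mathcal{F}^{\mathrm{col}}_r$ via the positive roots and $[t_1t_2]=-[t_3t_4]$, and your flagging of the orientation/sign compatibility as the delicate point, match the content implicit in the paper's argument.
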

\begin{proof}
The proof consists of an explicit comparison of both sides under the given change of variables. This uses the fact that $\mathcal{Z}^{\DT}_{X,\mathcal{O}_X}(y,0,q)$ is determined by the localization formula, which reduces it to the toric charts, where it is given by Proposition \ref{prop: r=1} for $r=1$.
\end{proof}
\begin{rmk}\label{rmk: extending CRC global}
Conjecture \ref{conj: crepant resolution for CY4} is formulated for global quotient stacks $[\BC^4/\BZ_r], [\BC^4/\Z_2\times \BZ_2]$. One can generalize this conjecture to include more general Calabi-Yau 4-orbifolds $\xX$. The key ingredients for its formulation are:
\begin{enumerate}
    \item The existence of a crepant resolution $\widetilde{X}\to X$ of the coarse moduli space $\xX\to X$, such that it contracts at most proper curves.
    \item A suitable identification of variables of the generating series induced by a natural Fourier-Mukai transform $D^{\mathrm{b}}(\widetilde{X})\to D^{\mathrm{b}}(\xX)$ (cf.~\cite{IN1, BKR, CT, BCR}).
     \item In the non-projective case, a torus action (preserving the Calabi-Yau volume form) with proper fixed locus.
\end{enumerate}
We plan to come back to a precise generalization of Conjecture \ref{conj: crepant resolution for CY4} in a future work.
\end{rmk}

\section{Specializations of the invariants}\label{sec: Limit of theory}

\subsection{Dimensional reduction}\label{sec: dim red}

Let $X=Y\times \BC$, where $Y$ is  a Calabi-Yau 3-fold. Then $M=P_n(Y, \beta)$, $\Hilb^n(Y,\beta)$ carry symmetric perfect obstruction theories in the sense of \cite{BF, LT, Beh}. Therefore, we have a twisted virtual structure sheaf
\begin{align*}
    \widehat{\oO}^{\vir}=\oO^{\vir}\otimes K^{1/2}_{\vir}\in K_0\left(M, \BZ\left[\tfrac{1}{2}\right] \right),
\end{align*}
where $K_\vir=\det(T_M^{\vir})^{-1}$ is the virtual canonical bundle and $K_\vir^{1/2}$ is a square root \cite{NO}. Similarly, let $\xX=\yY\times \BC$, where $\yY$ is a Calabi-Yau 3-orbifold. Then $\Hilb^\alpha(\yY)$ has a symmetric perfect obstruction theory and twisted virtual structure sheaf $ \widehat{\oO}^{\vir}$ for any $\alpha\in F_0K(\yY)$.

We define the generating series of \emph{Nekrasov-Okounkov $K$-theoretic invariants} \cite{NO}:
\begin{align*}
\mathcal{Z}^{\PT}_Y(Q,q)&:=\sum_{\beta,n}\chi\left( P_{n}(Y, \beta),\widehat{\oO}^{\vir}\right)Q^{\beta}q^n, \\
\mathcal{Z}^{\DT}_Y(Q,q)&:=\sum_{\beta,n}\chi\left( \Hilb^n(Y,\beta),\widehat{\oO}^{\vir}\right)Q^{\beta}q^n, \\
\mathcal{Z}^{\DT}_{\yY}(q)&:=\sum_{\alpha\in F_0K(\yY)}\chi\left( \Hilb^\alpha(\yY),\widehat{\oO}^{\vir}\right)q^\alpha, 
\end{align*}
where $Q$, $q$ are a multi-index variables. These invariants are defined as long as the moduli space is proper, or endowed with a torus action with proper fixed locus. In the latter case, the invariants are defined equivariantly by virtual localization \cite{GP,FG} as in Definition \ref{Nekgen}. 

In \cite[App.~B]{CKM}, we conjectured a formula for the Nekrasov genera of the local resolved conifold $\oO_{\mathbb{P}^1}(-1,-1,0)$. Setting $L=\oO_X, y=t_4$, the formula in loc.~cit.~reduces to the generating series of Nekrasov-Okounkov invariants of the resolved conifold $\oO_{\mathbb{P}^1}(-1,-1)$. This was proved by studying the behaviour of the 4-fold vertex and edge terms under the specialization $L=\oO_X,  y=t_4$ \cite[\S 2.1]{CKM}. Similar techniques yield the following \emph{dimensional reduction}.\footnote{Let $X = Y \times \mathbb{C}$, where $Y$ is a toric Calabi-Yau 3-fold and consider a $(\mathbb{C}^*)^4$-fixed stable pair $(F,s)$ on $X$. The contribution of $(F,s)$ to the invariant is calculated using vertex/edge terms \cite{CKM}. In general, in a toric chart $U \cong \mathbb{C}^4$ , the specialization $y = t_4$ creates a pole in the vertex term corresponding to $U$ (unless $U$ contains only one leg like in the local resolved conifold case). However, if the underlying reduced curve of $(F,s)$ is a linear chain of $\mathbb{P}^1$'s (so every toric chart contains at most two legs), then the \emph{combined} vertex and edge terms of all charts contain no poles and the specialization $y=t_4$ is well-defined. If additionally $(F,s)$ is \emph{not} scheme theoretically supported on $Y \times \{0\} \subset Y \times \mathbb{C}$, then it contributes zero under the specialization $y=t_4$.}

\begin{cor} \label{cor:dimred1}
Assume Conjecture \ref{conj: PT of G non-abelian} holds for $G=\BZ_{r}<\mathrm{SU}(2) < \mathrm{SU}(3)$ and let $Y\to \BC^3/\BZ_{r}$ be the crepant resolution given by the Nakamura $G$-Hilbert scheme. Assume\footnote{See also \cite[Rem.~1.18]{CKM}.} additionally that the signs of the fixed points $(F,s)$ scheme theoretically supported on $Y \times \{0\} \subset Y \times \BC$ is $(-1)^{\chi(F)}$. Then
\[\mathcal{Z}^{\PT}_{Y}(Q,-q)= \mathrm{Exp}\Bigg(\sum_{0 < i\leq j < r}\frac{[t_1t_2]}{[t_3]}\cdot\frac{ Q_i\cdots Q_j}{[\kappa^{\frac{1}{2}} q] [\kappa^{\frac{1}{2}} q^{-1}]} \Bigg),\]
where $\kappa=t_1t_2t_3$ is the Calabi-Yau weight.
\end{cor}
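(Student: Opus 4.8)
The plan is to derive the three-fold formula from the assumed four-fold closed form by specializing $y=t_4$ and invoking dimensional reduction. Throughout, $G=\BZ_r$, so $Y=S\times\BC$ and $X=Y\times\BC=S\times\BC^2$, with $S\to\BC^2/\BZ_r$ the minimal resolution of an $A_{r-1}$ singularity. The curve classes that contribute are exactly those corresponding to positive roots of $A_{r-1}$, i.e. $\beta=\beta_i+\cdots+\beta_j$ for $0<i\leq j<r$, for which $Q^\beta=Q_i\cdots Q_j$; this accounts for the index set of the sum.

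First I would set up the dimensional reduction identity $\mathcal{Z}^{\PT}_Y(Q,-q)=\mathcal{Z}^{\PT}_{X,\oO_X}(t_4,Q,q)$. By the vertex/edge analysis of \cite[\S2.1]{CKM} recalled in the footnote, after the specialization $L=\oO_X$, $y=t_4$ the only $\TT$-fixed stable pairs contributing a nonzero, pole-free term are those scheme-theoretically supported on $Y\times\{0\}$, and these biject with the $\TT$-fixed stable pairs on the three-fold $Y$. Well-definedness of the specialization holds here because the exceptional locus of $S\to\BC^2/\BZ_r$ is a chain, so every relevant reduced curve is a linear chain of $\mathbb{P}^1$'s and each toric chart carries at most two legs. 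At such a fixed point the four-fold integrand $[-T^{\half}]\cdot\widehat{\Lambda}^\bullet(\oO_X^{[n]}\otimes y^{-1})|_{y=t_4}$ becomes the three-fold Nekrasov-Okounkov integrand; granting the sign hypothesis, this yields $P_{n,\beta}(X,\oO_X,t_4)=(-1)^{n}\chi(P_n(Y,\beta),\widehat{\oO}^{\vir})$, and summing over $n$ turns the sign into the substitution $q\mapsto-q$.

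Next I would feed in Conjecture \ref{conj: PT of G non-abelian}. Specializing $y=t_4$ cancels the factor $[y]/[t_4]=1$, and the Calabi-Yau relation $t_1t_2t_3t_4=1$, i.e. $t_4=\kappa^{-1}$, gives the bracket identity $[t_4^{1/2}q]\,[t_4^{1/2}q^{-1}]=[\kappa^{1/2}q]\,[\kappa^{1/2}q^{-1}]$ (each factor changes sign, and the two signs cancel in the product). It remains to insert $P_{1,\beta}(X,\oO_X,t_4)$. By Lemma \ref{lemma: fixed locus stable pairs} this vanishes off positive roots, and by Proposition \ref{prop: positive root SU(2)} it equals $\pm[t_3t_4]/[t_3]$ on positive roots; the same relation $t_1t_2t_3t_4=1$ gives $[t_3t_4]=-[t_1t_2]$, so for the correct orientation the numerator $-P_{1,\beta}(X,\oO_X,t_4)$ becomes $[t_1t_2]/[t_3]$. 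Summing over $0<i\leq j<r$ then produces exactly the claimed plethystic exponential.

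The hard part will be the sign bookkeeping in the dimensional reduction. The four-fold localization sign, the choice of square root $K^{1/2}_{\vir}$ on the three-fold side, and the intrinsic comparison of the bare integrands must be controlled simultaneously: a direct evaluation at the $n=1$ fixed point gives $[-T^{\half}]\,[t_4]=-\,\chi(P_1(Y,\beta),\widehat{\oO}^{\vir})$, so the bare terms already differ by a sign, which is exactly why the hypothesis fixes the fixed-point signs to be $(-1)^{\chi(F)}$ and why the substitution is $q\mapsto -q$ rather than $q\mapsto q$. As consistency checks I would match the output against the independent $n=0,1$ evaluation of Proposition \ref{cor:n=0,1}, and against a direct three-fold localization: extracting $\Ext^1_Y(\oO_C,\oO_C)=\BC\,t_3^{-1}$ and $\Ext^2_Y(\oO_C,\oO_C)=\BC\,(t_1t_2)^{-1}$ from Lemma \ref{lemma: Ext of fourfolds and 3-folds} via the adjunction for $X=Y\times\BC$ yields $\chi(P_1(Y,\beta),\widehat{\oO}^{\vir})=[t_1t_2]/[t_3]$, which matches the $q^1$-coefficient of the stated formula.
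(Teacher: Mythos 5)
Your proposal is correct and follows essentially the same route as the paper, which justifies this corollary only through the discussion and footnote preceding it: specialize $y=t_4$ in Conjecture \ref{conj: PT of G non-abelian}, use the vertex/edge pole analysis of \cite[\S 2.1]{CKM} (valid here because the exceptional curves form a linear chain), identify $[t_4^{1/2}q][t_4^{1/2}q^{-1}]=[\kappa^{1/2}q][\kappa^{1/2}q^{-1}]$ and $-P_{1,\beta}(X,\oO_X,t_4)=[t_1t_2]/[t_3]$ on positive roots, and absorb the sign hypothesis into the substitution $q\mapsto -q$. Your $n=1$ consistency check against Lemma \ref{lemma: Ext of fourfolds and 3-folds} is also sound.
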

We expect that this formula can be proved using the techniques developed by Kononov-Okounkov-Osinenko \cite{KOO} for the resolved conifold case.

Let $G < \mathrm{SU}(4)$ be a finite abelian subgroup with elements of age no greater than 1. As noted before, by Proposition \ref{classify g=1 abelian} and Remark \ref{rmk: noZrinSO(3)}, the only cases to consider are $\mathbb{Z}_r < \mathrm{SU}(2) < \mathrm{SU}(4)$ and $\mathbb{Z}_2 \times \mathbb{Z}_2 < \mathrm{SO}(3) < \mathrm{SU}(4)$.
    Let $Y\to \BC^3/G$ be the Nakamura $G$-Hilbert scheme. Then, specializing $y=t_4$, we expect a similar dimensional reduction principle to hold for  $X=Y\times \BC$.
    In this more general setting, one cannot exploit the vertex formalism, but one may be able to apply a version of the Lefschetz principle developed in \cite[Cor.~B.4]{Park}.

On the orbifold side, the dimensional reduction for the ordinary vertex term \cite[Prop.~2.1]{CKM} immediately adapts to orbifold vertex (and in fact works for any Calabi-Yau 4-orbifold $[\BC^4/G]\cong [\BC^3/G]\times \BC $, where $G$ is \emph{any} finite abelian subgroup of $\mathrm{SU}(3)$). Setting $y=t_4$ in Conjectures \ref{conj: closed formula orbifold vertex}, \ref{conj: closed formula orbifold vertex D4}, we recover the conjectural expressions of the orbifold 3-fold vertex of Cirafici \cite[Eqn.~(5.37),~(5.56)]{Cir} in string theory.
\begin{cor} \label{cor:dimred2}
Assume Conjecture \ref{conj: closed formula orbifold vertex} holds.  Assume additionally that the signs of the fixed points $W \subset \BC^4$ scheme theoretically supported on $\BC^3 \times \{0\} \subset \BC^4$ is $(-1)^{\dim H^0(W,\mathcal{O}_W)^G}$. 
Then 
\begin{align*}
\mathcal{Z}^{\DT}_{[\mathbb{C}^3/\BZ_{r}]}(-q_0,q_1,\ldots,q_{r-1}) = \Exp\left(\mathcal{F}^{\mathrm{3D}}_r(t, q)+\mathcal{F}^{\mathrm{col},\mathrm{3D}}_{r}(t,q_0,\dots, q_{r-1})\right),
\end{align*}
where 
\begin{align*}
\mathcal{F}^{\mathrm{3D}}(t, q)&:=\frac{[t_1t_2][t_2t_3][t_1t_3]}{[t_1][t_2][t_3]}\cdot\frac{1}{[\kappa^{\frac{1}{2}}q][\kappa^{\frac{1}{2}}q^{-1}]},\\
    \mathcal{F}^{\mathrm{3D}}_r(t_1,t_2,t_3, q)&:=\sum_{k=0}^{r-1}\mathcal{F}^{\mathrm{3D}}\left(t_{1}^{r-k}t_2^{-k}, t_2^{k+1}t_1^{-r+k+1}, t_3,  q_{(r)}\right),\\
    \mathcal{F}^{\mathrm{col}, \mathrm{3D}}_{r}(t,q_0,\dots, q_{r-1})&:=\sum_{0<i\leqslant  j<r}\left(q_{[i,j]}+q_{[i,j]}^{-1}\right)\frac{[t_1 t_2]}{[t_3][\kappa^{\frac{1}{2}}q_{(r)}][\kappa^{\frac{1}{2}}q_{(r)}^{-1}]},
\end{align*}
with $\kappa:=t_1t_2t_3$. 

Assume Conjecture \ref{conj: closed formula orbifold vertex D4} holds. Then 
\begin{align*}
\mathcal{Z}^{\DT}_{[\mathbb{C}^3/\BZ_{2}\times \BZ_2]}(-q_{00},q_{10}, q_{01}, q_{11}) = \Exp\left(\mathcal{F}^{\mathrm{3D}}_{2,2}(t, q_{(2,2)})+\mathcal{F}^{\mathrm{col,\mathrm{3D}}}_{2,2}(t,q_{00},q_{10},q_{01}, q_{11}) \right),
\end{align*}
where 
\begin{multline*}
     \mathcal{F}^{\mathrm{3D}}_{2,2}(t_1,t_2,t_3, q):=\mathcal{F}^{\mathrm{3D}}\left(t_1^2, t_2^{2}, \frac{t_3}{t_1t_2}, q\right)+\mathcal{F}^{\mathrm{3D}}\left(t_1^2, \frac{t_2}{t_1t_3}, t_3^{2}, q\right)\\
    +\mathcal{F}^{\mathrm{3D}}\left(\frac{t_1}{ t_2t_3}, t_2^{2}, t_3^{2}, q\right)+\mathcal{F}^{\mathrm{3D}}\left(\frac{t_2t_3}{t_1}, \frac{t_1t_3}{t_2},\frac{t_1t_2}{t_3}, q\right),
\end{multline*}
\begin{multline*}
     \mathcal{F}^{\mathrm{col,\mathrm{3D}}}_{2,2}(t,q_{00},q_{10},q_{01}, q_{11}):=\frac{1}{[\kappa^{\frac{1}{2}}q_{(2,2)}][\kappa^{\frac{1}{2}}q_{(2,2)}^{-1}]}\Bigg(\frac{[t_1t_2t_3^{-1}]}{[t_3^{2}]}(q_{10}q_{01}+q_{10}^{-1}q_{01}^{-1})\\
     +\frac{[t_1t_2^{-1}t_3]}{[t_2^{2}]}\left(q_{10}q_{11}+q_{10}^{-1}q_{11}^{-1}\right)+\frac{[t_1^{-1}t_2t_3]}{[t_1^{2}]}\left(q_{01}q_{11}+q_{01}^{-1}q_{11}^{-1}\right)\\
     +q_{10}+q_{01}+q_{11}+q_{10}q_{01}q_{11}+q_{10}^{-1}+q_{01}^{-1}+q_{11}^{-1}+q_{10}^{-1}q_{01}^{-1}q_{11}^{-1}\Bigg) ,
\end{multline*}
with $\kappa:=t_1t_2t_3$.
\end{cor}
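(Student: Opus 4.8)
The plan is to deduce the three-dimensional formula by specializing the (assumed) four-dimensional closed formula to $y=t_4$, using a dimensional reduction that identifies this specialization with the three-dimensional orbifold partition function. The argument splits into a geometric input (the orbifold dimensional reduction) and a purely algebraic computation (specializing the closed formula). I treat $G=\BZ_r<\mathrm{SU}(2)<\mathrm{SU}(3)$; the case $\BZ_2\times\BZ_2<\mathrm{SO}(3)<\mathrm{SU}(3)$ is identical.

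First I would establish the dimensional reduction
\[
\mathcal{Z}^{\DT}_{[\BC^3/\BZ_r]}(-q_0,q_1,\dots,q_{r-1}) = \mathcal{Z}^{\DT}_{[\BC^4/\BZ_r],\oO}(y,q_0,\dots,q_{r-1})\big|_{y=t_4}.
\]
Since $G<\mathrm{SU}(3)$ acts trivially on the fourth coordinate, we have $[\BC^4/G]\cong[\BC^3/G]\times\BC$, the colouring $K(i,j,k,l)$ is independent of $l$, and the substitution $y=t_4$ commutes with taking $G$-fixed parts. The dimensional reduction of the ordinary vertex \cite[Prop.~2.1]{CKM} then applies $G$-equivariantly: the $G$-fixed vertex $\tilde{\mathsf{v}}^G_\pi$, specialized at $y=t_4$, produces a trivial $\TT$-weight for every solid partition $\pi$ possessing a box outside $\BC^3\times\{0\}$, forcing $[-\tilde{\mathsf{v}}^G_\pi]\big|_{y=t_4}=0$; for $\pi$ flat in the fourth direction it reduces to the three-dimensional orbifold vertex $[-\tilde{\mathsf{v}}^{G,\mathrm{3D}}_{\pi^{\mathrm{3D}}}]$ of the underlying plane partition $\pi^{\mathrm{3D}}$. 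For such a flat partition the set $\{(a,a,a,d)\in\pi:a<d\}$ is empty, so the orientation sign reduces to $\sigma^G_\pi=|\pi|_{R_0}=\dim H^0(W,\oO_W)^G$, matching the additional sign hypothesis; and since the colour of each box is $l$-independent we have $|\pi|_{R_i}=|\pi^{\mathrm{3D}}|_{R_i}$ for all $i$. Summing the surviving contributions, the factor $(-1)^{|\pi|_{R_0}}q_0^{|\pi|_{R_0}}$ becomes $(-q_0)^{|\pi^{\mathrm{3D}}|_{R_0}}$, which is exactly the $q_0\mapsto-q_0$ substitution, while $q_1,\dots,q_{r-1}$ are unchanged, giving the displayed identity.

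Finally I would carry out the algebraic specialization of the closed formula. Granting Conjecture \ref{conj: closed formula orbifold vertex}, it remains to set $y=t_4$ in $\Exp(\mathcal{F}_r+\mathcal{F}^{\mathrm{col}}_r)$. Term by term, $[y]\big|_{y=t_4}=[t_4]$ cancels the factor $[t_4]$ in the denominator of $\mathcal{F}$, while on the Calabi-Yau torus $t_4=(t_1t_2t_3)^{-1}=\kappa^{-1}$ one has $[y^{\frac{1}{2}}q][y^{\frac{1}{2}}q^{-1}]\big|_{y=t_4}=[\kappa^{-\frac{1}{2}}q][\kappa^{-\frac{1}{2}}q^{-1}]=[\kappa^{\frac{1}{2}}q][\kappa^{\frac{1}{2}}q^{-1}]$, using the identity $[\kappa^{-\frac{1}{2}}q]=-[\kappa^{\frac{1}{2}}q^{-1}]$. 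Hence $\mathcal{F}\big|_{y=t_4}=\mathcal{F}^{\mathrm{3D}}$, and likewise $\mathcal{F}_r\big|_{y=t_4}=\mathcal{F}^{\mathrm{3D}}_r$ and $\mathcal{F}^{\mathrm{col}}_r\big|_{y=t_4}=\mathcal{F}^{\mathrm{col},\mathrm{3D}}_r$; combining with the dimensional reduction identity proves the $\BZ_r$ statement, and the $\BZ_2\times\BZ_2$ case follows verbatim from Conjecture \ref{conj: closed formula orbifold vertex D4}.

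The main obstacle is the geometric dimensional reduction for the orbifold vertex: because the measure $[-\,\cdot\,]$ is nonlinear, one must verify that both the vanishing for non-flat partitions and the reduction to the three-dimensional vertex genuinely survive the passage to $G$-fixed parts, rather than only for the ordinary ($G$ trivial) vertex. Concretely, one has to check that the trivial $\TT$-weight responsible for the vanishing is itself $G$-invariant, so that it persists in $\tilde{\mathsf{v}}^G_\pi$; this is precisely where the hypothesis $G<\mathrm{SU}(3)$ (triviality of the $G$-action on $x_4$, hence $l$-independence of the colouring) is essential. By contrast, the sign bookkeeping and the closed-form specialization are routine.
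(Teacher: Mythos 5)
Your proposal is correct and follows exactly the route the paper intends: the paper states this corollary without a formal proof, relying on the preceding remark that the dimensional reduction of the ordinary vertex \cite[Prop.~2.1]{CKM} adapts to the orbifold vertex (vanishing for partitions not supported on $\BC^3\times\{0\}$, reduction to the 3D coloured vertex otherwise, with the sign hypothesis absorbing $(-1)^{|\pi|_{R_0}}$ into $q_0\mapsto -q_0$), followed by the routine specialization $y=t_4$ of the closed formulae. Your observation that a trivial $\TT$-weight is automatically $G$-invariant (since $\det$ of the $G$-action is trivial), so the vanishing mechanism survives passage to $G$-fixed parts, is precisely the point that makes the adaptation "immediate" in the paper's sense.
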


Combining Corollaries \ref{cor:dimred1} and \ref{cor:dimred2}, we obtain a \emph{crepant resolution correspondence} for Nekrasov-Okounkov $K$-theoretic invariants for 
$G=\BZ_r<\mathrm{SU}(2)$, which ---to our knowledge--- is new. 
\begin{cor}
Assume Conjecture \ref{conj: crepant resolution for CY4} holds for $G=\BZ_{r}<\mathrm{SU}(2) < \mathrm{SU}(3)$. Denote by $Y\to \BC^3/\BZ_{r}$ the crepant resolution given by the Nakamura $G$-Hilbert scheme and let $\mathcal{Y} = [\BC^3 / \BZ_r]$. Assume,  additionally, that the signs of the fixed points $(F,s)$ scheme-theoretically supported on $Y \times \{0\} \subset Y \times \BC$ is $(-1)^{\chi(F)}$ and the signs of the fixed points $W \subset \BC^4$ scheme theoretically supported on $\BC^3 \times \{0\} \subset \BC^4$ is $(-1)^{\dim H^0(W,\mathcal{O}_W)^G}$.
Then 
\begin{align*}
    \mathcal{Z}^{\DT}_{\mathcal{Y}}(q_0,\ldots,q_{r-1})=\mathcal{Z}^{\DT}_{Y}(0,q)\cdot \mathcal{Z}^{\PT}_{Y}(Q,q)\cdot \mathcal{Z}^{\PT}_{Y}(Q^{-1},q),
\end{align*}
under the change of variables $Q^{\beta_i}=q_i$ for $i=1,\dots, |G|-1$ and $q=q_0\cdots q_{r-1}$. Here $\beta_i$ is the curve class corresponding to the irreducible $\BZ_r$-representation $R_i$ under the McKay correspondence \eqref{equ on mckay}.
\end{cor}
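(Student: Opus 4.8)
The plan is to descend the four-dimensional crepant resolution correspondence (Conjecture~\ref{conj: crepant resolution for CY4}, which is assumed to hold for $\BZ_r$) to three dimensions by applying the dimensional reduction $y=t_4$ of \S\ref{sec: dim red} to every factor, and then reconciling the resulting signs by a single change of variable. Throughout I write $X=Y\times\BC$ and $\xX=[\BC^4/\BZ_r]\cong[\BC^3/\BZ_r]\times\BC=\mathcal{Y}\times\BC$, with $L=\oO$ throughout. The starting point is the identity
\[
\mathcal{Z}^{\DT}_{\xX,\oO_{\xX}}(y,q_0,\ldots,q_{r-1})=\mathcal{Z}^{\DT}_{X,\oO_X}(y,0,q)\cdot \mathcal{Z}^{\PT}_{X,\oO_X}(y,Q,q)\cdot \mathcal{Z}^{\PT}_{X,\oO_X}(y,Q^{-1},q),
\]
into which I would substitute $y=t_4$ on both sides, using that this specialization realizes dimensional reduction factor by factor at the level of the localized invariants.

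For the two stable pair factors, the exceptional locus of $Y\to\BC^3/\BZ_r$ is the linear $A_{r-1}$-chain of $\mathbb{P}^1$'s, so every toric chart of $X$ carries at most two legs and, by the vertex/edge analysis underlying Corollary~\ref{cor:dimred1}, the combined vertex and edge contributions have no pole at $y=t_4$. Fixed points not scheme-theoretically supported on $Y\times\{0\}$ vanish in the limit, while those supported there acquire the sign $(-1)^{\chi(F)}$ by hypothesis, so $\mathcal{Z}^{\PT}_{X,\oO_X}(t_4,Q^{\pm1},q)=\mathcal{Z}^{\PT}_{Y}(Q^{\pm1},-q)$. For the orbifold factor, the principle underlying Corollary~\ref{cor:dimred2}, together with the hypothesis that a surviving fixed point $W$ carries the sign $(-1)^{\dim H^0(W,\oO_W)^G}=(-1)^{|\pi|_{R_0}}$, gives $\mathcal{Z}^{\DT}_{\xX,\oO_{\xX}}(t_4,q_0,\ldots,q_{r-1})=\mathcal{Z}^{\DT}_{\mathcal{Y}}(-q_0,q_1,\ldots,q_{r-1})$; only $q_0$ is twisted, since the trivial representation $R_0$ is tracked by $q_0$ alone. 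Finally, the degree-zero factor localizes to the toric charts of $Y$, each a copy of $\BC^4$, where $y=t_4$ is the $r=1$ reduction; with the established sign rule of \cite{KR} this yields $\mathcal{Z}^{\DT}_{X,\oO_X}(t_4,0,q)=\mathcal{Z}^{\DT}_{Y}(0,-q)$.

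Substituting these four identities into the specialized correspondence produces
\[
\mathcal{Z}^{\DT}_{\mathcal{Y}}(-q_0,q_1,\ldots,q_{r-1})=\mathcal{Z}^{\DT}_{Y}(0,-q)\cdot \mathcal{Z}^{\PT}_{Y}(Q,-q)\cdot \mathcal{Z}^{\PT}_{Y}(Q^{-1},-q),
\]
with $q=q_0q_1\cdots q_{r-1}$ and $Q^{\beta_i}=q_i$. To conclude I would apply the single substitution $q_0\mapsto-q_0$, which fixes $q_1,\ldots,q_{r-1}$, hence $Q$, but sends $q=q_0\cdots q_{r-1}$ to $-q$. On the left it turns $\mathcal{Z}^{\DT}_{\mathcal{Y}}(-q_0,q_1,\ldots,q_{r-1})$ into $\mathcal{Z}^{\DT}_{\mathcal{Y}}(q_0,\ldots,q_{r-1})$, while on the right each argument $-q$ becomes $q$, giving exactly the stated correspondence.

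I expect the main obstacle to be the coherence of the two sign twists. A priori the orbifold reduction twists only by the trivial-representation variable, $(-1)^{|\pi|_{R_0}}$, whereas each reduced three-fold factor twists by the full Euler characteristic $(-1)^{\chi}$ tracked by $q=q_0\cdots q_{r-1}$; what makes the descent clean is that both are undone by the same substitution $q_0\mapsto-q_0$, precisely because $q=q_0\cdots q_{r-1}$ couples the trivial-representation variable to the Euler-characteristic variable. Checking that the orientations furnished by Conjecture~\ref{conj: crepant resolution for CY4} are compatible with the hypothesized reduction signs is the delicate part. A secondary point is the well-definedness of the $y=t_4$ limit for the stable pair factors, which rests on the $A_{r-1}$ configuration being a linear chain, so that no vertex carries three legs.
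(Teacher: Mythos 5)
Your proposal is correct and follows exactly the route the paper intends: the corollary is stated as the result of "combining Corollaries \ref{cor:dimred1} and \ref{cor:dimred2}", i.e.\ specializing $y=t_4$ in the four-fold crepant resolution identity, picking up the sign twists $(-1)^{\chi(F)}$ on the three-fold factors and $(-1)^{\dim H^0(W,\oO_W)^G}$ on the orbifold factor, and undoing both simultaneously via $q_0\mapsto -q_0$ thanks to $q=q_0\cdots q_{r-1}$. Your discussion of why the $y=t_4$ limit is well defined for the stable pair factors (the $A_{r-1}$ chain being linear, so no vertex carries three legs) and of the coherence of the two sign conventions is precisely the content the paper leaves implicit.
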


\subsection{Cohomological limit}\label{sec: cohom limit}

Let $X$  be a  Calabi-Yau 4-fold, endowed with a trivial $\BC^*$-action with irreducible character $y$ and $m:=c_1^{\C^*}(y)$.
Suppose $X$ is projective, or more generally $X$ is quasi-projective with action by an algebraic torus $\TT$ preserving the Calabi-Yau volume form, and such that the $\TT$-fixed loci of the $\DT$ moduli spaces are proper (as in \S \ref{sec: DT4 virtual struc}). 
Let $L$ be a $\TT$-equivariant line bundle on $X$.
We define $\DT$ invariants in $\TT$-equivariant cohomology by\footnote{Here the Euler class is defined as follows. In the $K$-group of locally free sheaves, write $(L^{[n]})^\vee = E_0 - E_1$ for locally free sheaves $E_0, E_1$. Then $e((L^{[n]})^\vee\otimes y) = e(E_0 \otimes y) / e(E_1 \otimes y)$, where on the right-hand side the Euler classes are equivariant with respect to the trivial $\mathbb{C}^*$-action. When the invariants are defined by virtual localization on the $\TT$-fixed locus, the Euler classes on the right-hand side are $\TT \times \mathbb{C}^*$-equivariant.}
\begin{align}\label{eqn: inv for cohom}
\begin{split} 
I_{n,\beta}^{\mathrm{coh}}(X,L,m)&:=\int_{[\Hilb^n(X, \beta)]^{\vir}}e((L^{[n]})^\vee\otimes y)\in H^*_{\TT\times \BC^*}(\pt)_{\mathrm{loc}},\\
\mathcal{Z}^{\DT, \mathrm{coh}}_{X,L}(m,Q,q)&:=\sum_{\beta,n} I^{\mathrm{coh}}_{n,\beta}(X,L, m)\,Q^{\beta}q^n\in  H^*_{\TT\times \BC^*}(\pt)_{\mathrm{loc}}(\!( q,Q)\!),
\end{split}
\end{align}
and similarly for PT invariants $P_{n,\beta}^{\mathrm{coh}}(X,L,m)$ and its generating series $ \mathcal{Z}^{\PT, \mathrm{coh}}_{X,L}(m,Q,q)$.
These invariants were studied in \cite{CK1, CT3}, \cite[\S 0.4]{CKM}, \cite[\S 2.3]{CT4}. 

For example, if $X$ is a toric Calabi-Yau 4-fold acted on by the subtorus $\TT < (\C^*)^4$ preserving the Calabi-Yau volume form, we have
\[H^*_{\TT\times \BC^*}(\pt)_{\mathrm{loc}}=\frac{\BQ(\lambda_1, \lambda_2, \lambda_3, \lambda_4,m)}{(\lambda_1+\lambda_2+\lambda_3+\lambda_4)},\]
where we set $\lambda_i=c_1^{\TT}(t_i)$ for $i=1,\dots, 4$ and the invariants are defined by the (cohomological) virtual localization formula on the $\TT$-fixed locus of Oh-Thomas \cite{OT}. Similarly we define orbifold DT invariants $I^{\mathrm{coh}}_{\alpha}(\xX,L,m)$ for a Calabi-Yau 4-orbifold $\xX$ and class $\alpha\in F_0K(\xX)$, and its generating series 
$\mathcal{Z}^{\DT, \mathrm{coh}}_{\xX,L}(m,q)$.

In \cite[\S 2.2]{CKM}, we proved that for a toric Calabi-Yau 4-fold $X$ and suitable orientations, a certain limit of the Nekrasov genus computes the invariants in equivariant cohomology as follows
\begin{align*}
    \lim_{b\to 0}\left. I_{n,\beta}(X,L,y)\right|_{t_i=e^{b\lambda_i}, y=e^{bm}}=I_{n,\beta}^{\mathrm{coh}}(X,L,m),
\end{align*}
where $(\cdot)|_{t_i=e^{b\lambda_i},y=e^{bm}}$ means evaluation of $(\cdot)$ in $t_i = e^{b \lambda_i}$ and $y=e^{bm}$, and we first evaluate and then take the limit. See \cite[Prop. 2.2]{Boj2} for a similar result in the compact setting. An analogous statement was derived for PT invariants in \cite{CKM}, where we assume that the $(\BC^*)^4$-fixed loci of the PT moduli spaces (for fixed $X,\beta$ and all $n$) is 0-dimensional. This assumption ensures that the $(\BC^*)^4$-fixed loci coincide with the $\TT$-fixed loci and are 0-dimensional reduced \cite[Prop.~2.6]{CK2}. 
The following theorem simultaneously generalizes the toric and projective cases.
\begin{thm}\label{thm: cohom limit}
Let $X$ be a Calabi-Yau 4-fold. Let $\TT$ be a (possibly trivial) algebraic torus acting on $X$ preserving the Calabi-Yau volume form. Let $L$ be a $\TT$-equivariant line bundle on $X$. If $\Hilb^n(X, \beta)^\TT$ is proper, then we have 
    \begin{align*}
    \lim_{b\to 0}\left. I_{n,\beta}(X,L,y)\right|_{t_i=e^{b\lambda_i}, y=e^{bm}}&=I_{n,\beta}^{\mathrm{coh}}(X,L,m).
\end{align*}
If $P_n(X, \beta)^\TT$ is proper, then we have 
\begin{align*}
     \lim_{b\to 0}\left. P_{n,\beta}(X,L,y)\right|_{t_i=e^{b\lambda_i}, y=e^{bm}}&=P_{n,\beta}^{\mathrm{coh}}(X,L,m).
\end{align*}
\end{thm}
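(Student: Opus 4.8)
The plan is to reduce both invariants to the common proper fixed locus and then to recognise the remaining statement as the cohomological limit for a \emph{proper} $\mathrm{DT}_4$ moduli space, applied $\TT$-equivariantly. Write $M=\Hilb^n(X,\beta)$, the PT case being identical with $M=P_n(X,\beta)$ and $\oO_{\mathcal Z}$ replaced by $\mathbb F$. By hypothesis $M^\TT$ is proper, and by \cite{OT} it carries an induced self-dual obstruction theory, hence a virtual class $[M^\TT]^{\vir}$ and twisted virtual structure sheaf $\widehat{\oO}^{\vir}_{M^\TT}$, compatibly oriented with those on $M$ once an orientation of $N^{\vir}$ is fixed. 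First I would use Theorem~\ref{thm: OT virtual localization} to write
\[
I_{n,\beta}(X,L,y)=\chi\!\left(M^\TT,\ \frac{\widehat{\oO}^{\vir}_{M^\TT}}{\sqrt{\mathfrak e^{\TT}}(N^{\vir})}\otimes\widehat{\Lambda}^{\mdot}\!\big(L^{[n]}|_{M^\TT}\otimes y^{-1}\big)\right),
\]
and the cohomological Oh--Thomas localization formula of \cite{OT} to write the right-hand side of the asserted identity as
\[
I^{\mathrm{coh}}_{n,\beta}(X,L,m)=\int_{[M^\TT]^{\vir}}\frac{e\big((L^{[n]})^{\vee}\otimes y\big)\big|_{M^\TT}}{\sqrt{e^{\TT}}(N^{\vir})}.
\]
Both expressions now live entirely on the single proper scheme $M^\TT$, so the theorem no longer refers to the possibly non-proper ambient $M$, and it suffices to compare these two over $M^\TT$.

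Next I would set up the bridge via virtual Hirzebruch--Riemann--Roch on the proper $M^\TT$, exactly as in the proof of Proposition~\ref{cor:n=0,1}: the twisted structure sheaf $\widehat{\oO}^{\vir}_{M^\TT}$ turns the $K$-theoretic Euler characteristic into an integral against $[M^\TT]^{\vir}$ whose integrand is the Chern character of the inserted $K$-class times $\sqrt{\td}(T^{\vir}_{M^\TT})$. The computational engine is the Grothendieck--Riemann--Roch identity $\ch(\widehat{\Lambda}^{\mdot}V)=e(V^{\vee})/\sqrt{\td}(V)$ together with the analogous expression for $\ch\big(\sqrt{\mathfrak e^{\TT}}(N^{\vir})\big)$ in terms of $\sqrt{e^{\TT}}(N^{\vir})$ and $\sqrt{\td}(N^{\vir})$. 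Conceptually, both $\widehat{\Lambda}^{\mdot}(L^{[n]}|_{M^\TT}\otimes y^{-1})$ and the localisation factor $\tfrac{1}{\sqrt{\mathfrak e^{\TT}}(N^{\vir})}$ are $K$-theoretic (square-root) Euler classes, and the point is that each degenerates to its cohomological counterpart, the surplus $\sqrt{\td}$ factors cancelling against $\sqrt{\td}(T^{\vir}_{M^\TT})$ up to a correction of the form $1+(\text{terms of positive degree})$. This is precisely the shape of the comparison established for proper $\mathrm{DT}_4$ moduli in \cite[Prop.~2.2]{Boj2} and, in the $0$-dimensional fixed-locus (toric) case, in \cite[\S 2.2]{CKM}; the present statement interpolates between them, with the proper $M^\TT$ playing the role of the proper moduli space and the $\TT$-weights of $N^{\vir}$ and of $L^{[n]}$ surviving inside the insertions.

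Finally I would carry out the substitution $t_i=e^{b\lambda_i}$, $y=e^{bm}$ and analyse $b\to 0$. Since every $\TT$-weight $\mu$ enters only through $e^{b\,\mu\cdot\lambda}$, the substituted invariant is $g(b\lambda,bm)$ for the analytic germ $g(x)=I_{n,\beta}(X,L,y)|_{t_i=e^{x_i},\,y=e^{x_{\mathrm{eq}}}}$; taking $b\to 0$ extracts its leading homogeneous term. In the integrand each movable factor $2\sinh\!\big((\gamma+b\,\nu\cdot\lambda)/2\big)$ linearises to $b\,\nu\cdot\lambda$ at leading order, while $\sqrt{\td}(T^{\vir}_{M^\TT})$ is $b$-independent because $T^{\vir}_{M^\TT}$ is the fixed part and carries only trivial weights; the matching of the virtual dimension of $M$ with the rank of the tautological insertion (so that, for $L=\oO_X$, $\dim_{\vir}M=\operatorname{rk}L^{[n]}=n$) guarantees that the net power of $b$ vanishes and that the leading term is exactly $I^{\mathrm{coh}}_{n,\beta}(X,L,m)$, the positive-degree GRR corrections contributing only to strictly higher powers of $b$. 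The main obstacle is precisely this last step on a \emph{positive-dimensional} $M^\TT$: one cannot linearise the $\sinh$-factors pointwise, since the nilpotent geometric Chern roots of $N^{\vir}$ and $T^{\vir}_{M^\TT}$ survive the naive limit, so the argument must control the interaction between these nilpotent parts (which are saturated by $[M^\TT]^{\vir}$) and the $b$-scaled equivariant weights through a careful degree/homogeneity bookkeeping --- essentially verifying that the proper-case argument of \cite[Prop.~2.2]{Boj2} runs $\TT$-equivariantly on $M^\TT$. Along the way one must also check that the orientations induced on $M^\TT$ agree in the two theories, so that the signs match and the comparison is an equality rather than an equality up to sign.
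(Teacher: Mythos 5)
Your strategy coincides with the paper's: localize both sides to the proper fixed locus, convert the $K$-theoretic Euler characteristic into an integral against $[M^{\TT}]^{\vir}$ via the equivariant virtual Riemann--Roch theorem of Oh--Thomas together with $\ch(\Lambda^{\mdot}V^{\vee}) = e(V)/\td(V)$, and then extract the leading term under the rescaling $\lambda_i \mapsto b\lambda_i$, $m \mapsto bm$. You also correctly identify the one genuine difficulty --- that on a positive-dimensional fixed component the nilpotent Chern roots do not scale with $b$, so one cannot linearise the $\sinh$-factors individually. However, at exactly that point the proposal stops: the claim that ``the net power of $b$ vanishes and the positive-degree GRR corrections contribute only to strictly higher powers of $b$'' is asserted by appeal to the virtual-dimension count and to \cite{Boj2}, rather than proved. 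Since this is precisely where all the work in the paper's proof lies, the argument as written has a gap.

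Concretely, the paper closes it as follows. On a connected component $I_\alpha$ of the fixed locus, one groups the tautological insertion together with \emph{half} of the (self-dual, hence weight-paired) virtual normal bundle into a single movable class $W = \bigoplus_\mu L_\mu \otimes t^{\mu} - \bigoplus_\mu N_\mu \otimes t^{\mu}$, so that the localized integrand becomes $e(W)\cdot \td(-W)\cdot \sqrt{\td}(T^{\vir}_{I_\alpha})\cdot \ch(\fF|_{I_\alpha})$ with $\fF = \det(L^{[n]}\otimes y^{-1})^{-1/2}$. The crucial numerical input is $\rk W = \rk L^{[n]} - \tfrac{1}{2}\rk N^{\vir} = n - (n - n_\alpha) = n_\alpha$, the virtual dimension of the \emph{component} $I_\alpha$ (not of $M$ globally, which is the count your proposal invokes). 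Homogeneity of the Euler class of a rank-$n_\alpha$ movable class then forces the Chow-degree-$j$ part of $e(W)$ to carry an equivariant coefficient homogeneous of degree $n_\alpha - j$, which scales as $b^{\,n_\alpha - j}$; only $j = n_\alpha$ survives the limit, and integration against $[I_\alpha]^{\vir}$ sees only that term. Finally, one must check that the correction factor $\td(-W)\cdot\sqrt{\td}(T^{\vir}_{I_\alpha})\cdot\ch(\fF|_{I_\alpha})$ has the form $1$ plus terms whose Chow-degree-zero component has no constant term in $\BQ\llbracket m,\lambda\rrbracket$ (hence is $O(b)$ after rescaling); this is the step that disposes of the nilpotent/Todd interaction you flagged. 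Running the same bookkeeping in reverse identifies the surviving term with $\int_{[I_\alpha]^{\vir}} e\bigl((L^{[n]}|_{I_\alpha})^{\vee}\otimes y\bigr)/\sqrt{e}(N^{\vir})$, and summing over components gives $I^{\mathrm{coh}}_{n,\beta}(X,L,m)$. Your outline is compatible with all of this, but without the class $W$ and the component-wise degree count the limit computation is not actually established.
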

\begin{proof}
Let $I=\Hilb^n(X, \beta)$ and decompose its fixed locus into connected components $I^\TT=\bigsqcup_\alpha I_\alpha$. Define $\fF:=\det(L^{[n]} \otimes y^{-1})^{-1/2}$. By  the (equivariant) virtual Riemann-Roch theorem and virtual localization formula of Oh-Thomas \cite[Thm.~6.1, Thm.~7.1]{OT}, we have 
   \begin{align*}
       \chi\left(I, \widehat{\oO}_I^{\vir}\otimes \widehat{\Lambda}^{\mdot}  (L^{[n]} \otimes y^{-1})\right) 
       &=\int_{[I]^{\vir}} \ch( \Lambda^{\mdot} (L^{[n]} \otimes y^{-1})) \cdot \sqrt{\td}(T_I^{\vir}) \cdot \ch\left(\fF\right) \\
       &=\sum_{\alpha}\int_{[I_\alpha]^{\vir}}\frac{\ch( \Lambda^{\mdot} (L^{[n]}|_{I_\alpha} \otimes y^{-1}))}{\sqrt{e}(N^{\vir})}\cdot \sqrt{\td}(T_I^{\vir}|_{I_\alpha}) \cdot \ch\left(\fF|_{I_\alpha}\right) \\
       &=\sum_{\alpha}\int_{[I_\alpha]^{\vir}}\frac{e((L^{[n]}|_{I_\alpha})^\vee \otimes y)}{\sqrt{e}(N^{\vir})}\cdot\frac{\sqrt{\td}(T_I^{\vir}|_{I_\alpha})}{\td(L^{[n]}|_{I_\alpha}^\vee \otimes y)})\cdot \ch\left(\fF|_{I_\alpha}\right),
   \end{align*}
   where the integration map  is defined equivariantly, $N^{\vir}$ is the virtual normal bundle of $I_\alpha\subset I$, and $T_I^{\vir}$ is the virtual tangent bundle of $I$.
   All Chern characters, Euler classes, and Todd classes in these formulae are equivariant with respect to the algebraic torus $\widetilde{\TT}= \TT\times \BC^*$ with character group  $\widehat{\TT}\oplus \BZ$, where $\BC^*$ acts trivially on $I$ with primitive character $y$. 
   For the third equality, we used the identity $\ch(\Lambda^\mdot V) = e(V^*) / \td(V^*)$ for any $\widetilde{\TT}$-equivariant vector bundle $V$.
   We use multi-index notation $t^\mu \in \widehat{\TT}\oplus \BZ$ where $t^\mu = y^{\mu_0} \prod_i t_i^{\mu_i}$.
   In $K^0_{\widetilde{\TT}}(I_\alpha)$, we have weight decompositions
   \begin{align*}
       (L^{[n]}|_{I_\alpha})^\vee \otimes y&=\bigoplus_{\mu}L_\mu\otimes t^{\mu},\\
       T_I^{\vir}|_{I_\alpha}&=T_{I_\alpha}^{\vir}\oplus N^{\vir},\\
       N^{\vir}&=\bigoplus_{\mu}(N_\mu\otimes t^{\mu})\oplus \bigoplus_{\mu}(N_\mu^\vee\otimes t^{-\mu}),
   \end{align*}
for certain non-zero classes $L_\mu, N_\mu \in K^0(I_\alpha)$.
In the last line we used that $N^{\vir}$ is $\widetilde{\TT}$-moving and self-dual, so all its weight spaces come in pairs (see also \cite[\S 7]{OT}).    
   In particular, the virtual tangent bundle $T_{I_\alpha}^{\mathrm{vir}}$ of $I_\alpha$ is $\widetilde{\TT}$-fixed, and $(L^{[n]}|_{I_\alpha})^\vee \otimes y$ and $N^{\vir}$ are $\widetilde{\TT}$-moving.
   Then
   \begin{align*}
   \sqrt{e}(N^{\vir})&= e\left(\bigoplus_{\mu} N_\mu\otimes t^{\mu}\right),\\
   \sqrt{\td}(T_I^{\vir}|_{I_\alpha})&= \td\left(\bigoplus_{\mu}N_\mu\otimes t^{\mu}\right) \cdot \sqrt{\td}(T_{I_\alpha}^{\vir}).
   \end{align*}
 Define
   \begin{align*}
       W:=\bigoplus_{\mu}L_\mu\otimes t^{\mu}-\bigoplus_{\mu} N_\mu\otimes t^{\mu}
   \end{align*}
  and notice that $W$ is $\widetilde{\TT}$-movable. It follows that
   \begin{align*}    \int_{[I_\alpha]^{\vir}}\frac{e((L^{[n]}|_{I_\alpha})^\vee \otimes y)}{\sqrt{e}(N^{\vir})}\cdot\frac{\sqrt{\td}(T_I^{\vir}|_{I_\alpha})}{\td(L^{[n]}|_{I_\alpha}^\vee \otimes y)}\cdot \ch\left(\fF|_{I_\alpha}\right)=\int_{[I_\alpha]^{\vir}}e(W)\cdot \td(-W)\cdot \sqrt{\td}(T_{I_\alpha}^{\vir})\cdot \ch\left(\fF|_{I_\alpha}\right).
   \end{align*}
   Let $m:=c_1^{\widetilde{\TT}}(y)$ and $\lambda_i = c_1^{\widetilde{\TT}}(t_i)$. 
   Denote by $n_\alpha=\mathrm{vdim} (I_\alpha) = \tfrac{1}{2} \rk(T_{I_\alpha}^{\vir})$ the virtual dimension of $I_\alpha$. Since $\rk L^{[n]}=n$ and $\rk N^{\vir}=2n-2n_\alpha $, it is easy to see that 
   \begin{align*}
      e(W)=\sum_{j\geq 0} \sum_{k=0}^{l_j} \beta_{j,k}\otimes \gamma_{j,k} \in  A^*(I_\alpha)\otimes \BQ(m,\{\lambda_i\}_i),
   \end{align*}
   where $\beta_{j,k}\in A^j(I_\alpha)$ and $\gamma_{j,k} \in \BQ(m,\{\lambda_i\}_i)$ is a \emph{homogeneous} rational function of degree $n_\alpha-j$ for all $k$. 
   Moreover we have
   \begin{align*}
       \td(-W)\cdot \sqrt{\td}(T_{I_\alpha}^{\vir})\cdot \ch\left(\fF|_{I_\alpha}\right)=1+\sum_{j\geq 0}\delta_j\otimes \epsilon_j\in A^*(I_\alpha)\otimes \BQ\llbracket m, \{\lambda_i\}_i\rrbracket,
   \end{align*}
   where $\delta_j \in A^j(I_\alpha)$ and $\epsilon_j \in \BQ\llbracket m, \{\lambda_i\}_i\rrbracket$.
Furthermore the term $\delta_0 \otimes \epsilon_0 \in \BQ\llbracket m, \{\lambda_i\}_i\rrbracket$ has no constant term. 
   Substituting $m \mapsto b m$ and $\lambda_i \mapsto b \lambda_i$, we obtain
   \begin{align*}
       \int_{[I_\alpha]^{\vir}}e(W)\cdot \td(-W)\cdot \sqrt{\td}(T_{I_\alpha}^{\vir})\cdot \ch(\fF|_{I_\alpha}) = \sum_{k=0}^{l_{n_\alpha}} \gamma_{n_\alpha,k}    \cdot\int_{[I_\alpha]^{\vir}}\beta_{n_\alpha,k}+ O(b),
   \end{align*}
   where $O(b)$ is an element of $b \cdot \BQ\llbracket b, m, \{\lambda_i\}_i\rrbracket$, which goes to zero as $b \mapsto 0$. Hence the desired limit yields
   \begin{align*}
      \int_{[I_\alpha]^{\vir}}e(W)=\int_{[I_\alpha]^{\vir}}\frac{e((L^{[n]}|_{I_\alpha})^\vee \otimes y)}{\sqrt{e}(N^{\vir})},
   \end{align*}
which concludes the proof by summing over all the connected components and using the Oh-Thomas localization formula once more. 
The proof for the PT case proceeds analogously.
\end{proof}
Denote by
\begin{align*}
    M(Q,q):=\prod_{n=1}^\infty (1-Qq^n)^{-n}= \Exp\left(\frac{Qq}{(1-q)^2} \right) 
\end{align*}
the refined MacMahon series. By Theorem \ref{thm: cohom limit}, we can express cohomological DT/PT invariants  in terms of $M(Q,q)$ (see also \cite[App.~A]{CKM} and \cite[Thm.~7.2]{FMR}).
\begin{cor}
Assume Conjecture \ref{conj: PT of G non-abelian} holds. Then there exist orientations such that
\[
\mathcal{Z}^{\PT,\mathrm{coh}}_{X, \oO_X}(m,Q,q)= \prod_{\beta\in H_2(X,\mathbb{Z})}M(Q^{\beta},q)^{P^{\mathrm{coh}}_{1,\beta}(X,\oO_X,\lambda_4)\cdot\frac{m}{\lambda_4}}.
\]
\end{cor}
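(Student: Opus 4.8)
The plan is to obtain the formula directly from Conjecture \ref{conj: PT of G non-abelian} by passing to the cohomological limit provided by Theorem \ref{thm: cohom limit}. Since that limit $\lim_{b\to 0}(\,\cdot\,)|_{t_i=e^{b\lambda_i},\,y=e^{bm}}$ is taken coefficient-by-coefficient in the formal variables $Q,q$, and the coefficient of each monomial $Q^\beta q^n$ is a single Nekrasov genus, Theorem \ref{thm: cohom limit} yields
\[
\mathcal{Z}^{\PT,\mathrm{coh}}_{X,\oO_X}(m,Q,q)=\lim_{b\to 0}\left.\mathcal{Z}^{\PT}_{X,\oO_X}(y,Q,q)\right|_{t_i=e^{b\lambda_i},\,y=e^{bm}}.
\]
First I would substitute the conjectural closed form and expand the plethystic exponential via \eqref{equ on ple}, so that
\[
\mathcal{Z}^{\PT}_{X,\oO_X}(y,Q,q)=\exp\Bigg(\sum_{n\geq 1}\frac{1}{n}\sum_{\beta}\frac{-P_{1,\beta}(X,\oO_X,t_4^n)\,[y^n]\,Q^{n\beta}}{[t_4^n]\,[y^{\frac n2}q^n]\,[y^{\frac n2}q^{-n}]}\Bigg),
\]
where the plethystic operation replaces $t_i\mapsto t_i^n$, $y\mapsto y^n$, $q\mapsto q^n$, $Q\mapsto Q^n$.

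The heart of the argument is the termwise computation of the two relevant limits. Expanding the bracket notation gives $[y^{\frac n2}q^n][y^{\frac n2}q^{-n}]=y^{\frac n2}+y^{-\frac n2}-q^n-q^{-n}$, hence
\[
\lim_{b\to 0}\left.[y^{\frac n2}q^n][y^{\frac n2}q^{-n}]\right|_{y=e^{bm}}=2-q^n-q^{-n}=-q^{-n}(1-q^n)^2.
\]
For the remaining factor I would use that $P_{1,\beta}(X,\oO_X,y)=\pm[-\mathsf{V}+y]$ with $\mathsf{V}$ a square root of rank one (Lemmata \ref{lem:compareExts}, \ref{lemma: Ext of fourfolds and 3-folds} and Propositions \ref{prop: positive root SU(2)}, \ref{prop: ico}), so that $-\mathsf{V}+t_4$ has rank zero and $P^{\mathrm{coh}}_{1,\beta}(X,\oO_X,-)$ is a \emph{homogeneous rational function of degree zero}. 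Scale invariance then forces the cohomological limit of the $n$th plethystic power to be independent of $n$:
\[
\lim_{b\to 0}\left.\frac{P_{1,\beta}(X,\oO_X,t_4^n)\,[y^n]}{[t_4^n]}\right|_{t_i=e^{b\lambda_i},\,y=e^{bm}}=P^{\mathrm{coh}}_{1,\beta}(X,\oO_X,\lambda_4)\cdot\frac{m}{\lambda_4},
\]
because each factor $[t^\mu]|_{t_i=e^{nb\lambda_i}}\sim nb\,(\mu\cdot\lambda)$, both $P_{1,\beta}(X,\oO_X,t_4^n)$ and $[y^n]/[t_4^n]$ are ratios of equally many such factors, and the substitution $b'=nb$ eliminates the $n$-dependence.

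Inserting these two limits, the $n$th summand of the free energy becomes $\tfrac1n\,P^{\mathrm{coh}}_{1,\beta}(X,\oO_X,\lambda_4)\,\tfrac{m}{\lambda_4}\,\tfrac{Q^{n\beta}q^n}{(1-q^n)^2}$, the two signs cancelling. Summing over $n$ and $\beta$ gives
\[
\log\mathcal{Z}^{\PT,\mathrm{coh}}_{X,\oO_X}(m,Q,q)=\sum_{\beta}P^{\mathrm{coh}}_{1,\beta}(X,\oO_X,\lambda_4)\,\frac{m}{\lambda_4}\sum_{n\geq 1}\frac{1}{n}\frac{(Q^\beta)^n q^n}{(1-q^n)^2},
\]
and I would then recognize the inner sum as $\log M(Q^\beta,q)$, using the identity $M(Q,q)=\Exp\!\big(Qq/(1-q)^2\big)$ recorded before the statement. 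Exponentiating yields the claimed product formula; the required orientations are precisely those for which Conjecture \ref{conj: PT of G non-abelian} holds, and the attached signs are inherited unchanged under the cohomological limit.

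The step I expect to be the main obstacle is the justification of interchanging the $b\to 0$ limit with the plethystic exponential. This is legitimate because, for fixed $(\beta,n)$, the coefficient of $Q^\beta q^n$ in the free energy is a finite $\BQ$-linear combination of invariants whose limits are governed by Theorem \ref{thm: cohom limit}, so the limit may genuinely be taken term-by-term before exponentiating; nonetheless this interchange, together with the degree-zero homogeneity used to kill the $n$-dependence in the second displayed limit, is the part I would write out most carefully.
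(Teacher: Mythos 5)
Your proof is correct and follows the route the paper intends: the corollary is stated there without an explicit proof, as a direct consequence of applying Theorem \ref{thm: cohom limit} to the closed formula of Conjecture \ref{conj: PT of G non-abelian} and using $M(Q,q)=\Exp\!\left(Qq/(1-q)^2\right)$, which is exactly the computation you carry out. The termwise limits you work out --- $[y^{\frac n2}q^n][y^{\frac n2}q^{-n}]\to -q^{-n}(1-q^n)^2$ and the degree-zero homogeneity of $P_{1,\beta}(X,\oO_X,t_4)$ eliminating the $n$-dependence --- are the same manipulations the authors perform for the analogous Corollary \ref{cor: cohom D4} and for the insertion-free limit of $M(Q,q)^{m/\lambda_4}$.
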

All the stable pair invariants $P^{\mathrm{coh}}_{X,1,\beta}(X,\oO_X,\lambda_4)$ are determined by limits of their $K$-theoretic counterparts studied in \S \ref{sec: PT invariants resolution}.

The cohomological limit also applies to the $K$-theoretic DT invariants of the orbifolds $[\BC^4/\BZ_r]$, $[\BC^4/\BZ_2\times \BZ_2]$ studied in Conjectures \ref{conj: closed formula orbifold vertex} and \ref{conj: closed formula orbifold vertex D4}.\footnote{Here it is important that for $\xX = [\BC^4 / G]$ and $G = \BZ_r$, $\BZ_2 \times \BZ_2$, we have $\rk (L^{[R]}) = \mathrm{vd}_{\mathbb{C}} (\Hilb^R(\xX)) = d_0$ for any finite-dimensional representation $R$ of $G$ containing $d_0$ copies of the trivial representation $R_0$ (\S \ref{sec: orbivertexexamples}). This does not hold for all finite abelian groups $G < \mathrm{SU}(4)$.} Consequently, we can derive expressions for the partition functions of their cohomological DT invariants. We set
\[
\widetilde{M}(Q,q):=M(Q,q)\cdot M(Q^{-1},q).
\]
\begin{cor}\label{cor: cohom D4}
Assume Conjecture \ref{conj: closed formula orbifold vertex} holds. Then there exist orientations such that 
\begin{multline*}
\mathcal{Z}^{\DT, \mathrm{coh}}_{[\mathbb{C}^4/\BZ_{r}],\oO}(m,q_0,\ldots,q_{r-1}) =M(1,q_{(r)})^{-\frac{m}{\lambda_4}\left(\frac{r(\lambda_1+\lambda_2)}{\lambda_3}+\frac{(\lambda_1+\lambda_2)(\lambda_1+\lambda_2+\lambda_3)}{r \lambda_1\lambda_2} \right)}\\
\cdot \prod_{0<i\leq j<r}\widetilde{M}(q_{[i,j]},q_{(r)}) ^{-\frac{m}{\lambda_4}\cdot \frac{(\lambda_1+\lambda_2)}{\lambda_3}}.
\end{multline*}
Assume Conjecture \ref{conj: closed formula orbifold vertex D4} holds. Then there exist orientations such that 
\begin{multline*}
\mathcal{Z}^{\DT,\mathrm{coh}}_{[\mathbb{C}^4/\BZ_{2}\times \BZ_2],\oO}(m,q_{00},q_{10}, q_{01}, q_{11}) =\Big(M(1,q_{(2,2)})^{-1+\frac{\lambda_1}{\lambda_2}+\frac{\lambda_2}{\lambda_1}+\frac{\lambda_1}{\lambda_3}+\frac{\lambda_3}{\lambda_1}+\frac{\lambda_2}{\lambda_3}+\frac{\lambda_3}{\lambda_2}} \\  \cdot\widetilde{M}(q_{10},q_{(2,2)})\cdot \widetilde{M}(  q_{01},q_{(2,2)})\cdot \widetilde{M}(q_{11},q_{(2,2)})\cdot \widetilde{M}(q_{10}q_{01}q_{11},q_{(2,2)})\\
\cdot  \widetilde{M}(q_{01}q_{10},q_{(2,2)})^{\frac{\lambda_1+\lambda_2-\lambda_3}{2\lambda_3}}\cdot  \widetilde{M}(q_{10}q_{11},q_{(2,2)})^{\frac{\lambda_1-\lambda_2+\lambda_3}{2\lambda_2}}\cdot  \widetilde{M}(q_{01}q_{11},q_{(2,2)})^{\frac{-\lambda_1+\lambda_2+\lambda_3}{2\lambda_1}}\Big)^{-\frac{m}{\lambda_4}}.
\end{multline*}
\end{cor}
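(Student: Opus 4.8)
The plan is to deduce the cohomological formulae directly from the assumed $K$-theoretic closed forms by taking the rigid limit of Theorem \ref{thm: cohom limit}. First I would record the orbifold analogue of that theorem: since the $\TT$-fixed locus $\Hilb^R([\BC^4/G])^\TT$ is $0$-dimensional and reduced (Lemma \ref{lemma: fixed locus stack}), the Oh-Thomas virtual Riemann-Roch and localization argument used to prove Theorem \ref{thm: cohom limit} applies verbatim, giving
\[
\lim_{b\to0}\left.I_R([\BC^4/G],\oO,y)\right|_{t_i=e^{b\lambda_i},\,y=e^{bm}}=I_R^{\mathrm{coh}}([\BC^4/G],\oO,m).
\]
Here the hypothesis $\rk(L^{[R]})=\mathrm{vd}_{\BC}\Hilb^R([\BC^4/G])=d_0$ (the footnote) is exactly what makes the limit finite, as it balances the numbers of factors in the numerator and denominator of each localized contribution. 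Reading this off coefficient by coefficient in $q$, and interchanging $\lim_{b\to0}$ with the sum over $R$ (locally finite, hence convergent in the $q$-adic topology), yields $\mathcal{Z}^{\DT,\mathrm{coh}}_{\xX,\oO}=\lim_{b\to0}\mathcal{Z}^{\DT}_{\xX,\oO}|_{t_i=e^{b\lambda_i},\,y=e^{bm}}$.

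Next I would substitute the assumed closed forms (Conjectures \ref{conj: closed formula orbifold vertex}, \ref{conj: closed formula orbifold vertex D4}) and commute the limit past $\Exp$, which is legitimate since for each fixed monomial in the $q$-variables only finitely many plethystic terms contribute. The core computation is the behaviour of a single block $\mathcal{F}(a_1,a_2,a_3,t_4,y,q)$ — with $a_1,a_2,a_3$ Laurent monomials in $t$ — under the plethystic substitution $t_i\mapsto e^{nb\lambda_i}$, $y\mapsto e^{nbm}$, $q\mapsto q^n$. Using $[x]=x^{\frac12}-x^{-\frac12}\sim nb\cdot w$ for $x=e^{nbw}$, the three numerator and four denominator $t$-factors together with the insertion $[y]$ scale as $(nb)^{3-4+1}=(nb)^0$; hence the geometric prefactor tends to a constant $c\in\BQ(m,\lambda_1,\dots,\lambda_4)$ that is \emph{independent of the plethystic index} $n$, while the universal factor tends to
\[
\frac{[y]}{[y^{\frac12}q][y^{\frac12}q^{-1}]}\;\longrightarrow\;-\frac{q}{(1-q)^2},
\]
using $(q^{\frac12}-q^{-\frac12})^2=(1-q)^2/q$. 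Summing $\tfrac1n\,c\,(-q^n/(1-q^n)^2)$ over $n$ and exponentiating therefore produces $M(1,q)^{-c}$; for the coloured blocks the extra factor $(q_{[i,j]}+q_{[i,j]}^{-1})$ turns this into $\widetilde{M}(q_{[i,j]},q)^{-c}$, since $\widetilde{M}(Q,q)=\Exp\big((Q+Q^{-1})q/(1-q)^2\big)$.

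It then remains to evaluate the constants as finite sums over the building blocks. For $\BZ_r$, writing $\mu_1^{(k)},\mu_2^{(k)}$ for the weights of $a_1,a_2$ in the $k$-th term of $\mathcal{F}_r$, one finds $a_1a_2=t_1t_2$ and $\mu_1^{(k)}+\mu_2^{(k)}=\lambda_1+\lambda_2$ identically in $k$, so each block contributes $c_k=m\,\frac{(\lambda_1+\lambda_2)(\mu_1^{(k)}+\lambda_3)(\mu_2^{(k)}+\lambda_3)}{\mu_1^{(k)}\mu_2^{(k)}\lambda_3\lambda_4}$. Expanding $(\mu_1+\lambda_3)(\mu_2+\lambda_3)=\mu_1\mu_2+\lambda_3(\lambda_1+\lambda_2+\lambda_3)$ splits $c_k$ into a piece summing to $\tfrac{m\,r(\lambda_1+\lambda_2)}{\lambda_3\lambda_4}$ and a piece $\tfrac{m(\lambda_1+\lambda_2)(\lambda_1+\lambda_2+\lambda_3)}{\lambda_4}\sum_k\tfrac{1}{\mu_1^{(k)}\mu_2^{(k)}}$, where a partial-fraction/telescoping identity gives $\sum_{k=0}^{r-1}1/(\mu_1^{(k)}\mu_2^{(k)})=1/(r\lambda_1\lambda_2)$; together these reproduce the exponent of $M(1,q_{(r)})$, and the coloured sum over $0<i\le j<r$ reproduces the $\widetilde{M}(q_{[i,j]},q_{(r)})$ factors with exponent $-\tfrac{m}{\lambda_4}\tfrac{\lambda_1+\lambda_2}{\lambda_3}$. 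The $\BZ_2\times\BZ_2$ case is identical in structure: the four monomial substitutions in $\mathcal{F}_{2,2}$ sum to the stated exponent of $M(1,q_{(2,2)})$, and the coloured part of Conjecture \ref{conj: closed formula orbifold vertex D4} produces the remaining $\widetilde{M}$ factors. I expect the main obstacle to be the first step's finiteness and commutation claim — verifying that the balanced-rank condition forces every block's limit to be finite and $n$-independent, so that $\lim_{b\to0}$ genuinely passes through $\Exp$ — after which the matching of exponents is the routine partial-fraction bookkeeping just described.
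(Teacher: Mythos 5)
Your proposal is correct and follows essentially the same route the paper intends: the corollary is presented there as a direct consequence of the cohomological limit (Theorem \ref{thm: cohom limit}, extended to the orbifold case via the rank condition $\rk(L^{[R]})=\mathrm{vd}=d_0$ noted in the footnote) applied to the closed formulae of Conjectures \ref{conj: closed formula orbifold vertex} and \ref{conj: closed formula orbifold vertex D4}, using $M(Q,q)=\Exp\bigl(Qq/(1-q)^2\bigr)$. Your elaboration of the limit of each plethystic block and the telescoping identity $\sum_{k}1/(\mu_1^{(k)}\mu_2^{(k)})=1/(r\lambda_1\lambda_2)$ correctly fills in the bookkeeping the paper leaves implicit.
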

 The formulae in Corollary  \ref{cor: cohom D4} were independently found in \cite[Prop.~4.7, Conj.~5.10]{ST2} from the viewpoint of string theory.  
 
 Applying the cohomological limit, we obtain a \emph{crepant resolution correspondence} for cohomological invariants with tautological insertions. 
\begin{cor}
Assume Conjecture \ref{conj: crepant resolution for CY4} holds. Then there exist orientations such that 
\begin{align*}
    \mathcal{Z}^{\DT,\mathrm{coh}}_{\xX,\oO_{\xX}}(m,q_0,\ldots,q_{|G|-1})=\mathcal{Z}^{\DT,\mathrm{coh}}_{X,\oO_X}(m,0,q)\cdot \mathcal{Z}^{\PT,\mathrm{coh}}_{X,\oO_X}(m,Q,q)\cdot \mathcal{Z}^{\PT,\mathrm{coh}}_{X,\oO_X}(m,Q^{-1},q),
\end{align*}
under the change of variables $Q^{\beta_i}=q_i$, $q=q_0\cdots q_r$, where $\beta_i$ is the curve class corresponding to the irreducible $G$-representation $R_i$ by Reid's generalized McKay correspondence \eqref{equ on mckay}.
\end{cor}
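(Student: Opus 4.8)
The plan is to deduce the stated cohomological identity directly from its $K$-theoretic counterpart, Conjecture \ref{conj: crepant resolution for CY4}, by passing to the cohomological limit of Theorem \ref{thm: cohom limit}. The structural point that makes this clean is that the cohomological limit consists solely of the substitution $t_i = e^{b\lambda_i}$, $y = e^{bm}$ followed by $b\to 0$, and therefore acts only on the coefficient ring $K_0^{\TT}(\pt)_{\mathrm{loc}}(y^{1/2})$; it leaves the curve-counting variable $Q$ and the Euler-characteristic variable $q$ untouched. Hence the change of variables $Q^{\beta_i}=q_i$, $q=q_0\cdots q_{|G|-1}$ is a purely formal reindexing of monomials that commutes with the limit a priori.

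First I would substitute $t_i = e^{b\lambda_i}$, $y=e^{bm}$ into both sides of the $K$-theoretic identity; this is legitimate since that identity is an equality of formal series in the $q_i$ whose coefficients lie in $K_0^{\TT}(\pt)_{\mathrm{loc}}(y^{1/2})$, and the substitution merely evaluates those coefficients. I would then fix an arbitrary monomial in the $q_i$ and take $b\to 0$ in its coefficient on each side. On the orbifold side the limit of $I_{R}(\xX,\oO_\xX,y)$ is $I_{R}^{\mathrm{coh}}(\xX,\oO_\xX,m)$: although Theorem \ref{thm: cohom limit} is phrased for Calabi-Yau $4$-folds, its proof runs verbatim on each toric chart $[\BC^4/G]$ because the $\TT$-fixed locus is $0$-dimensional and reduced (Lemma \ref{lemma: fixed locus stack}) and, for $G=\BZ_r$ or $\BZ_2\times\BZ_2$, one has $\rk L^{[R]}=\mathrm{vd}_\BC \Hilb^R(\xX)$, which is exactly the input the argument requires. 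On the resolution side the limits of the individual invariants $I_{n,\beta}$ and $P_{n,\beta}$ are their cohomological versions by Theorem \ref{thm: cohom limit}, the PT fixed loci being proper by Proposition \ref{prop:properfixloc}.

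The step requiring care is the commutation of $\lim_{b\to 0}$ with the product of the three partition functions on the right-hand side. Since the limit is additive and multiplicative only on \emph{finite} expressions, I must check that, after the change of variables, the coefficient of each fixed $q_i$-monomial is a finite sum of products of coefficients drawn from the three factors. This is precisely the well-definedness of the change of variables recorded in the remark following Conjecture \ref{conj: crepant resolution for CY4}: the factor $\mathcal{Z}^{\PT}_{X,\oO_X}(y,Q^{-1},q)$ contributes negative powers of $Q$, and only the fact that $\mathcal{Z}^{\PT}_{X,\oO_X}(y,Q,q)$ is the plethystic exponential of a rational function in $Q$ (which follows from Conjecture \ref{conj: PT of G non-abelian} together with Lemma \ref{lemma: fixed locus stable pairs}) guarantees that, after imposing $Q^{\beta_i}=q_i$ and $q=q_0\cdots q_{|G|-1}$, each $q_i$-monomial receives contributions from only finitely many triples of summation indices. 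Granting this finiteness, the limit distributes over the finite sum and product, yielding exactly $\mathcal{Z}^{\DT,\mathrm{coh}}_{X,\oO_X}(m,0,q)\cdot \mathcal{Z}^{\PT,\mathrm{coh}}_{X,\oO_X}(m,Q,q)\cdot \mathcal{Z}^{\PT,\mathrm{coh}}_{X,\oO_X}(m,Q^{-1},q)$. The only genuine obstacle is therefore this finiteness and commutation issue, which reduces entirely to the rationality input above. Finally, orientations are inherited for free: Theorem \ref{thm: cohom limit} holds for each fixed choice of orientation, so the orientations realizing Conjecture \ref{conj: crepant resolution for CY4} pass through the limit to orientations realizing the cohomological identity.
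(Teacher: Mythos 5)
Your proposal is correct and takes essentially the same route as the paper: the corollary is stated there without an explicit proof, as an immediate application of the cohomological limit (Theorem \ref{thm: cohom limit}, together with the footnoted observation that it applies to the orbifold side since $\rk L^{[R]}=\mathrm{vd}_{\BC}\Hilb^R(\xX)$ for $G=\BZ_r$, $\BZ_2\times\BZ_2$) to the $K$-theoretic identity of Conjecture \ref{conj: crepant resolution for CY4}. Your additional care about commuting the limit with the $Q\mapsto Q^{-1}$ change of variables, resolved via the rationality/plethystic-exponential form noted in the remark following that conjecture, correctly identifies the only point that needs attention.
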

Let $X = Y \times \BC$ and $\xX=\yY\times \BC$, where $Y$ is a toric Calabi-Yau 3-fold and $\yY$ is a toric Calabi-Yau 3-orbifold. 
Setting $m=\lambda_4$, the results of this subsection dimensionally reduce to cohomological DT/PT invariants of $Y$, $\mathcal{Y}$ respectively.
For $\mathcal{Y} = [\BC^3/\BZ_r]$, the resulting formula coincides with an expression of Z.~Zhou \cite[Thm.~5.3]{Zhou}\footnote{Beware of a small typo in \cite[Thm.~5.3]{Zhou}.}, which generalizes \cite[Thm.~1]{MNOP2}. 
For $\mathcal{Y} = [\BC^3/\BZ_2\times \BZ_2]$, an analogous formula can be proved by adapting Zhou's strategy and using relative DT invariants. 
Further specializing the equivariant parameters to the \emph{Calabi-Yau torus} $\lambda_1+\lambda_2+\lambda_3=0$, all the equivariant parameters drop out and the resulting expressions coincide with the ordinary (numerical) DT invariants of $Y$, $\mathcal{Y}$ defined and computed using Behrend's constructible function in \cite[App.~A]{Young}, \cite[Cor.~2.8]{Cal}, \cite[Thm.~1.3]{Toda}.

\subsection{Insertion-free limit}

Let $X$  be a  Calabi-Yau 4-fold. As in \S \ref{sec: cohom limit}, suppose $X$ is projective, or $X$ is quasi-projective with action by an algebraic torus $\TT$ preserving the Calabi-Yau volume form, and such that the $\TT$-fixed loci of the $\DT$ moduli spaces are proper. 
Let $L$ be a $\TT$-equivariant line bundle on $X$.
We define \emph{insertion-free} DT invariants in ($\TT$-equivariant) cohomology
\begin{align*}
\begin{split}
    I_{n,\beta}^{\mathrm{free}}(X)&:=\int_{[\Hilb^n(X, \beta)]^{\vir}}1\in H^*_{\TT}(\pt)_{\mathrm{loc}},\\
\mathcal{Z}^{\DT, \mathrm{free}}_{X}(Q,q)&:=\sum_{\beta,n} I^{\mathrm{free}}_{n,\beta}(X)Q^{\beta}q^n\in  H^*_{\TT}(\pt)_{\mathrm{loc}}(\!(q,Q)\!).
\end{split}
\end{align*}
 Similarly we define PT invariants 
$ P_{n,\beta}^{\mathrm{free}}(X)$, orbifold DT invariants $I^{\mathrm{free}}_{\alpha}(\xX)$ for a Calabi-Yau 4-orbifold $\xX$ and class $\alpha\in F_0K(\xX)$, and their generating series $ \mathcal{Z}^{\PT, \mathrm{free}}_{X}(Q,q)$, $\mathcal{Z}^{\DT, \mathrm{free}}_{\xX}(q)$. 

The (complex) virtual dimension of $\Hilb^n(X, \beta)$, $P_n(X,\beta)$ is $n$ \cite[\S 0.5]{CKM}, \cite[\S 6]{CT1}, \cite[Prop.~2.6~(2)]{CT4}.
Hence we focus on the case where $\TT$ acts non-trivially.
We showed in \cite[Thm.~0.15]{CKM} that for a toric Calabi-Yau 4-fold $X$, we have
\[
\lim_{m\to \infty}\mathcal{Z}^{\DT, \mathrm{coh}}_{X,\oO_X}(m,Q,q)|_{\tilde{q}=qm}=\mathcal{Z}^{\DT, \mathrm{free}}_{X}(Q,\tilde{q}).
\]
An analogous statement was derived for PT invariants in \cite{CKM}, where we assume that the $(\BC^*)^4$-fixed loci of the PT moduli spaces (for fixed $X,\beta$ and all $n$) is 0-dimensional. This assumption ensures that the $(\BC^*)^4$-fixed loci coincide with the $\TT$-fixed loci and are 0-dimensional reduced \cite[Prop.~2.6]{CK2}.
 We generalize this insertion-free limit to the non-toric setting.
\begin{thm}\label{thm: insertion-free limit}
Let $X$ be a Calabi-Yau 4-fold. Let $\TT$ be a (possibly trivial) algebraic torus acting on $X$ preserving the Calabi-Yau volume form. Let $L$ be a $\TT$-equivariant line bundle on $X$. If $\Hilb^n(X, \beta)^\TT$ is proper, then we have 
    \begin{align*}
    \lim_{m\to \infty}I_{n,\beta}^{\mathrm{coh}}(X,L,m)\cdot q^n|_{\tilde{q}=qm}&=I_{n,\beta}^{\mathrm{free}}(X)\cdot \tilde{q}^n.
\end{align*}
If $P_n(X, \beta)^\TT$ is proper, then we have 
    \begin{align*}
     \lim_{m\to \infty}P_{n,\beta}^{\mathrm{coh}}(X,L,m)\cdot q^n|_{\tilde{q}=qm}&=P_{n,\beta}^{\mathrm{free}}(X)\cdot \tilde{q}^n.
\end{align*}
\end{thm}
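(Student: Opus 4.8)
The plan is to reduce both statements to the numerical coincidence $\rk L^{[n]} = n = \mathrm{vd}_\BC \Hilb^n(X,\beta)$ already recorded in the proof of Theorem \ref{thm: cohom limit}, and then to isolate the leading behaviour in $m$ of the cohomological invariant. I would treat the Hilbert scheme case in detail, the stable pairs case being word-for-word identical after replacing $I := \Hilb^n(X,\beta)$ by $P_n(X,\beta)$. The essential observation is that, after the substitution $\tilde q = qm$, the factor $q^n$ becomes $\tilde q^n/m^{n}$, and $m^n$ is exactly the leading power of $m$ carried by $e((L^{[n]})^\vee \otimes y)$ because $\rk L^{[n]} = n$; dividing by it and sending $m \to \infty$ kills every insertion and leaves the bare virtual class.

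First I would set up the localization formula. Since the extra $\BC^*$ with character $y$ acts trivially on $X$, hence on $I$, the $\TT$-fixed locus is unchanged and the square-root Euler class $\sqrt{e}(N^{\vir})$ is $m$-independent. By the Oh--Thomas localization formula,
\[
I_{n,\beta}^{\mathrm{coh}}(X,L,m) = \sum_\alpha \int_{[I_\alpha]^{\vir}} \frac{e\big((L^{[n]})^\vee|_{I_\alpha}\otimes y\big)}{\sqrt{e}(N^{\vir})},
\]
where $\{I_\alpha\}$ are the connected components of $I^\TT$, finite in number because $I^\TT$ is proper of finite type. Next I would analyse the numerator on a fixed component. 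Writing $(L^{[n]})^\vee = E_0 - E_1$ with $\rk E_0 - \rk E_1 = \rk L^{[n]} = n$, and letting $\{a_i\}$, $\{b_j\}$ be the $\TT$-equivariant Chern roots of $E_0|_{I_\alpha}$, $E_1|_{I_\alpha}$, one gets
\[
\frac{1}{m^n}\, e\big((L^{[n]})^\vee|_{I_\alpha}\otimes y\big) = \frac{\prod_i (1 + a_i/m)}{\prod_j (1 + b_j/m)}.
\]
On $I_\alpha$ the $\TT$-action is trivial, so each Chern root splits as a $\TT$-weight plus a class in $H^{>0}(I_\alpha)$, and the right-hand side is a well-defined element of $H^*_\TT(I_\alpha)_{\mathrm{loc}}$ that is regular at $m=\infty$ with value $1$ at $1/m=0$.

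Finally I would pass to the limit termwise over the finite index set. Dividing the localization formula by $m^n$ and using the regularity just noted,
\[
\lim_{m\to\infty}\frac{1}{m^n}\int_{[I_\alpha]^{\vir}}\frac{e\big((L^{[n]})^\vee|_{I_\alpha}\otimes y\big)}{\sqrt{e}(N^{\vir})} = \int_{[I_\alpha]^{\vir}}\frac{1}{\sqrt{e}(N^{\vir})},
\]
and summing over $\alpha$ gives $\lim_{m\to\infty} m^{-n} I_{n,\beta}^{\mathrm{coh}}(X,L,m) = I_{n,\beta}^{\mathrm{free}}(X)$ by the definition of the insertion-free invariant. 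Multiplying by $\tilde q^n$ and recalling $q^n|_{\tilde q=qm} = \tilde q^n/m^n$ yields the claim; the PT statement follows verbatim, again invoking $\rk L^{[n]} = n$.

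The hard part will be the justification that the $m\to\infty$ limit commutes with the localized equivariant integration, i.e.\ that $m^{-n} e((L^{[n]})^\vee\otimes y)$ restricts on each proper fixed component to a class regular at $m=\infty$ with value $1$. This is precisely where $\rk L^{[n]} = n$ is indispensable: a larger rank would leave an uncancelled factor $m^{\rk L^{[n]} - n}$ and force the normalized limit to diverge, while a smaller one would force it to vanish. Everything else is a formal rearrangement of the Oh--Thomas formula, so I expect no further essential difficulty beyond bookkeeping the splitting of Chern roots into weight and nilpotent parts on the fixed loci.
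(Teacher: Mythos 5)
Your proposal is correct and follows essentially the same route as the paper's proof: localize to the finite set of connected components of the proper fixed locus, factor out $m^n$ from $e((L^{[n]})^\vee\otimes y)$ using $\rk L^{[n]}=n$ (your Chern-root identity is exactly the paper's $e((L^{[n]})^\vee\otimes y)=m^n\, c_{m^{-1}}((L^{[n]})^\vee)$), and pass to the limit termwise. No substantive difference.
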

\begin{proof}
    Let $I_n=\Hilb^n(X, \beta)$ and decompose its $\TT$-fixed locus into connected components $I_n^\TT=\bigsqcup_\alpha I_{n,\alpha}$. By definition of the invariants we have
    \begin{align*}
        I_{n,\beta}^{\mathrm{coh}}(X,L,m)=\sum_{\alpha}\int_{[I_{n,\alpha}]^{\vir}}\frac{e\left((L^{[n]})^\vee \otimes y\right)}{\sqrt{e}(N^{\vir})},
    \end{align*}
    where $N^{\vir}$ denotes the virtual normal bundle of $I_{n,\alpha}\subset I_{n}$. Recall that $m=c_1^{\mathbb{C}^*}(y)$ (with respect to the trivial $\BC^*$ action on $I_n$) and $\rk(L^{[n]})=n$. A simple identity of Chern classes yields 
        \[
e\left((L^{[n]})^\vee \otimes y\right)=m^{n}\cdot c_{m^{-1}}\left((L^{[n]})^\vee\right),
    \]
    where $c_{m^{-1}}(x) := 1+c_1(x) \cdot m^{-1} + c_2(x) \cdot m^{-2} + \cdots$ is the total  Chern class weighted by $m^{-1}$. Applying the limit we have
    \begin{align*}
 \lim_{m\to \infty}\frac{\tilde{q}^n}{m^n}\int_{[I_{n,\alpha}]^{\vir}}\frac{e\left((L^{[n]})^\vee\otimes y\right)}{\sqrt{e}(N^{\vir})}&=\lim_{m\to \infty}\tilde{q}^n\int_{[I_{n,\alpha}]^{\vir}}\frac{c_{m^{-1}}\left((L^{[n]})^\vee\right)}{\sqrt{e}(N^{\vir})}\\
&=\tilde{q}^n\int_{[I_{n,\alpha}]^{\vir}}\frac{1}{\sqrt{e}(N^{\vir})},
    \end{align*}
    which concludes the proof by summing over all the connected components $I_{n,\alpha}$. The proof for the stable pairs case is completely analogous.
\end{proof}

Using the plethystic expression of the refined MacMahon function, we can compute its insertion-free limit
\begin{align*}
    \lim_{m\to \infty}M(Q, q)^{\frac{m}{\lambda_4}}|_{\tilde{q}=mq}&= \lim_{m\to \infty}\exp\left(\sum_{n\geq 1}\frac{1}{n}\frac{\tilde{q}^{n} Q^n}{\lambda_4 m^{n-1}(1-\frac{\tilde{q}^n}{m^{n}})^{2}} \right)\\
    &=e^{\frac{\tilde{q}Q}{\lambda_4} }.
\end{align*}
By Theorem \ref{thm: insertion-free limit}, we obtain closed formulae for the partition functions of insertion-free DT and PT invariants.
\begin{cor}
Assume Conjecture \ref{conj: PT of G non-abelian} holds. Then there exist orientations such that
\[
\mathcal{Z}^{\PT,\mathrm{free}}_{X}(Q,q)= \prod_{\beta\in H_2(X,\mathbb{Z})}e^{P^{\mathrm{free}}_{1,\beta}(X)\cdot\frac{qQ}{\lambda_4}}.
\]
\end{cor}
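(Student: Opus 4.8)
The plan is to derive this closed formula as the insertion-free specialization of the cohomological partition function obtained in the preceding corollary. Assuming Conjecture \ref{conj: PT of G non-abelian}, that corollary gives
\[
\mathcal{Z}^{\PT,\mathrm{coh}}_{X, \oO_X}(m,Q,q)= \prod_{\beta\in H_2(X,\mathbb{Z})}M(Q^{\beta},q)^{P^{\mathrm{coh}}_{1,\beta}(X,\oO_X,\lambda_4)\cdot\frac{m}{\lambda_4}},
\]
and I would feed this into the $m\to\infty$, $\tilde q = qm$ limit governed by Theorem \ref{thm: insertion-free limit}.

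First I would promote Theorem \ref{thm: insertion-free limit} from individual invariants to the generating series: writing $\mathcal{Z}^{\PT,\mathrm{coh}}_{X,\oO_X}(m,Q,q)|_{\tilde q = qm} = 1+\sum_{\beta,n}P^{\mathrm{coh}}_{n,\beta}(X,\oO_X,m)(\tilde q/m)^nQ^\beta$ and applying the theorem to each $(\beta,n)$-coefficient yields $\lim_{m\to\infty}\mathcal{Z}^{\PT,\mathrm{coh}}_{X,\oO_X}(m,Q,q)|_{\tilde q=qm} = \mathcal{Z}^{\PT,\mathrm{free}}_X(Q,\tilde q)$, so the left-hand side of the corollary equals this limit. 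Second, I would compute the same limit directly from the product formula: since $P^{\mathrm{coh}}_{1,\beta}(X,\oO_X,\lambda_4)$ is independent of the limit variable $m$, the already-recorded asymptotics $\lim_{m\to\infty}M(Q,q)^{m/\lambda_4}|_{\tilde q=mq}=e^{\tilde q Q/\lambda_4}$ applies factor by factor (the plethystic terms of weight $k\geq 2$ carry $m^{1-k}$ and drop out), producing $\prod_\beta e^{P^{\mathrm{coh}}_{1,\beta}(X,\oO_X,\lambda_4)\,\tilde q Q^\beta/\lambda_4}$. Third, I would identify the exponent with the $P^{\mathrm{free}}_{1,\beta}(X)$-coefficient appearing in the statement using the $n=1$ case of Theorem \ref{thm: insertion-free limit}, or equivalently a direct Oh-Thomas localization at the unique fixed point $I_C$ of Lemma \ref{lemma: fixed locus stable pairs}: because $\oO_X^{[1]}|_{I_C}\cong H^0(\oO_C)=\BC$ carries the trivial $\TT$-weight (Lemma \ref{lem:compareExts} and \eqref{eqn:tautrestr}), its $y$-twisted equivariant Euler class is exactly $m$, forcing $P^{\mathrm{coh}}_{1,\beta}(X,\oO_X,m)$ to be exactly linear in $m$ and proportional to $P^{\mathrm{free}}_{1,\beta}(X)=\int_{[P_1(X,\beta)]^{\vir}}1$. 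Substituting this into the product gives the stated exponential.

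All the individual computations are elementary; the one point I would treat carefully is the interchange of the $m\to\infty$ limit with the infinite product over curve classes and with the expansion in $q$ and $Q$. This is legitimate because both sides live in the power series ring $H^*_\TT(\pt)_{\mathrm{loc}}(\!(q,Q)\!)$ and the limit is taken coefficient-wise, where Theorem \ref{thm: insertion-free limit} already guarantees convergence for each fixed $(\beta,n)$. Thus the only genuine input is that theorem, and everything else reduces to the MacMahon asymptotics displayed above, so I do not expect a real obstacle.
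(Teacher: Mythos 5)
Your proposal is correct and follows exactly the route the paper takes (which it leaves almost entirely implicit): substitute the product formula from the cohomological corollary, apply the displayed asymptotics $\lim_{m\to\infty}M(Q,q)^{m/\lambda_4}|_{\tilde q=mq}=e^{\tilde qQ/\lambda_4}$ factor by factor, and invoke Theorem \ref{thm: insertion-free limit} to identify the resulting limit with $\mathcal{Z}^{\PT,\mathrm{free}}_X$. Your explicit observation that $P^{\mathrm{coh}}_{1,\beta}(X,\oO_X,m)$ is exactly linear in $m$ because $\oO_X^{[1]}|_{I_C}$ carries trivial $\TT$-weight is in fact more detail than the paper records for the identification of the exponent with $P^{\mathrm{free}}_{1,\beta}(X)$.
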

The stable pair invariants $ P^{\mathrm{free}}_{1,\beta}(X)$ can be easily obtained as limits of their cohomological counterparts determined in \S \ref{sec: PT invariants resolution}, \ref{sec: cohom limit}.

The insertion-free limit also applies to the $K$-theoretic DT invariants of the orbifolds $[\BC^4/\BZ_r]$, $[\BC^4/\BZ_2\times \BZ_2]$ studied in Conjectures \ref{conj: closed formula orbifold vertex} and \ref{conj: closed formula orbifold vertex D4}. Hence, we can derive expressions for the partition functions of their insertion-free DT invariants.
Recall that $\rk (L^{[R]}) = d_0$ for any finite-dimensional representation $R$ containing $d_0$ copies of the trivial representation $R_0$.
By setting $\tilde{q}_0=q_0m$ in the case of $G = \BZ_r$ (and $\tilde{q}_{00}=q_{00}m$ in the case of $G = \BZ_2 \times \BZ_2$) and sending $m\to \infty$, we obtain: 
\begin{cor}\label{cor: pure theory orbifold}
Assume Conjecture \ref{conj: closed formula orbifold vertex} holds. Then there exist orientations such that 
\begin{multline*}
\mathcal{Z}^{\DT, \mathrm{free}}_{[\mathbb{C}^4/\BZ_{r}]}(q_0,\ldots,q_{r-1}) =e^{-\frac{q_{(r)}}{\lambda_4}\left(\frac{r(\lambda_1+\lambda_2)}{\lambda_3}+\frac{(\lambda_1+\lambda_2)(\lambda_1+\lambda_2+\lambda_3)}{r \lambda_1\lambda_2} \right)}\\
\cdot \prod_{0<i\leq j<r} e^{-(q_{[i,j]}+q_{[i,j]}^{-1})\frac{q_{(r)}}{\lambda_4}\cdot \frac{(\lambda_1+\lambda_2)}{\lambda_3}}.
\end{multline*}
Assume Conjecture \ref{conj: closed formula orbifold vertex D4} holds. Then there exist orientations such that 
\begin{multline*}
\mathcal{Z}^{\DT,\mathrm{free}}_{[\mathbb{C}^4/\BZ_{2}\times \BZ_2]}(q_{00},q_{10}, q_{01}, q_{11}) =e^{q_{(2,2)}(-1+\frac{\lambda_1}{\lambda_2}+\frac{\lambda_2}{\lambda_1}+\frac{\lambda_1}{\lambda_3}+\frac{\lambda_3}{\lambda_1}+\frac{\lambda_2}{\lambda_3}+\frac{\lambda_3}{\lambda_2})} \\  \cdot e^{q_{(2,2)}(q_{10}+q_{10}^{-1}+q_{01}+q_{01}^{-1}+q_{11}+q_{11}^{-1}+q_{10}q_{01}q_{11}+q_{10}^{-1}q_{01}^{-1}q_{11}^{-1})}\\ \cdot e^{q_{(2,2)}\left((q_{01}q_{10} + q_{01}^{-1}q_{10}^{-1})\frac{\lambda_1+\lambda_2-\lambda_3}{2\lambda_3}+(q_{10}q_{11}+q_{10}^{-1}q_{11}^{-1})\frac{\lambda_1-\lambda_2+\lambda_3}{2\lambda_2}+ (q_{01}q_{11}+q_{01}^{-1}q_{11}^{-1})\frac{-\lambda_1+\lambda_2+\lambda_3}{2\lambda_1}\right)}.
\end{multline*}
\end{cor}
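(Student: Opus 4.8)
The plan is to deduce the insertion-free formulae directly from the cohomological partition functions of Corollary \ref{cor: cohom D4} by applying the orbifold analogue of the insertion-free limit of Theorem \ref{thm: insertion-free limit}. The crucial structural input, recorded in \S\ref{sec: orbivertexexamples}, is that for $\xX = [\BC^4/G]$ with $G = \BZ_r$ or $\BZ_2\times\BZ_2$ one has $\rk(L^{[R]}) = \mathrm{vd}_{\BC}(\Hilb^R(\xX)) = d_0$, the number of copies of the trivial representation $R_0$ appearing in $R$. First I would check that this equality is exactly what makes the argument in the proof of Theorem \ref{thm: insertion-free limit} carry over: writing $n = \rk(L^{[R]}) = d_0$ and using $e((L^{[R]})^\vee\otimes y) = m^{d_0}\, c_{m^{-1}}((L^{[R]})^\vee)$, the rescaling $\tilde{q}_0 = q_0 m$ of the \emph{single} variable tracking $d_0$ produces $q_0^{d_0} I_R^{\mathrm{coh}} = \tilde{q}_0^{d_0}\int_{[\Hilb^R(\xX)]^{\vir}} c_{m^{-1}}((L^{[R]})^\vee)/\sqrt{e}(N^{\vir})$, whose $m\to\infty$ limit is $\tilde{q}_0^{d_0} I_R^{\mathrm{free}}$.

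At the level of generating series this shows that the insertion-free partition functions are obtained from those of Corollary \ref{cor: cohom D4} by the substitution $\tilde{q}_0 = q_0 m$ followed by $m\to\infty$ (renaming $\tilde{q}_0$ back to $q_0$ afterwards). Since the MacMahon variable in every factor of Corollary \ref{cor: cohom D4} is $q_{(r)} = q_0\cdots q_{r-1}$ (resp.\ $q_{(2,2)}$), this substitution rescales it as $\tilde{q}_{(r)} = m\, q_{(r)}$, which is precisely the substitution $\tilde{q} = mq$ appearing in the limit computed just before the statement,
\[
\lim_{m\to\infty} M(Q,q)^{\frac{m}{\lambda_4}}\big|_{\tilde{q}=mq} = e^{\frac{\tilde{q}Q}{\lambda_4}}.
\]
Applying this factorwise, together with $\widetilde{M}(Q,q) = M(Q,q)M(Q^{-1},q)$ and hence $\lim_{m\to\infty}\widetilde{M}(Q,q)^{m/\lambda_4}\big|_{\tilde{q}=mq} = e^{(Q+Q^{-1})\tilde{q}/\lambda_4}$, converts each exponent $-\tfrac{m}{\lambda_4}A$ (with $A$ a rational function of $\lambda_1,\dots,\lambda_4$ independent of $m$) into the exponential $e^{-\frac{q_{(r)}}{\lambda_4}A}$ (resp.\ with $q_{(2,2)}$). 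Matching each term of Corollary \ref{cor: cohom D4} then yields both claimed formulae.

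The main obstacle is justifying the orbifold insertion-free limit and, in particular, the fact that only the single variable $q_0$ (rather than all of $q_0,\dots,q_{r-1}$) should be rescaled by $m$. This is forced by the identity $\rk(L^{[R]}) = d_0$: the power of $m$ produced by $e((L^{[R]})^\vee\otimes y)$ equals $d_0 = |\pi|_{R_0}$, which is tracked precisely by $q_0$, so the scaling $q_0 \mapsto q_0 m$ exactly cancels the factor $m^{d_0}$ after the integral is normalized. For a general finite abelian $G < \mathrm{SU}(4)$ the equality $\rk(L^{[R]}) = \mathrm{vd}$ fails, and no such single-variable rescaling yields a finite limit; it is therefore essential that we restrict to $\BZ_r$ and $\BZ_2\times\BZ_2$, as emphasized in the footnote to Corollary \ref{cor: cohom D4}. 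Granting this, the remaining steps are the elementary factorwise evaluations above, requiring no further geometric input.
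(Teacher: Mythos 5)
Your proposal is correct and follows exactly the route the paper takes: the corollary is obtained from Corollary \ref{cor: cohom D4} by the orbifold version of the insertion-free limit of Theorem \ref{thm: insertion-free limit}, rescaling only $\tilde{q}_0 = q_0 m$ (resp.\ $\tilde{q}_{00}=q_{00}m$) because $\rk(L^{[R]}) = d_0$, and then applying the limit $\lim_{m\to\infty} M(Q,q)^{m/\lambda_4}|_{\tilde q = mq} = e^{\tilde q Q/\lambda_4}$ factorwise. You correctly identify the identity $\rk(L^{[R]})=d_0$ as the key structural input singled out in the paper's footnote, so nothing further is needed.
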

The formulae in Corollary  \ref{cor: pure theory orbifold} independently appeared in \cite[Prop. 4.26, Prop. 5.20]{ST2} as the partition functions of the pure gauge theories on the quotient stacks.

Applying the insertion-free limit we obtain a \emph{crepant resolution correspondence} for insertion free invariants. 
\begin{cor}
Assume Conjecture \ref{conj: crepant resolution for CY4} holds. Then there exist orientations such that 
\begin{align*}
    \mathcal{Z}^{\DT,\mathrm{free}}_{\xX,\oO_{\xX}}(q_0,\ldots,q_{|G|-1})=\mathcal{Z}^{\DT,\mathrm{free}}_{X,\oO_X}(0,q)\cdot \mathcal{Z}^{\PT,\mathrm{free}}_{X,\oO_X}(Q,q)\cdot \mathcal{Z}^{\PT,\mathrm{free}}_{X,\oO_X}(Q^{-1},q),
\end{align*}
under the change of variables $Q^{\beta_i}=q_i$, $q=q_0\cdots q_r$, where $\beta_i$ is the curve class corresponding to the irreducible $G$-representation $R_i$ by Reid's generalized McKay correspondence \eqref{equ on mckay}.
\end{cor}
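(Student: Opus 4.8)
The plan is to obtain the identity by applying the insertion-free limit of Theorem~\ref{thm: insertion-free limit} to the cohomological crepant resolution correspondence established above, which itself follows from Conjecture~\ref{conj: crepant resolution for CY4} together with the cohomological limit of Theorem~\ref{thm: cohom limit}. Concretely, I would start from
\[
\mathcal{Z}^{\DT,\mathrm{coh}}_{\xX,\oO_{\xX}}(m,q_0,\ldots,q_{|G|-1})=\mathcal{Z}^{\DT,\mathrm{coh}}_{X,\oO_X}(m,0,q)\cdot \mathcal{Z}^{\PT,\mathrm{coh}}_{X,\oO_X}(m,Q,q)\cdot \mathcal{Z}^{\PT,\mathrm{coh}}_{X,\oO_X}(m,Q^{-1},q)
\]
under $Q^{\beta_i}=q_i$ and $q=q_0\cdots q_{|G|-1}$, substitute $q_0\mapsto q_0 m$ on the orbifold side, and let $m\to\infty$.

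First I would verify the compatibility of this scaling with the change of variables. Since $q=q_0\cdots q_{|G|-1}$ and the variables $q_i=Q^{\beta_i}$ for $i\geq 1$ are held fixed, the substitution $q_0\mapsto q_0 m$ is exactly the substitution $q\mapsto qm$ on the resolution side, with $Q$ untouched. This is precisely the scaling demanded by Theorem~\ref{thm: insertion-free limit}: on the orbifold side one has $\rk L^{[R]}=d_0$, the number of copies of the trivial representation in $R$, which is tracked by $q_0$ (exactly as in Corollary~\ref{cor: pure theory orbifold}); on the resolution side the tautological complex $\oO_X^{[n]}$ has rank $n$, tracked by $q$, for both the DT and the PT moduli spaces. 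Thus a single parameter governs both sides of the correspondence.

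Next I would take the limit factor by factor. By Theorem~\ref{thm: insertion-free limit}, together with its orbifold counterpart as used in Corollary~\ref{cor: pure theory orbifold}, each cohomological partition function — rescaled by the relevant power of $m$ and evaluated at the homogenized variable — converges as $m\to\infty$ to its insertion-free analogue: the left-hand side tends to $\mathcal{Z}^{\DT,\mathrm{free}}_{\xX,\oO_{\xX}}$, while the three factors on the right tend to $\mathcal{Z}^{\DT,\mathrm{free}}_{X,\oO_X}(0,q)$, $\mathcal{Z}^{\PT,\mathrm{free}}_{X,\oO_X}(Q,q)$, and $\mathcal{Z}^{\PT,\mathrm{free}}_{X,\oO_X}(Q^{-1},q)$ respectively.

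The step I expect to require the most care is justifying that the limit commutes with the triple product. Since all three factors are scaled in the single variable $q\mapsto qm$, I would argue coefficient by coefficient in $q$: the coefficient of $q^N$ in the product is the finite sum over $n_1+n_2+n_3=N$ of products of the three factors' coefficients, and the overall weight $m^{-N}$ distributes as $m^{-n_1}m^{-n_2}m^{-n_3}$, so the limit of this finite sum is the sum of the products of the individual limits. Hence the limit of the product equals the product of the limits, giving the asserted equality. The inversion $Q\mapsto Q^{-1}$ in the third factor causes no trouble here, because the limit is taken in the independent equivariant parameter $m$ rather than in $Q$ or $q$; the resummation in the curve-counting variables is already encoded in the cohomological correspondence from which we start.
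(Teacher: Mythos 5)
Your proposal is correct and follows exactly the route the paper intends (the paper gives no written proof beyond ``applying the insertion-free limit''): pass from the $K$-theoretic correspondence of Conjecture \ref{conj: crepant resolution for CY4} to its cohomological form via Theorem \ref{thm: cohom limit}, then apply Theorem \ref{thm: insertion-free limit} with the scaling $q_0\mapsto q_0m$, i.e.\ $q\mapsto qm$, matching $\rk L^{[R]}=d_0$ on the orbifold side with $\rk \oO_X^{[n]}=n$ on the resolution side. Your coefficient-by-coefficient justification that the limit commutes with the finite triple product is a correct filling-in of the detail the paper leaves implicit.
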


\end{document}